\newtheorem{theorem}{Theorem}[section] 
\newtheorem{lemma}[theorem]{Lemma}     
\newtheorem{corollary}[theorem]{Corollary}
\newtheorem{proposition}[theorem]{Proposition}
\newtheorem{remark}[theorem]{Remark}
\newtheorem{definition}[theorem]{Definition}
\newtheorem{conjecture}[theorem]{Conjecture}
\numberwithin{equation}{section}
\newcommand{\p}{\mathfrak{p}}
\newcommand{\cO}{\mathcal{O}}
\newcommand{\cS}{\mathcal{S}}
\newcommand{\cI}{\mathcal{I}}
\newcommand{\F}{\mathbb F}
\newcommand{\Q}{\mathbb Q}
\newcommand{\Z}{\mathbb Z}
\newcommand{\rsoc}{\mathrm{soc}}
\newcommand{\Fil}{\mathrm{Fil}}
\newcommand{\ra}{\rightarrow}
\newcommand{\GL}{\mathrm{GL}}
\newcommand{\SL}{\mathrm{SL}}
\newcommand{\bFp}{\overline{\F}_p}
\newcommand{\bQp}{\overline{\Q}_p}
\newcommand{\Sym}{\mathrm{Sym}}
\newcommand{\brho}{\overline{\rho}}
\newcommand{\xto}[1][]{\xrightarrow{#1}}
\newcommand{\simto}{
\xto[\sim]} 
\newcommand{\matr}[4]{\begin{pmatrix}{#1}&{#2}\\ {#3}&{#4}\end{pmatrix}}
\newcommand{\smatr}[4]{\bigl(\begin{smallmatrix} {#1}& {#2}\\ {#3}&{#4}\end{smallmatrix}\bigl)}
\def\QM{{\mathbb{Q}}}
\def\OC{{\mathcal{O}}}
\def\FM{{\mathbb{F}}}
\def\AM{{\mathbb{A}}}
\def\TM{{\mathbb{T}}}
\def\Fov{{\overline{F}}}
\def\into{\hookrightarrow}
 \def\CM{{\mathbb{C}}}
\def\RM{{\mathbb{R}}}
 \def\FC{{\mathcal{F}}}
\DeclareMathOperator{\Art}{{\mathrm{Art}}}
\DeclareMathOperator{\End}{{\mathrm{End}}}
\DeclareMathOperator{\Ext}{{\mathrm{Ext}}}
\DeclareMathOperator{\Fr}{{\mathrm{Fr}}}
\DeclareMathOperator{\Frob}{{\mathrm{Frob}}}
\DeclareMathOperator{\Gal}{{\mathrm{Gal}}}
\DeclareMathOperator{\Hom}{{\mathrm{Hom}}}
\DeclareMathOperator{\im}{{\mathrm{Im}}}
\DeclareMathOperator{\Ind}{{\mathrm{Ind}}}
\DeclareMathOperator{\JH}{{\mathrm{JH}}}
\DeclareMathOperator{\loc}{{\mathrm{loc}}}
\DeclareMathOperator{\Rep}{{\mathrm{Rep}}}
\DeclareMathOperator{\soc}{{\mathrm{soc}}}
\DeclareMathOperator{\tr}{{\mathrm{tr}}}
\def\G{\Gamma}
\def\s{\sigma}
\def\th{\theta}
\begin{document}

\title[]
{Multiplicity one for the mod $p$ cohomology of Shimura curves: the tame case }

\author{Yongquan HU \and Haoran WANG }
\date{}

\maketitle

\begin{abstract}
 Let $F$ be a totally real field, $\mathfrak{p}$ an unramified place of
$F$ dividing $p$ and $\overline{r}: \Gal(\overline{F}/F)\ra\GL_2(\bFp)$ a continuous irreducible modular representation. The work of Buzzard, Diamond and Jarvis \cite{BDJ} associates to $\overline{r}$ an admissible smooth representation of $\GL_2(F_\mathfrak{p})$  on the mod $p$ cohomology of Shimura curves attached to indefinite division algebras which split at $\mathfrak{p}$.
When $\overline{r}|_{\Gal(\overline{F_\mathfrak{p}}/F_\mathfrak{p})}$ is tamely ramified and generic (and under some additional technical assumptions), we determine the subspace of invariants of this representation under the principal congruence subgroup of level $\mathfrak{p}$. In particular, the subspace depends only on $\overline{r}|_{\Gal(\overline{F_\mathfrak{p}}/F_\mathfrak{p})}$ and satisfies a multiplicity one property.

\end{abstract}
\tableofcontents

\section{Introduction}

Let $p$ be a prime number. Barthel-Livn\'e \cite{BL} and Breuil \cite{Br03} gave a complete classification of irreducible   smooth representations of $\GL_2(\QM_p)$ over $\bFp$ with a central character,   which allowed Breuil to define a semi-simple mod $p$ local Langlands correspondence for $\GL_2(\QM_p)$ \cite{Br03}. The situation is much more complicated if one wants to establish a similar mod $p$ correspondence for $\GL_2(L)$ where $L\neq \QM_p$ is a finite extension of $\QM_p.$ There is no such complete classification, and the study of irreducible admissible smooth $\bFp$-representations of $\GL_2(L)$ becomes very subtle (\cite{BP}, \cite{Hu12}, \cite{Sch}).

Motivated by the local-global compatibility result of Emerton \cite{Em3} for the cohomology of modular curves, it seems to be very promising to seek for the hypothetical correspondence for $\GL_2(L)$ in the cohomology of Shimura curves.

More precisely, let $F$ be a totally real field, $F_v$ be the completion of $F$ at a fixed place $v$ of $F$ above $p$ with ring of integers $\cO_{F_v}$ and residue field $k_{F_v}.$ We consider the mod $p$ \'etale cohomology of a tower of Shimura curves $(X_U)_U$ over $F$ associated to an indefinite quaternion algebra $D$ with center $F$ which splits at all places over $p$ and at exactly one infinite place. Let $S^D(\bFp)$ denote the space
$$\varinjlim_U H^1_{\textrm{\'et}}(X_{U,\overline{F}},\bFp).$$
It is expected that one can get information on the hypothetical mod $p$ local Langlands correspondence by studying the action of $\GL_2(F_v)$ on $S^D(\bFp).$ For instance, assuming $F_v/\QM_p$ unramified, the $\GL_2(\OC_{F_v})$-socles of the irreducible $\GL_2(F_v)$ subrepresentations of  $S^D(\bFp)$ are described by the Buzzard-Diamond-Jarvis conjecture (\cite{BDJ}, proved in \cite{GLS}). Motivated by this, for a generic Galois representation $\brho:\Gal(\overline{F}_v/F_v)\to \GL_2(\bFp),$  Breuil and Pa\v{s}k\=unas \cite{BP} construct by local method an infinite family of smooth admissible $\bFp$-representations of $\GL_2(F_v)$ whose $\GL_2(\OC_{F_v})$-socles are as predicted by \cite{BDJ}. Furthermore, they conjecture that if $\overline{r}$ is a globlisation of $\brho$ to a modular Galois representation of $\Gal(\overline{F}/F),$ then the $\overline{r}$-isotypic part of $S^D(\bFp)$ contains one of these representations. This conjecture is recently proved by Emerton, Gee and Savitt in \cite{EGS} under mild Taylor-Wiles type hypothesis.

In this paper, we give further constraint on these $\GL_2(F_v)$-representations in the case where $\brho$ is a tamely ramified (that is either split or irreducible) and generic representation. To state our main theorem, we introduce some notations. Let $\overline{r}:\Gal(\overline{F}/F)\to \GL_2(\bFp)$ be a globlisation of $\brho,$ which is continuous irreducible and totally odd. We assume that $\overline{r}$ is modular in the sense that
\[
\pi^D(\overline{r}):=\Hom_{\bFp[\Gal(\overline{F}/F)]}(\overline{r},S^D(\bFp))\neq 0.
\]
We can use the action of Hecke operators away from $v$ on $\pi^D(\overline{r})$ to define a local factor $\pi_v^D(\overline{r})$ at $v,$ which is an admissible smooth representation of $\GL_2(F_v),$ and is supposed to be the right candidate in the mod $p$ local Langlands correspondence, and many important properties about it have been established, see e.g. \cite{Gee11}, \cite{BD11}, \cite{EGS}. Our main result is the following theorem.

\begin{theorem}\label{theorem-main}
Assume that $F_v$ is unramified over $\QM_p$ and $\brho$ is tamely ramified and generic. Under certain assumptions (see Cor. \ref{cor-final}), we have $\pi_v^D(\overline{r})^{K_1}\cong D_0(\brho)$ as $\GL_2(\OC_{F_v})$-representations, where $K_1=\ker(\GL_2(\OC_{F_v})\to \GL_2(k_{F_v})),$ and $D_0(\brho)$ is the finite dimensional $\GL_2(\OC_{F_v})$-representation attached to $\brho$ in \cite[Thm. 1.1]{BP}.
\end{theorem}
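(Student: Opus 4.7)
The plan is to establish the two inclusions separately, with the lower bound $D_0(\brho)\hookrightarrow \pi_v^D(\overline{r})^{K_1}$ being essentially contained in \cite{EGS}, and the upper bound requiring the new work. For the lower bound: the main theorem of Emerton-Gee-Savitt produces inside $\pi_v^D(\overline{r})$ a subrepresentation of the Breuil-Pa\v{s}k\=unas family, and any such representation contains $D_0(\brho)$ in its $K_1$-invariants by construction. So the substance of the proof is to show that no extra constituents appear.

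To prove the upper bound I would reduce it to matching Jordan-H\"older multiplicities over $\GL_2(k_{F_v})$. In the tame case the Jordan-H\"older constituents of $D_0(\brho)$ and their multiplicities are known explicitly from \cite{BP}, so it suffices to show that for each irreducible $\GL_2(k_{F_v})$-representation $\tau$,
\[
[\pi_v^D(\overline{r})^{K_1}:\tau]\;\le\;[D_0(\brho):\tau],
\]
the reverse inequality being automatic from the lower bound. The key tool is the patching functor $M_\infty$ from \cite{EGS}, which attaches to each finitely generated $\cO[\![\GL_2(\OC_{F_v})]\!]$-module a finitely generated module over a ring $R_\infty$ of patched deformations, together with the fact that $\pi_v^D(\overline{r})^{K_1}$ can be recovered (dually, and after patching) as $M_\infty(-)/\m_\infty$ for the relevant $K_1$-coinvariants. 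By Nakayama, the multiplicity $[\pi_v^D(\overline{r})^{K_1}:\tau]$ is then controlled by $\dim_{\bFp} M_\infty(\tau^\vee)/\m_\infty$.

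The next step is to compute these multiplicities. For $\tau = \sigma \in W(\brho)$ in the socle, EGS already gives $\dim_{\bFp} M_\infty(\sigma^\vee)/\m_\infty = 1$, reproducing the Buzzard-Diamond-Jarvis prediction. For constituents $\tau$ outside the socle but appearing in $D_0(\brho)$, I would choose a tame type $\sigma(\tau)$ (principal series or cuspidal) whose semisimplified mod $p$ reduction contains $\tau$, use the Breuil-M\'ezard formula in the tame case together with the explicit classification of lattices in tame types, and input the explicit combinatorics of $D_0(\brho)=\bigoplus_{\sigma\in W(\brho)} D_{0,\sigma}(\brho)$. Applying $M_\infty$ to short exact sequences inside carefully chosen lattices, one relates the $\tau$-multiplicity to Hilbert-Samuel multiplicities on the Galois side.

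The main obstacle, as I anticipate it, is this last step. The Breuil-M\'ezard formula only yields a sum of multiplicities over Serre weights in $W(\brho)$, not the individual contributions at each $\tau$; extracting these requires an iterative argument along the socle filtration of $D_0(\brho)$, combined with the precise lattice structure of tame types (as in \cite{EGS}) and a careful use of the injectivity properties of each block $D_{0,\sigma}(\brho)$. The genericity and tameness hypotheses on $\brho$ are what make the combinatorics tractable, so the argument should not be expected to extend mechanically beyond this setting.
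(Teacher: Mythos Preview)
Your lower bound is essentially the same as the paper's: $D_0(\brho)\hookrightarrow \pi_v^D(\overline r)^{K_1}$ follows from \cite[Prop.~9.3]{Br14} together with the Emerton--Gee--Savitt results.

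For the upper bound, however, your strategy diverges from the paper's, and in its present form has a genuine gap. The step ``by Nakayama, $[\pi_v^D(\overline r)^{K_1}:\tau]$ is controlled by $\dim_{\F} M_\infty(\tau^\vee)/\m_\infty$'' is not correct as stated. The quantity $M_\infty(\tau)/\m_\infty$ is (Pontryagin) dual to $\Hom_{\GL_2(\cO_{F_v})}(\tau,\pi_v^D(\overline r))$, i.e.\ it detects the multiplicity of $\tau$ in the \emph{socle} of $\pi_v^D(\overline r)^{K_1}$, not its Jordan--H\"older multiplicity. For any $\tau\notin\mathscr D(\brho)$ this dimension is zero, which tells you nothing about $[\pi_v^D(\overline r)^{K_1}:\tau]$. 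To access Jordan--H\"older multiplicities you would need $M_\infty(P_\tau)/\m_\infty$ with $P_\tau$ a projective cover of $\tau$, and computing this via Breuil--M\'ezard requires knowing the contribution of every Jordan--H\"older factor of $P_\tau$; your ``iterative argument along the socle filtration'' would have to do real work here that you have not sketched.

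The paper avoids this entirely. It proves a purely local criterion (Corollary~\ref{cor-local result}): if $W\subseteq\bigoplus_{\sigma\in\mathscr D(\brho)}R_\sigma$ contains $D_0(\brho)$ and, for the single tame type $V$ with $\JH(\overline V)=\mathscr D(\brho)$, one has $\dim_\F\Hom_\Gamma(\overline{V_\tau^\circ},W)=1$ for every Serre weight $\tau\in\mathscr D(\brho)$, then $W=D_0(\brho)$. The criterion rests on the structure of $D_{0,\sigma}(\brho)$ inside the injective envelope $R_\sigma$---specifically that it is ``alternative'' (Definition~\ref{defn-alternative}) and that $\Ext^1_\Gamma(\sigma,D_{0,\sigma}(\brho))=0$ (Lemma~\ref{lemma-Ext-sigma}), both of which use tameness in an essential way. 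The only global input needed is then EGS multiplicity one (Theorem~\ref{thm-EGS}) for that one tame type at the Serre weights, giving the required one-dimensionality of $\Hom_\Gamma(\overline{V_\tau^\circ},\pi_v^D(\overline r)^{K_1})$. No Jordan--H\"older counting for non-Serre weights is ever performed.
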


\begin{remark}
After the first version of the paper was written, we are informed that Le, Morra and Schraen obtain a similar result independently \cite{LMS}. Both the proofs use results of \cite{EGS} as a global input, however, the local representation theory part is quite different.
\end{remark}

We also prove a similar result when $D$ is a definite quaternion algebra unramified at all places over $p.$
In \cite{EGS}, the subspace of  pro-$p$-Iwahori fixed vectors   of $\pi_v^D(\overline{r})$ is determined, for $\brho$ generic, but could be reducible non-split.  The proof of our theorem uses the construction of \cite{EGS} as a main tool. We intend to extend the result to reducible non-split $\brho$ in future work.

The organization of the paper is as follows.  In Section 2, we give all the local results that we need, especially the local criterion Corollary \ref{cor-local result}. Note that the criterion does not apply to $\brho$ reducible non-split (see Remark \ref{rem-nonsplit}). In Section 3, we deduce our main theorem from our local results and results of \cite{EGS}.

We assume $p\geq 3$ throughout the paper; this is harmless for our application. We fix a finite extension $E$ of $\QM_p$ with ring of integers $\OC_E,$ uniformiser $\varpi_E$ and residue field $\FM,$ which is allowed to be enlarged. They will serve as coefficient fields (or rings) for our representations.

\section{Local input}

In this section, $L$ denotes a finite extension of $\Q_p$, with ring of integers $\cO_L$, maximal ideal $\p_L$, and residue field (identified with) $\F_q=\F_{p^f}$.   Fix a uniformiser $\varpi_L$ of $L$; we take $\varpi_L=p$ when $L$ is unramified over $\Q_p$. For $\lambda\in\F_q$, $[\lambda]\in\cO_L$ denotes its Teichm\"uller lift.
Let $\Gamma=\GL_2(\F_q)$. 
We call a weight an   irreducible representation of $\Gamma$ over $\bFp$. We take $E$ large enough so that any weight is defined over $\F$, then a weight is (up to isomorphism) of the form (\cite[Prop. 1]{BL})
\[\Sym^{r_0}\F^2\otimes_{\F}(\Sym^{r_1}\F^2)^{\mathrm{Fr}}\otimes_{\F}\cdots\otimes_{\F}(\Sym^{r_{f-1}}\F^2)^{\mathrm{Fr}^{f-1}}\otimes_{\F}{\det}^a\]
where  $0\leq r_i\leq p-1$, $0\leq a\leq q-2$ and $\mathrm{Fr}: \smatr{a}bcd\mapsto \smatr{a^p}{b^p}{c^p}{d^p}$ is the Frobenius on $\Gamma$.  We denote this representation by $(r_0,\cdots,r_{f-1})\otimes {\det}^a$.  To simplify the notation, we write $V_r:=\Sym^r\F^2$ for $0\leq r\leq p-1$.
We recall the following definition  from \cite[Def. 2.1.4]{EGS}.

\begin{definition}\label{defn-regular}
We say that $(r_0,\cdots,r_{f-1})\otimes {\det}^a$ is \emph{regular} if no $r_i$ is equal to $p-1$.
\end{definition}

Let $\Rep_{\Gamma}$ be the category of finite dimensional $\F$-representations of $\Gamma$. If $M\in \Rep_{\Gamma}$,  we write $\{\Fil_iM, i\geq 0\}$ for its socle filtration, that is, $\Fil_0M=\rsoc_{\Gamma}M$, $\Fil_1M$ is the preimage in $M$ of $\rsoc_{\Gamma}(M/\Fil_0M)$, etc. We call $\mathrm{gr}_iM:=\Fil_iM/\Fil_{i-1}M$ the $i$-th layer of the socle filtration where $\Fil_{-1}M:=0$ by convention.   Similarly we write $\{\Fil^iM, i\geq 0\}$ for the cosocle filtration of $M$ and $\mathrm{gr}^iM$ for the graded pieces. 

We first recall some results on the structure of injective envelopes in $\Rep_{\Gamma}$, mainly following  \cite[\S3]{BP}.
If $r=p-1$, we set $R_{p-1}:=V_{p-1}$.
If $0\leq r\leq p-2$, let $R_r$ be a $\Gamma$-invariant  subspace of $V_{p-r-1}\otimes V_{p-1}$ defined in \cite[Def. 4.2.10]{Pa04}. By \cite[Lem. 3.1]{AJL}, it is self-dual up to a twist and admits $V_r\otimes{\det}^{p-1-r}$ as a sub-representation and also as a quotient. Moreover, letting $W_r\subset R_r$ be given by the exact sequence
\begin{equation}\label{equation-p_i}0\ra W_r\ra R_{r}  \ra  V_r\otimes{\det}^{p-1-r}\ra0,\end{equation}
 we get a filtration on $R_r$:
 \[0\subsetneq V_r\otimes{\det}^{p-1-r}\subsetneq W_r\subsetneq R_r\]
with graded pieces being $V_r\otimes{\det}^{p-1-r}$, $V_{p-2-r}\otimes V_1^{\rm Fr}$, and $V_r\otimes {\det}^{p-1-r}$. Remark that when $f\geq 2$ or $f=1$ and $r\neq0$, this coincides with the socle filtration of $R_{\sigma}$.

Now  fix a weight $\sigma=(r_0,\cdots,r_{f-1})\otimes {\det}^{-\sum_{i=0}^{f-1}p^ir_i}$  with $0\leq r_i\leq p-1$ and set $R_{\sigma}:=\otimes_{i=0}^{f-1}R_{r_i}^{\mathrm{Fr}^i}$.
If $\dim \sigma\geq 2$, i.e., if not all $r_i$ equal to $0$, then $R_{\sigma}$ is an injective envelope of $\sigma$, see \cite[Cor. 4.2.22]{Pa04}. Otherwise, $R_{
\sigma}$ is isomorphic to ${\rm inj}_{\Gamma}(0,\cdots,0)\oplus (p-1,\cdots,p-1)$, where ${\rm inj}_{\Gamma}(0,\cdots,0)$ denotes the injective envelope of $(0,\cdots,0)$ in $\Rep_{\Gamma}$, see \cite[Cor. 4.2.31]{Pa04}.

From now on (until the end of subsection \ref{subsection-A}), we assume  that  $\dim \sigma\geq 2$ and that $\sigma$ is \emph{regular} in the sense of Definition \ref{defn-regular}.


\begin{lemma}\label{lemma-inter}
(i) For $0\leq i\neq j\leq f-1$, we have
\begin{equation}\label{equation-inter}(W_{r_i}^{\Fr^i}\otimes R_{r_j}^{\Fr^j})\cap (R_{r_i}^{\Fr^i}\otimes W_{r_j}^{\Fr^j})=W_{r_i}^{\Fr^i}\otimes W_{r_j}^{\Fr^j}.\end{equation}
The same statement holds if we replace $W_{r_i}$, $W_{r_j}$ by $V_{r_i}\otimes{\det}^{p-1-r_i}$, $V_{r_j}\otimes{\det}^{p-1-r_j}$.

(ii) We have $\bigcap_{i=0}^{f-1} \bigl(W_{r_i}^{\Fr^i}\otimes (\otimes_{j\neq i}R_{r_j}^{\Fr^j})\bigr)=\otimes_{i=0}^{f-1} W_{r_i}^{\Fr^i}$. The same statement holds if we replace $W_{r_i}$ by $V_{r_i}\otimes{\det}^{p-1-r_i}$.
\end{lemma}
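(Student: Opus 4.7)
The plan is to reduce both parts to a single linear algebra fact about intersections of ``block'' subspaces in a tensor product. Namely: if $B_0,\dots,B_{f-1}$ are finite-dimensional $\F$-vector spaces and $A_i\subset B_i$ are subspaces, then inside $\bigotimes_{i=0}^{f-1}B_i$ one has
\[
\bigcap_{i=0}^{f-1}\Bigl(A_i\otimes\bigotimes_{j\neq i}B_j\Bigr)\;=\;\bigotimes_{i=0}^{f-1}A_i.
\]
The $\Gamma$-action, the Frobenius twists, and the specific nature of $W_{r_i}$ versus $V_{r_i}\otimes{\det}^{p-1-r_i}$ play no role in the argument; one only needs that these are subspaces of $R_{r_i}$. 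For the second alternative in each part, this is guaranteed by the filtration recalled above the lemma, which exhibits $V_{r_i}\otimes{\det}^{p-1-r_i}\subset W_{r_i}\subset R_{r_i}$ as a chain of subrepresentations.

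For part (i), I would prove the $f=2$ case of the reduction directly by picking $\F$-linear complements. Writing $B_1=A_1\oplus A_1'$ and $B_2=A_2\oplus A_2'$, the tensor product $B_1\otimes B_2$ decomposes as the internal direct sum of the four blocks $A_1\otimes A_2$, $A_1\otimes A_2'$, $A_1'\otimes A_2$, $A_1'\otimes A_2'$. One then reads off that $A_1\otimes B_2=(A_1\otimes A_2)\oplus(A_1\otimes A_2')$ and $B_1\otimes A_2=(A_1\otimes A_2)\oplus(A_1'\otimes A_2)$, whose intersection is $A_1\otimes A_2$. Applied to $(B_i,A_i)=(R_{r_i}^{\Fr^i},W_{r_i}^{\Fr^i})$ and to $(R_{r_i}^{\Fr^i},V_{r_i}\otimes{\det}^{p-1-r_i})^{\Fr^i}$, this yields both versions of (i).

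For part (ii), I would induct on $f$, establishing the intermediate claim
\[
\bigcap_{i=0}^{k}\Bigl(A_i\otimes\bigotimes_{j\neq i}B_j\Bigr)\;=\;\Bigl(\bigotimes_{i=0}^{k}A_i\Bigr)\otimes\Bigl(\bigotimes_{j>k}B_j\Bigr)
\]
for $0\leq k\leq f-1$. The inductive step reduces to part (i) as follows: after assuming the equality at level $k-1$, one intersects with $A_k\otimes\bigotimes_{j\neq k}B_j$; choosing a basis of the ``outer'' factor $\bigotimes_{j>k}B_j$, tensoring by it commutes with intersections, so one is left with an intersection inside $\bigotimes_{i=0}^{k}B_i$ of two subspaces of the form $(\bigotimes_{i<k}A_i\otimes B_k)$ and $(\bigotimes_{i<k}B_i\otimes A_k)$, to which the $f=2$ case of the reduction applies.

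The proof is really a matter of careful bookkeeping; I do not anticipate any serious obstacle. The only conceptual point is that one is working over a field, so complements always exist and tensoring by a fixed space is exact — this is what makes the block-decomposition argument, and hence the extraction of outer tensor factors in the induction, valid.
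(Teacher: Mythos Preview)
Your proposal is correct and follows essentially the same approach as the paper: the paper dismisses (i) as a standard fact about tensor products (citing Bourbaki, \emph{Alg\`ebre commutative}, Chap.~I, \S2, n$^{\circ}$~6, Prop.~7, which is precisely the complement/block argument you spell out) and then deduces (ii) by the same obvious induction you describe.
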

\begin{proof}
The (i) is standard, see for example \cite[Chap. I, \S2, n$^{\circ}$ 6, Prop. 7]{Bour}.
For (ii), we  proceed by an obvious induction  using (i).
\end{proof}

We find it convenient to give a name for the intersection in Lemma \ref{lemma-inter}(ii).
\begin{definition}
Denote by $A_{\sigma}$ the following sub-representation of $R_{\sigma}$:
\[A_{\sigma}=\bigotimes_{i=0}^{f-1}W_{r_i}^{\Fr^i}.\]
\end{definition}
\begin{proposition}\label{prop-A-multone}
$A_{\sigma}$ is multiplicity free, and is the largest sub-representation of $R_{\sigma}$ which is multiplicity free.
\end{proposition}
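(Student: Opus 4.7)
The plan is to prove the two assertions separately.

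For the multiplicity-freeness of $A_\sigma$, I would compute its Jordan--H\"older constituents directly from the tensor product structure. Each factor $W_{r_i}^{\mathrm{Fr}^i}$ admits a length-two socle filtration whose graded pieces are $(V_{r_i}\otimes\det^{p-1-r_i})^{\mathrm{Fr}^i}$ and $(V_{p-2-r_i}\otimes V_1^{\mathrm{Fr}})^{\mathrm{Fr}^i}$, both irreducible as $\Gamma$-representations under the regularity hypothesis. The constituents of $A_\sigma$ are then obtained by picking a socle or cosocle piece at each index $i$ and decomposing the resulting tensor product using Clebsch--Gordan in every slot $j$ where $W_{r_{j-1}}^{\mathrm{Fr}^{j-1}}$ contributes a factor $V_1^{\mathrm{Fr}^j}$. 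A slot-by-slot bookkeeping of the symmetric power exponents together with the accumulated determinant twist modulo $q-1$ then shows, using regularity to keep the relevant exponents in the range $\{0,\dots,p-1\}$, that the constituents arising from different choices are pairwise distinct as Serre weights. Hence $A_\sigma$ is multiplicity-free.

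For the maximality, set
\[
N_i := W_{r_i}^{\mathrm{Fr}^i}\otimes \bigotimes_{j\neq i}R_{r_j}^{\mathrm{Fr}^j},
\]
so that Lemma \ref{lemma-inter}(ii) gives $A_\sigma = \bigcap_i N_i$. It suffices to show that any multiplicity-free subrepresentation $M\subseteq R_\sigma$ is contained in every $N_i$. The key observation is that the quotient $Q_i := R_\sigma/N_i$ has simple socle isomorphic to $\sigma$: since $R_{r_i}/W_{r_i}\cong V_{r_i}\otimes\det^{p-1-r_i}\cong \mathrm{soc}(R_{r_i})$, the module $Q_i$ is abstractly isomorphic to the submodule $\mathrm{soc}(R_{r_i})^{\mathrm{Fr}^i}\otimes\bigotimes_{j\neq i}R_{r_j}^{\mathrm{Fr}^j}$ of $R_\sigma$, whose socle is contained in $\mathrm{soc}(R_\sigma)=\sigma$ and equals $\sigma$ because $\sigma$ itself lies in this submodule.

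Given this, I would argue by contradiction: suppose $M\not\subseteq N_i$ for some $i$, and let $\pi_i:R_\sigma\twoheadrightarrow Q_i$ denote the projection. Then $\pi_i(M)$ is nonzero, and since $\mathrm{soc}(Q_i)\cong\sigma$ is simple, $\sigma$ is a composition factor of $\pi_i(M)$. On the other hand, $\mathrm{soc}(R_{r_i})\subseteq W_{r_i}$ implies $\sigma\subseteq N_i$, so $\sigma=\mathrm{soc}(M)\subseteq M\cap N_i$ gives another occurrence of $\sigma$ as a composition factor of $M\cap N_i$. The exact sequence $0\to M\cap N_i\to M\to \pi_i(M)\to 0$ then forces $\sigma$ to appear in $M$ with multiplicity at least two, contradicting multiplicity-freeness. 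The main obstacle will be the first assertion: one has to check that the Clebsch--Gordan interactions in adjacent Frobenius slots do not produce accidental coincidences among Jordan--H\"older constituents. By contrast, the second assertion becomes almost mechanical once the description of $\mathrm{soc}(Q_i)$ is in place.
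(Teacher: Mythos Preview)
Your proposal is correct, and the route is genuinely different from the paper's. The paper simply identifies $A_\sigma$ with the representation $V_{\mathbf{2p-2-r}}$ of \cite[Def.~3.3]{BP} and quotes \cite[Prop.~3.6, Cor.~3.11]{BP} for both assertions. Your argument for the maximality is a self-contained and rather clean alternative: the key input, that $R_\sigma/N_i$ has simple socle $\sigma$, is exactly what the paper proves later in Proposition~\ref{prop-socleB} (and your justification via the abstract isomorphism with a submodule of $R_\sigma$ is the same one used there). So you are effectively previewing that computation and using it to bypass the citation of \cite[Cor.~3.11]{BP}. This buys independence from \cite{BP} for the second claim at essentially no extra cost.

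For the first assertion, your sketch via Clebsch--Gordan bookkeeping is essentially the content of \cite[Lem.~3.4, 3.5, Prop.~3.6]{BP}, so here you are not really avoiding \cite{BP} but rather outlining what their proof does. One small imprecision: the graded piece $(V_{p-2-r_i}\otimes V_1^{\mathrm{Fr}})^{\mathrm{Fr}^i}$ is irreducible as a $\Gamma$-representation only when $f\ge 2$; for $f=1$ it is $V_{p-2-r_0}\otimes V_1$, which splits as $V_{p-1-r_0}\oplus (V_{p-3-r_0}\otimes\det)$ (the second summand absent when $r_0=p-2$). Since you go on to apply Clebsch--Gordan in adjacent slots anyway, this does not affect the strategy, but you should adjust the wording. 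As you yourself note, carrying out the slot-by-slot verification that no two constituents coincide is the real work; the paper delegates this entirely to \cite{BP}.
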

\begin{proof}
In the notation of \cite[\S3]{BP}, $A_{\sigma}$ is exactly the representation $V_{\mathbf{2p-2-r}}$ defined in \cite[Def. 3.3]{BP}. The assertion then follows from \cite[Prop. 3.6,  Cor. 3.11]{BP}.
\end{proof}

  To study the structure of $A_{\sigma}$, we need to introduce another sub-representation of $R_{\sigma}$.

\subsection{The representation $A'_{\sigma}$}

 \begin{definition}
We define $A'_{\sigma}$ to be the sub-representation of $R_{\sigma}$:
\[A'_{\sigma}:=\sum_{i=0}^{f-1}\bigl(R_{r_i}^{\Fr^i}\otimes (\otimes_{j\neq i}(V_{r_j}\otimes{\det}^{p-1-r_j})^{\Fr^j})\bigr).\]
We write $A_{\sigma,i}'$ for $R_{r_i}^{\Fr^i}\otimes (\otimes_{j\neq i}(V_{r_j}\otimes{\det}^{p-1-r_j})^{\Fr^j})$ so that $A'_{\sigma}=\sum_{i=0}^{f-1}A'_{\sigma,i}$.
\end{definition}

\begin{lemma}\label{lemma-f+1}
The multiplicity with which $\sigma$ appears in (the semisimplification of) $A'_{\sigma}$ is $f+1$.
\end{lemma}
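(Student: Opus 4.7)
The plan is to compute $[A'_\sigma:\sigma]$ in the Grothendieck group by iterated application of $[V+W]=[V]+[W]-[V\cap W]$ to the sum $A'_\sigma=\sum_i A'_{\sigma,i}$ inside $R_\sigma$, exploiting the tensor-product structure of each $A'_{\sigma,i}$.

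The first step is to show $[A'_{\sigma,i}:\sigma]=2$ for each $i$. Tensoring the three-step filtration of $R_{r_i}^{\Fr^i}$ (with graded pieces $(V_{r_i}\otimes{\det}^{p-1-r_i})^{\Fr^i}$ at top and bottom, and middle piece $V_{p-2-r_i}^{\Fr^i}\otimes V_1^{\Fr^{i+1}}$) with $\bigotimes_{j\ne i}(V_{r_j}\otimes{\det}^{p-1-r_j})^{\Fr^j}$, the top and bottom layers each give a copy of $\sigma$ (using $\sum_j p^j(p-1-r_j)\equiv-\sum_j p^j r_j\pmod{q-1}$). Writing $i':=i+1\bmod f$, at position $i'$ one forms $V_1\otimes V_{r_{i'}}$, whose Jordan--H\"older factors by Clebsch--Gordan (valid by regularity $r_{i'}\le p-2$) are $V_{r_{i'}+1}$ and, when $r_{i'}\ge 1$, also $V_{r_{i'}-1}\otimes\det$. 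In either case the weight component at position $i'$ equals $r_{i'}\pm 1$, distinct from the component $r_{i'}$ demanded by $\sigma$; hence no copy of $\sigma$ arises from the middle-layer contribution.

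The second step is to establish $B_{k-1}\cap A'_{\sigma,k-1}=\sigma$ for $2\le k\le f$, where $B_k:=\sum_{i=0}^{k-1}A'_{\sigma,i}$. Each $A'_{\sigma,i}$ with $i\le k-2$ is contained in the tensor subspace $\bigotimes_{j<k-1}R_{r_j}^{\Fr^j}\otimes\bigotimes_{j\ge k-1}(V_{r_j}\otimes{\det}^{p-1-r_j})^{\Fr^j}$, hence so is $B_{k-1}$; while $A'_{\sigma,k-1}$ itself has $R_{r_{k-1}}^{\Fr^{k-1}}$ at position $k-1$ and $(V_{r_j}\otimes{\det}^{p-1-r_j})^{\Fr^j}$ at every other position. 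The intersection of these two ambient tensor subspaces factors position-by-position (the standard Bourbaki result, an iterated form of Lemma~\ref{lemma-inter}(i)), giving $\bigotimes_j(V_{r_j}\otimes{\det}^{p-1-r_j})^{\Fr^j}=\sigma$. The reverse inclusion $\sigma\subseteq B_{k-1}\cap A'_{\sigma,k-1}$ is automatic since $\sigma$ lies in every $A'_{\sigma,i}$.

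Combining the two steps yields $[B_k:\sigma]=[B_{k-1}:\sigma]+2-1=[B_{k-1}:\sigma]+1$ for $k\ge 2$, and induction from $[B_1:\sigma]=[A'_{\sigma,0}:\sigma]=2$ produces $[A'_\sigma:\sigma]=[B_f:\sigma]=f+1$. The main obstacle is Step 1: one must track the determinant twists carefully to verify that no Jordan--H\"older factor of the middle-layer contribution coincides with $\sigma$. The edge case $f=1$ (where the two Frobenius indices of the middle piece collapse into one and $A'_\sigma=R_{r_0}$) is handled separately by direct inspection of the filtration on $R_{r_0}$, in which the middle graded piece $V_{p-2-r_0}\otimes V_1$ again decomposes by Clebsch--Gordan into weights distinct from $V_{r_0}$.
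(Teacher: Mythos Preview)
Your proof is correct and follows essentially the same approach as the paper: both use inclusion--exclusion on the sum $\sum_i A'_{\sigma,i}$, computing the pairwise (or partial-sum) intersections via the tensor-product intersection formula of Lemma~\ref{lemma-inter}(i). Your write-up is in fact more explicit than the paper's---you spell out $[A'_{\sigma,i}:\sigma]=2$ via Clebsch--Gordan (the paper leaves this implicit) and you state precisely the intersection $B_{k-1}\cap A'_{\sigma,k-1}=\sigma$ that the paper's phrase ``An induction shows\dots'' tacitly requires.
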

\begin{proof}
If $f=1$,  then $A'_{\sigma,0}$ is just $R_{r_0}=R_{\sigma}$ and the result follows from \cite[Lemmas 3.4, 3.5, 3.8(i)]{BP}. Assume $f\geq 2$ in the rest of the proof. Tensoring with $\otimes_{k\neq i,j}(V_{r_k}\otimes{\det}^{p-1-r_k})^{\Fr^k}$, Lemma \ref{lemma-inter}(i) implies that for any $0\leq i\neq j\leq f-1$, we have
\[A'_{\sigma,i}\cap A'_{\sigma,j}=\sigma,\] so that
\[0\ra \sigma\ra A'_{\sigma,i}\oplus A'_{\sigma,j}\ra A'_{\sigma,i}+A'_{\sigma,j}\ra 0 \]
is exact. This shows that the multiplicity with which $\sigma$ appears in $A'_{\sigma,i}+A'_{\sigma,j}$ is $3$.  An induction shows that the multiplicity with which $\sigma$ appears in $\sum_{i=0}^{m}A'_{\sigma,i}$ is $m+1$ for any $0\leq m\leq f-1$.
\end{proof}

We can describe $A'_{\sigma,i}$ explicitly.
\begin{lemma}\label{lemma-cosocle-A_i}
The $\Gamma$-socle and cosocle of $A'_{\sigma,i}$ is isomorphic to $\sigma$.
\end{lemma}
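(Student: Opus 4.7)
The plan is to establish the socle and cosocle statements separately, in each case exploiting the ambient structure of $R_{\sigma}$ together with the filtration of $R_{r_i}$ recalled in (\ref{equation-p_i}).

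For the socle: under the standing hypothesis $\dim\sigma\geq 2$ and the regularity of $\sigma$, the text has recalled that $R_{\sigma}$ is the $\Gamma$-injective envelope of $\sigma$, so $\rsoc_{\Gamma}(R_{\sigma})=\sigma$ is simple. Set $X:=\bigotimes_{j\neq i}(V_{r_j}\otimes{\det}^{p-1-r_j})^{\Fr^j}$, an irreducible weight. Because the socle $(V_{r_i}\otimes{\det}^{p-1-r_i})^{\Fr^i}$ of $R_{r_i}^{\Fr^i}$ tensored with $X$ realises $\sigma$ inside $A'_{\sigma,i}=R_{r_i}^{\Fr^i}\otimes X$, the representation $A'_{\sigma,i}$ is nonzero. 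The inclusion $A'_{\sigma,i}\subset R_{\sigma}$ then forces $\rsoc_{\Gamma}(A'_{\sigma,i})$ to be a nonzero subrepresentation of the simple module $\sigma$, and hence equal to $\sigma$.

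For the cosocle I proceed in two substeps. First, twisting the short exact sequence (\ref{equation-p_i}) by $\Fr^i$ and tensoring with $X$ gives a surjection
\[A'_{\sigma,i}\twoheadrightarrow (V_{r_i}\otimes{\det}^{p-1-r_i})^{\Fr^i}\otimes X\cong\sigma,\]
so $\sigma$ occurs as a quotient of $A'_{\sigma,i}$, and therefore as a direct summand of $\rcosoc_{\Gamma}(A'_{\sigma,i})$. Second, to show the cosocle is in fact simple I would combine the self-duality up to a twist of $R_{r_i}$ provided by \cite[Lem. 3.1]{AJL} with the obvious self-duality up to a twist of each irreducible factor $V_{r_j}\otimes{\det}^{p-1-r_j}$; this yields an isomorphism $(A'_{\sigma,i})^{\vee}\cong A'_{\sigma,i}\otimes\psi$ for some character $\psi$. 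Dualising the socle calculation then gives $\rcosoc_{\Gamma}(A'_{\sigma,i})\cong\sigma^{\vee}\otimes\psi^{-1}$, which is simple; together with the surjection onto $\sigma$ this forces $\rcosoc_{\Gamma}(A'_{\sigma,i})\cong\sigma$.

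The argument is almost entirely bookkeeping. The only mildly delicate point is tracking the precise character twist arising in the self-duality step, but since only the qualitative statement that the cosocle is simple is needed, the explicit form of $\psi$ can be left unspecified. A more computational alternative would be to identify $A'_{\sigma,i}$ with one of the subrepresentations of $R_{\sigma}$ already analysed in \cite[\S3]{BP}, but the self-duality route is self-contained and fits the spirit of the surrounding discussion.
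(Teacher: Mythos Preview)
Your proof is correct and follows essentially the same approach as the paper's: the socle is obtained from the inclusion into the injective envelope $R_{\sigma}$, and the cosocle from the self-duality (up to twist) of each tensor factor combined with the explicit surjection onto $\sigma$ coming from (\ref{equation-p_i}). The only difference is cosmetic---the paper deduces simplicity of the cosocle first and then identifies it, whereas you reverse the order---but the ingredients are identical.
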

\begin{proof}

Since $A_{\sigma,i}'$ is a non-zero sub-representation of  $R_{\sigma}$ and $\rsoc(R_{\sigma})=\sigma$, we have $\rsoc(A_{\sigma,i}')=\sigma$ and in particular is irreducible.
Now, the representations $R_{r_i}^{\Fr^i}$ and $(V_{r_j}\otimes{\det}^{p-1-r_j})^{\Fr^j}$ are  self-dual up to a twist, so is $A'_{\sigma,i}$. We deduce that the cosocle of $A'_{\sigma,i}$ is also irreducible.  It follows from (\ref{equation-p_i}) that $A'_{\sigma,i}$ admits $\sigma$ as a quotient, hence the cosocle has to be $\sigma$.
\end{proof}

\begin{definition}
For each $0\leq i\leq f-1$, we define two weights as follows:
\begin{enumerate}
\item[--] if $f=1$, let $\mu_0^{+}(\sigma):=V_{p-1-r_0}$ and $\mu_0^-(\sigma):=V_{p-3-r_0}\otimes{\det}$;
\item[--] if $f\geq 2$, let \[\mu_i^{\pm}(\sigma):=(r_0,\cdots,p-2-r_{i},r_{i+1}\pm1,\cdots,r_{f-1})\otimes{\det}^{a_i^{\pm}-\sum_{j=0}^{f-1}p^jr_j}.\]
where $a_i^+=p^i(r_i+1)-p^{i+1}$ and $a_i^-=p^i(r_i+1)$.
\end{enumerate}
By convention, when $\mu_i^{-}(\sigma)$ is not a genuine weight, i.e. when $f=1$ and $r_0=p-2$, or $f\geq 2$ and $r_{i+1}=0$,  we say that $\mu_i^-(\sigma)$ is not defined.
\end{definition}

By \cite[Lem. 3.8(i)]{BP} the representation \[(V_{p-2-r_i}^{\mathrm{Fr}^i}\otimes V_1^{\mathrm{Fr}^{i+1}})\otimes\bigl(\otimes_{j\neq i}(V_{r_j}\otimes{\det}^{p-1-r_j})^{\Fr^j}\bigr) \]
is semi-simple, and is isomorphic to $\mu_i^+(\sigma)\oplus\mu_i^-(\sigma)$; here and below we ignore $\mu_i^-(\sigma)$ in the direct sum when it is not defined. Knowing the socle and cosocle of $A'_{\sigma,i}$, we get the following.

\begin{lemma}
The socle filtration of $A'_{\sigma,i}$ has length 3, with graded pieces  $\sigma$, $\mu_i^+(\sigma)\oplus\mu_i^-(\sigma)$, and $\sigma$.
\end{lemma}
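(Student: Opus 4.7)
The plan is to first produce an explicit three-step filtration on $A'_{\sigma,i}$ using the filtration on $R_{r_i}$ recalled in \eqref{equation-p_i}, and then to upgrade it to the socle filtration via Lemma \ref{lemma-cosocle-A_i}.

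For the first step, each factor $(V_{r_j}\otimes{\det}^{p-1-r_j})^{\Fr^j}$ with $j\neq i$ is irreducible, hence a flat $\F$-module, so tensoring the filtration $0\subsetneq V_{r_i}\otimes{\det}^{p-1-r_i}\subsetneq W_{r_i}\subsetneq R_{r_i}$ (twisted by $\Fr^i$) with $\otimes_{j\neq i}(V_{r_j}\otimes{\det}^{p-1-r_j})^{\Fr^j}$ preserves exactness and yields an ascending filtration of $A'_{\sigma,i}$ with three graded pieces. I would compute these directly: the bottom and top both collapse to $\sigma$, while the middle becomes $(V_{p-2-r_i}^{\Fr^i}\otimes V_1^{\Fr^{i+1}})\otimes\bigl(\otimes_{j\neq i}(V_{r_j}\otimes{\det}^{p-1-r_j})^{\Fr^j}\bigr)$, which by the semisimplicity calculation recalled just before the lemma (via \cite[Lem. 3.8(i)]{BP}) is isomorphic to $\mu_i^+(\sigma)\oplus\mu_i^-(\sigma)$. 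This also pins down the Jordan--H\"older content of $A'_{\sigma,i}$: two copies of $\sigma$ and one copy each of $\mu_i^\pm(\sigma)$ (when defined).

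To upgrade the constructed filtration to the socle filtration, I would invoke Lemma \ref{lemma-cosocle-A_i}, which gives $\rsoc(A'_{\sigma,i})=\rcosoc(A'_{\sigma,i})=\sigma$. Hence $\Fil_0(A'_{\sigma,i})=\sigma$, and the quotient $Q:=A'_{\sigma,i}/\Fil_0$ has Jordan--H\"older factors $\sigma,\mu_i^+(\sigma),\mu_i^-(\sigma)$. The weight $\sigma$ cannot appear in $\rsoc(Q)$: otherwise its preimage in $A'_{\sigma,i}$ would be a semisimple subrepresentation strictly containing $\rsoc(A'_{\sigma,i})$, contradicting the irreducibility of the socle. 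Meanwhile, the constructed filtration already exhibits $\mu_i^+(\sigma)\oplus\mu_i^-(\sigma)$ as a subrepresentation of $Q$, so by Jordan--H\"older content $\rsoc(Q)=\mu_i^+(\sigma)\oplus\mu_i^-(\sigma)$. This yields $\mathrm{gr}_1(A'_{\sigma,i})=\mu_i^+(\sigma)\oplus\mu_i^-(\sigma)$; the remaining piece $\mathrm{gr}_2(A'_{\sigma,i})$ is then forced to be $\sigma$, and the socle filtration has the claimed length and graded pieces.

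I do not anticipate a serious obstacle. The only step beyond bookkeeping is ruling out a copy of $\sigma$ inside $\rsoc(Q)$, but this is immediate from the irreducibility of $\rsoc(A'_{\sigma,i})$ provided by Lemma \ref{lemma-cosocle-A_i}; everything else is a matter of tensoring a short exact sequence by irreducible factors and reading off the middle piece via the previously established semisimplicity.
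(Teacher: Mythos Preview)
Your approach is the same as the paper's (which is very terse: it simply says the lemma follows from knowing the socle and cosocle together with the semisimplicity of the middle piece). Your write-up fills in the details correctly except at one point.

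The sentence ``otherwise its preimage in $A'_{\sigma,i}$ would be a semisimple subrepresentation strictly containing $\rsoc(A'_{\sigma,i})$'' is not justified. The preimage of a copy of $\sigma$ in $\rsoc(Q)$ is an extension $0\to\sigma\to E\to\sigma\to 0$ sitting inside $A'_{\sigma,i}$; nothing you have said forces this extension to split, and irreducibility of $\rsoc(A'_{\sigma,i})$ alone does not do it. (In fact, if such an $E$ existed inside $A'_{\sigma,i}$ it would have to be \emph{non}-split, since its socle is contained in $\rsoc(A'_{\sigma,i})=\sigma$.) One can repair this by first checking $\Ext^1_\Gamma(\sigma,\sigma)=0$, which follows from $\mathrm{gr}_1 R_\sigma\cong\bigoplus_i(\mu_i^+(\sigma)\oplus\mu_i^-(\sigma))$ and $\sigma\not\cong\mu_i^\pm(\sigma)$, but that is extra work.

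The cleaner fix, and the one the paper's phrasing hints at, is to use the \emph{cosocle} half of Lemma~\ref{lemma-cosocle-A_i} at this step rather than the socle. You already know $\mu_i^+(\sigma)\oplus\mu_i^-(\sigma)\subseteq\rsoc(Q)$; if in addition $\sigma\subseteq\rsoc(Q)$, then by Jordan--H\"older content $Q$ is semisimple, hence $Q$ is a semisimple quotient of $A'_{\sigma,i}$ of length $>1$, contradicting $\rcosoc(A'_{\sigma,i})=\sigma$. With this adjustment the argument goes through.
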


\begin{corollary}\label{cor-A'_i-quotient}
We have an isomorphism $A_{\sigma,i}'/(A_{\sigma,i}'\cap A_{\sigma})\cong \sigma$.
\end{corollary}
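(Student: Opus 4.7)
The plan is direct: compute $A_{\sigma,i}'\cap A_\sigma$ factor by factor inside the ambient tensor product $R_\sigma$, and then identify the resulting quotient via the short exact sequence (\ref{equation-p_i}).

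First, I would observe that both $A_\sigma$ and $A_{\sigma,i}'$ are expressed as tensor products of sub-representations of $R_\sigma=\bigotimes_{k=0}^{f-1}R_{r_k}^{\Fr^k}$: at position $i$ we compare $W_{r_i}$ with $R_{r_i}$, and at position $j\neq i$ we compare $W_{r_j}$ with $V_{r_j}\otimes\det^{p-1-r_j}$. Applying the general formula for the intersection of tensor products of subspaces of vector spaces (the same Bourbaki reference invoked in the proof of Lemma \ref{lemma-inter}(i), inductively extended to $f$ factors exactly as in Lemma \ref{lemma-inter}(ii)), and using that $W_{r_i}\subset R_{r_i}$ and $V_{r_j}\otimes\det^{p-1-r_j}\subset W_{r_j}$, I obtain
\[
A_{\sigma,i}'\cap A_\sigma \;=\; W_{r_i}^{\Fr^i}\otimes\bigotimes_{j\neq i}(V_{r_j}\otimes\det^{p-1-r_j})^{\Fr^j}.
\]

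Next, taking the quotient, the factors at positions $j\neq i$ cancel, leaving
\[
A_{\sigma,i}'/(A_{\sigma,i}'\cap A_\sigma)\;\cong\;(R_{r_i}/W_{r_i})^{\Fr^i}\otimes\bigotimes_{j\neq i}(V_{r_j}\otimes\det^{p-1-r_j})^{\Fr^j}.
\]
By the short exact sequence (\ref{equation-p_i}), $R_{r_i}/W_{r_i}\cong V_{r_i}\otimes\det^{p-1-r_i}$, so the quotient is $\bigotimes_{k=0}^{f-1}(V_{r_k}\otimes\det^{p-1-r_k})^{\Fr^k}$.

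Finally, I would collect the $\det$-twists: since $\sum_{k=0}^{f-1}p^k(p-1-r_k)=p^f-1-\sum_{k=0}^{f-1}p^k r_k\equiv -\sum_{k=0}^{f-1}p^k r_k\pmod{p^f-1}$, the resulting weight is $(r_0,\ldots,r_{f-1})\otimes\det^{-\sum_k p^k r_k}=\sigma$, which gives the claim. The only point requiring any care is the multi-factor intersection formula, but this is a standard piece of linear algebra and the reduction to the two-factor case already underpins Lemma \ref{lemma-inter}; I do not expect a genuine obstacle.
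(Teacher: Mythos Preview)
Your proof is correct, and it proceeds by a more direct, computational route than the paper's. The paper instead argues structurally: it observes that $\Fil_1 A'_{\sigma,i}$ (which is precisely your $W_{r_i}^{\Fr^i}\otimes\bigotimes_{j\neq i}(V_{r_j}\otimes\det^{p-1-r_j})^{\Fr^j}$) is contained in $A'_{\sigma,i}\cap A_\sigma$ ``by construction'', and then invokes Proposition~\ref{prop-A-multone} (multiplicity-freeness of $A_\sigma$) together with the preceding lemma on the socle filtration of $A'_{\sigma,i}$ to rule out the top copy of $\sigma$ lying in the intersection---so the intersection equals $\Fil_1 A'_{\sigma,i}$ and the quotient is $\sigma$. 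Your approach avoids both of those auxiliary results, appealing only to the Bourbaki intersection formula already used in Lemma~\ref{lemma-inter} and to the sequence~(\ref{equation-p_i}); this makes the corollary logically independent of Proposition~\ref{prop-A-multone} and of the explicit socle-filtration description. The paper's argument, by contrast, highlights the representation-theoretic reason (a second copy of $\sigma$ cannot survive inside $A_\sigma$), which is the theme running through the section.
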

\begin{proof}
It follows from that $A'_{\sigma,i}\cap A_{\sigma}$ is of multiplicity free (by Proposition \ref{prop-A-multone}) and contains  $\Fil_1A_{\sigma,i}'$ by construction.
\end{proof}

 \subsection{The structure of $
A_{\sigma}$} \label{subsection-A}

\begin{proposition}\label{prop-socleB}
We have $\rsoc_{\Gamma}(R_{\sigma}/A_{\sigma})\cong \sigma^{\oplus f}$.
\end{proposition}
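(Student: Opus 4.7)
The plan is to sandwich $\rsoc_{\Gamma}(R_{\sigma}/A_{\sigma})$ between two copies of $\sigma^{\oplus f}$. For the lower bound I exhibit $f$ independent copies of $\sigma$ in the socle coming from $A'_{\sigma}$; for the upper bound I use that $A_{\sigma}$ is an intersection of $f$ natural submodules $B_{\sigma,i}$ of $R_{\sigma}$, and compute the socle of each quotient $R_{\sigma}/B_{\sigma,i}$ by exploiting a ``hidden'' sub-embedding of it back into $R_{\sigma}$.

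For the lower bound, Corollary~\ref{cor-A'_i-quotient} implies that each composite $A'_{\sigma,i}\hookrightarrow R_{\sigma}\twoheadrightarrow R_{\sigma}/A_{\sigma}$ factors through $A'_{\sigma,i}/(A'_{\sigma,i}\cap A_{\sigma})\cong\sigma$, producing $f$ irreducible submodules of $\rsoc_{\Gamma}(R_{\sigma}/A_{\sigma})$. Their sum equals the image of $A'_{\sigma}\to R_{\sigma}/A_{\sigma}$, namely $A'_{\sigma}/(A'_{\sigma}\cap A_{\sigma})$. By Lemma~\ref{lemma-f+1}, $\sigma$ has Jordan--H\"older multiplicity $f+1$ in $A'_{\sigma}$; since $A'_{\sigma}\cap A_{\sigma}$ is a subrepresentation of the multiplicity-free $A_{\sigma}$ (Proposition~\ref{prop-A-multone}) that contains $\rsoc_{\Gamma}R_{\sigma}=\sigma$, its $\sigma$-multiplicity is exactly $1$. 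So the image carries $\sigma$ with multiplicity $f$, which forces the $f$ submodules to be linearly independent and their sum to equal $\sigma^{\oplus f}$.

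For the upper bound, set $B_{\sigma,i}:=W_{r_i}^{\Fr^i}\otimes\bigotimes_{j\neq i}R_{r_j}^{\Fr^j}\subset R_{\sigma}$; Lemma~\ref{lemma-inter}(ii) gives $\bigcap_{i=0}^{f-1}B_{\sigma,i}=A_{\sigma}$, hence a $\Gamma$-equivariant injection
\[R_{\sigma}/A_{\sigma}\hookrightarrow\bigoplus_{i=0}^{f-1}R_{\sigma}/B_{\sigma,i},\]
reducing the claim to $\rsoc_{\Gamma}(R_{\sigma}/B_{\sigma,i})\cong\sigma$ for each $i$. The exact sequence defining $W_{r_i}$ gives $R_{\sigma}/B_{\sigma,i}\cong U_i\otimes V_i$, where $U_i:=(V_{r_i}\otimes\det^{p-1-r_i})^{\Fr^i}$ and $V_i:=\bigotimes_{j\neq i}R_{r_j}^{\Fr^j}$. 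The key point is that this same abstract module $U_i\otimes V_i$ also embeds as a \emph{submodule} of $R_{\sigma}$: the socle inclusion $V_{r_i}\otimes\det^{p-1-r_i}\hookrightarrow R_{r_i}$, twisted by $\Fr^i$ and tensored with $V_i$, yields $U_i\otimes V_i\hookrightarrow R_{\sigma}$. Since $\rsoc_{\Gamma}R_{\sigma}=\sigma$ is simple, the socle of any submodule $N\subseteq R_{\sigma}$ equals $N\cap\sigma$; in particular $\rsoc_{\Gamma}(U_i\otimes V_i)=(U_i\otimes V_i)\cap\sigma$. A direct twist computation using $(p-1)(1+p+\cdots+p^{f-1})=q-1$ shows $U_i\otimes\tau_0\cong\sigma$, where $\tau_0:=\bigotimes_{j\neq i}(V_{r_j}\otimes\det^{p-1-r_j})^{\Fr^j}\hookrightarrow V_i$, so $\sigma\subseteq U_i\otimes V_i$ and hence $(U_i\otimes V_i)\cap\sigma=\sigma$, as required. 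Combining the two bounds yields the isomorphism; the main obstacle is the upper bound, where the crux is recognising that the \emph{quotient} $R_{\sigma}/B_{\sigma,i}\cong U_i\otimes V_i$ can also be realised as a \emph{submodule} of $R_{\sigma}$, after which the simplicity of $\rsoc_{\Gamma}R_{\sigma}$ and a short twist bookkeeping suffice.
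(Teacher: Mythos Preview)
Your proof is correct and follows essentially the same approach as the paper: the upper bound via $A_{\sigma}=\bigcap_i B_{\sigma,i}$, the embedding $R_{\sigma}/B_{\sigma,i}\cong U_i\otimes V_i\hookrightarrow R_{\sigma}$, and the lower bound via $A'_{\sigma}$ together with Lemma~\ref{lemma-f+1} and Proposition~\ref{prop-A-multone} are exactly the paper's ingredients. Your extra twist computation showing $U_i\otimes\tau_0\cong\sigma$ is a harmless elaboration; the paper simply notes $\rsoc_{\Gamma}(R_{\sigma}/A_{\sigma})\subseteq\sigma^{\oplus f}$ and lets the lower bound close the gap.
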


\begin{proof}
For each $0\leq i\leq f-1$, tensoring the exact sequence (\ref{equation-p_i}) with $\otimes_{j\neq i}R_{r_j}^{\Fr^j}$ gives an exact sequence
\[0\ra W_{r_i}^{\Fr^i}\otimes (\otimes_{j\neq i}R_{r_j}^{\Fr^j})\ra R_{\sigma}\ra (V_{r_i}\otimes{\det}^{p-1-r_i})^{\Fr^i}\otimes (\otimes_{j\neq i}R_{r_j}^{\Fr^j})\ra 0.\]
By Lemma \ref{lemma-inter}(ii), $A_{\sigma}$ is equal to the intersection of  $W_{r_i}^{\Fr^i}\otimes (\otimes_{j\neq i}R_{r_j}^{\Fr^j})$ for  $0\leq i\leq f-1$, so we get an injection
 \[R_{\sigma}/A_{\sigma}\hookrightarrow \bigoplus_{i=0}^{f-1}(V_{r_i}\otimes{\det}^{p-1-r_i})^{\Fr^i}\otimes (\otimes_{j\neq i}R_{r_j}^{\Fr^j}).\]
Since $(V_{r_i}\otimes{\det}^{p-1-r_i})^{\Fr^i}\otimes(\otimes_{j\neq i}R_{r_j}^{\Fr^j})$  embeds into $R_{\sigma}$, we deduce that $R_{\sigma}/A_{\sigma}$ embeds into $(R_{\sigma})^{\oplus f}$ so that $\rsoc_{\Gamma}(R_{\sigma}/A_{\sigma})\subseteq \sigma^{\oplus f}$. 
To show the equality, we use the embedding
\[A'_{\sigma}/(A'_{\sigma}\cap A_{\sigma})\hookrightarrow R_{\sigma}/A_{\sigma},\]
whose image in fact lies in the \emph{socle} of $R_{\sigma}/A_{\sigma}$ by Corollary \ref{cor-A'_i-quotient}. Since $A_{\sigma}$ is multiplicity free, $\sigma$ appears in $A'_{\sigma}\cap A_{\sigma}$ exactly once. Lemma \ref{lemma-f+1} implies that the multiplicity of $\sigma$ in   $\rsoc_{\Gamma}(R_{\sigma}/A_{\sigma})$ is at least $f$, which completes the proof.
\end{proof}

It is easy to see from the above proof that $A'_{\sigma}/(A'_{\sigma}\cap A_{\sigma})$  is identified with $\rsoc_{\Gamma}(R_{\sigma}/A_{\sigma})$. Let $B_{\sigma}$ be the pullback of $\rsoc_{\Gamma}(R_{\sigma}/A_{\sigma})$, i.e. given by
\[0\ra A_{\sigma}\ra B_{\sigma}\ra \rsoc_{\Gamma}(R_{\sigma}/A_{\sigma})\ra0.\]
Then $A'_{\sigma}$ is contained in $B_{\sigma}$; in fact, $B_{\sigma}=A_{\sigma}'+A_{\sigma}$.
\begin{lemma}\label{lemma-nosigma}
The quotient $B_{\sigma}/A'_{\sigma}$ has no sub-quotient isomorphic to $\sigma$.
\end{lemma}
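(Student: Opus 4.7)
The plan is to reduce the question to a statement about a quotient of $A_\sigma$ via the second isomorphism theorem, and then to invoke the multiplicity-freeness from Proposition \ref{prop-A-multone}. Since $B_\sigma = A_\sigma + A'_\sigma$ (as noted just before the statement), the second isomorphism theorem yields a canonical identification
\[
B_\sigma/A'_\sigma \;\cong\; A_\sigma/(A_\sigma\cap A'_\sigma),
\]
so it is enough to check that $\sigma$ is not a composition factor of $A_\sigma/(A_\sigma\cap A'_\sigma)$.

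Next I would locate the copies of $\sigma$ sitting inside $A_\sigma$ and inside $A'_\sigma$. By Proposition \ref{prop-A-multone}, $A_\sigma$ is multiplicity free, while it clearly contains $\soc R_\sigma = \sigma$ (since $R_\sigma$ is an injective envelope of $\sigma$). Hence $\sigma$ occurs with multiplicity exactly one in $A_\sigma$, realized as its socle. On the other hand, each summand $A'_{\sigma,i}$ has socle $\sigma$ by Lemma \ref{lemma-cosocle-A_i}, so $A'_\sigma$, being a nonzero subrepresentation of the injective envelope $R_\sigma$, also contains $\soc R_\sigma = \sigma$.

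Consequently, the socle copy of $\sigma$ in $A_\sigma$ coincides with the one in $A'_\sigma$ (both equal $\soc R_\sigma$) and therefore lies in $A_\sigma \cap A'_\sigma$. The quotient $A_\sigma/(A_\sigma\cap A'_\sigma)$ is thus a further quotient of $A_\sigma/\soc(A_\sigma)$, which by the multiplicity freeness of $A_\sigma$ no longer has $\sigma$ as a composition factor. This completes the argument. There is no real obstacle here; the only mildly delicate point is the observation that the unique $\sigma$-composition factor of the multiplicity-free module $A_\sigma$ must coincide with its socle, which is automatic.
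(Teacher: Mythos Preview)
Your proof is correct. The reduction via the second isomorphism theorem to $A_\sigma/(A_\sigma\cap A'_\sigma)$, together with Proposition~\ref{prop-A-multone} and the observation that the unique copy of $\sigma$ in $A_\sigma$ is its socle (hence already lies in $A_\sigma\cap A'_\sigma$), is clean and complete.

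It is worth noting that this is not the route taken in the paper. There the argument is a direct multiplicity count on the ambient object: from the exact sequence $0\to A_\sigma\to B_\sigma\to \sigma^{\oplus f}\to 0$ (Proposition~\ref{prop-socleB}) and the multiplicity freeness of $A_\sigma$, one sees that $\sigma$ occurs $f+1$ times in $B_\sigma$; by Lemma~\ref{lemma-f+1} it also occurs $f+1$ times in $A'_\sigma$, so none remain in the quotient. Your approach is more economical in that it bypasses both Lemma~\ref{lemma-f+1} and Proposition~\ref{prop-socleB}, relying only on $B_\sigma = A_\sigma + A'_\sigma$ and the multiplicity freeness of $A_\sigma$. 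The paper's count, on the other hand, makes the numerics explicit and fits naturally with the surrounding computations of the $\sigma$-multiplicities in $A'_\sigma$ and in $\mathrm{soc}_\Gamma(R_\sigma/A_\sigma)$.
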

\begin{proof}
We have an exact sequence $0\ra A_{\sigma}\ra B_{\sigma}\ra \sigma^{\oplus f}\ra0$ by Proposition \ref{prop-socleB}. Moreover,   $A_{\sigma}$ is multiplicity free, with $\sigma$ appearing there exactly once. So the multiplicity of $\sigma$ in (the semisimplification of) $B_{\sigma}$ is $f+1$, which is also the multiplicity of $\sigma$ in $A_{\sigma}'$ by Lemma \ref{lemma-f+1}.
\end{proof}
\begin{proposition}\label{prop-Q'}
Let $M$ be  a sub-representation of $B_{\sigma}$. Assume $M\cap A'_{\sigma}\subseteq A_{\sigma}$. Then $M\subseteq A_{\sigma}$.
\end{proposition}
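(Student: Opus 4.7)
The plan is to play the hypothesis against Lemma \ref{lemma-nosigma} by comparing two natural quotients of $M$. First, combining the hypothesis $M \cap A'_\sigma \subseteq A_\sigma$ with the trivial inclusion $M \cap A'_\sigma \subseteq M$ gives
\[M \cap A'_\sigma \;\subseteq\; M \cap A_\sigma,\]
so there is a canonical surjection
\[M/(M \cap A'_\sigma) \twoheadrightarrow M/(M \cap A_\sigma).\]

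Next, I would identify each side with a sub-representation of a known quotient of $B_\sigma$. By the second isomorphism theorem, $M/(M\cap A'_\sigma)$ is the image of $M$ under $B_\sigma\twoheadrightarrow B_\sigma/A'_\sigma$, hence a sub-representation of $B_\sigma/A'_\sigma$; by Lemma \ref{lemma-nosigma} this quotient has no Jordan--H\"older factor isomorphic to $\sigma$, so $M/(M\cap A'_\sigma)$ has no $\sigma$-factor. Similarly, $M/(M\cap A_\sigma)$ embeds into $B_\sigma/A_\sigma$, which by Proposition \ref{prop-socleB} is isomorphic to $\sigma^{\oplus f}$; by Schur's lemma any sub-representation of $\sigma^{\oplus f}$ is $\sigma$-isotypic, so $M/(M\cap A_\sigma)\cong \sigma^{\oplus k}$ for some $k\geq 0$.

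The conclusion is then immediate: a surjection from a representation with no $\sigma$-constituent onto a $\sigma$-isotypic representation is forced to have zero target, so $k=0$ and $M\subseteq A_\sigma$. I do not expect any genuine obstacle here; the only conceptual step is recognising that the hypothesis should be rephrased as the inclusion $M\cap A'_\sigma\subseteq M\cap A_\sigma$ so that the two short exact sequences $0\to M\cap A'_\sigma \to M \to M/(M\cap A'_\sigma)\to 0$ and $0\to M\cap A_\sigma \to M \to M/(M\cap A_\sigma)\to 0$ can be compared through the quotient surjection above. All the substantive representation-theoretic input is already packaged in Lemma \ref{lemma-nosigma} and Proposition \ref{prop-socleB}.
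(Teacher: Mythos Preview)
Your proof is correct and follows essentially the same approach as the paper's: both arguments deduce $M\cap A'_\sigma\subseteq M\cap A_\sigma$, form the resulting surjection $M/(M\cap A'_\sigma)\twoheadrightarrow M/(M\cap A_\sigma)$, identify the source as a subrepresentation of $B_\sigma/A'_\sigma$ (no $\sigma$-constituent by Lemma~\ref{lemma-nosigma}) and the target as a subrepresentation of $B_\sigma/A_\sigma\cong\sigma^{\oplus f}$ (via the definition of $B_\sigma$ and Proposition~\ref{prop-socleB}), and conclude the target vanishes. The paper packages the same steps in a commutative diagram with maps $\alpha,\alpha'$, but the logic is identical.
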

\begin{proof}
Consider the commutative diagram of exact sequences (with $\alpha$ the induced morphism)
\[\xymatrix{0\ar[r]& M\cap A_{\sigma}\ar[d]\ar[r]& M\ar[r]\ar[d]& M/(M\cap A_{\sigma})\ar^{\alpha}[d]\ar[r]&0\\
0\ar[r]&A_{\sigma}\ar[r]&B_{\sigma}\ar[r]&\rsoc_{\Gamma}(R_{\sigma}/A_{\sigma})\ar[r]&0.}\]
Since $\alpha$ is injective, it suffices to show $\alpha=0$.
The assumption implies $M\cap A'_{\sigma}\subseteq M\cap A_{\sigma}$. Denote by $\alpha'$ the composition
\[\alpha': M/(M\cap A'_{\sigma})\twoheadrightarrow M/(M\cap A_{\sigma})\overset{\alpha}{\ra} \rsoc_{\Gamma}(R_{\sigma}/A_{\sigma}).\]
The first map being surjective, $\alpha=0$ if and only if $\alpha'=0$. However, the natural isomorphism $M/(M\cap A'_{\sigma})\cong (M+A'_{\sigma})/A'_{\sigma}$ allows us to view $M/(M\cap A'_{\sigma})$ as a sub-representation of $B_{\sigma}/A'_{\sigma}$. By Lemma \ref{lemma-nosigma}, $B_{\sigma}/A'_{\sigma}$ has no sub-quotient isomorphic to $\sigma$, while $\rsoc_{\Gamma}(R_{\sigma}/A_{\sigma})\cong \sigma^{\oplus f}$ by Proposition \ref{prop-socleB}, hence $\alpha'=0$.
\end{proof}

Now we treat the general case.
\begin{proposition}\label{prop-criteria}
Let $M$ be a sub-representation of $R_{\sigma}$. Assume $M\cap A'_{\sigma}\subseteq A_{\sigma}$. Then $M\subseteq A_{\sigma}$.
\end{proposition}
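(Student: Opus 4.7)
The plan is to reduce Proposition \ref{prop-criteria} to the already-proved Proposition \ref{prop-Q'} by intersecting $M$ with $B_\sigma$. Set $N := M\cap B_\sigma$. Since $A'_\sigma\subseteq B_\sigma$, the hypothesis on $M$ gives
\[
N\cap A'_\sigma\;=\;M\cap B_\sigma\cap A'_\sigma\;=\;M\cap A'_\sigma\;\subseteq\;A_\sigma,
\]
and $N\subseteq B_\sigma$ by construction, so Proposition \ref{prop-Q'} applies and yields $N\subseteq A_\sigma$. The content of the proof is therefore to show that the passage from $M$ to $N$ loses no information, i.e. $M\subseteq B_\sigma$.

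For this I would argue by contradiction, assuming $M\not\subseteq A_\sigma$, and exploit the characterization of $B_\sigma$ as the preimage in $R_\sigma$ of $\rsoc_\Gamma(R_\sigma/A_\sigma)$ established after Proposition \ref{prop-socleB}. First, the image $\bar M:=(M+A_\sigma)/A_\sigma$ is a nonzero subrepresentation of $R_\sigma/A_\sigma$, hence has nonzero socle. Since $\rsoc_\Gamma(\bar M)\subseteq\rsoc_\Gamma(R_\sigma/A_\sigma)=B_\sigma/A_\sigma$, the intersection $\bar M\cap(B_\sigma/A_\sigma)$ is nonzero. Translating this back, $(M+A_\sigma)\cap B_\sigma\supsetneq A_\sigma$.

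Next, using that $A_\sigma\subseteq B_\sigma$, the modular law gives
\[
(M+A_\sigma)\cap B_\sigma\;=\;(M\cap B_\sigma)+A_\sigma\;=\;N+A_\sigma.
\]
Combined with the previous step, $N+A_\sigma\supsetneq A_\sigma$, i.e.\ $N\not\subseteq A_\sigma$. But we already deduced from Proposition \ref{prop-Q'} that $N\subseteq A_\sigma$, a contradiction. Hence $M\subseteq A_\sigma$, as desired.

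The main conceptual step, and where the previous work of the section pays off, is the observation that the socle of the quotient $R_\sigma/A_\sigma$ is exactly $B_\sigma/A_\sigma$, so any nontrivial piece of $M$ lying outside $A_\sigma$ is forced to contribute to $M\cap B_\sigma$ outside $A_\sigma$; once this is in hand, Proposition \ref{prop-Q'} does the rest. No subtle structural input beyond Proposition \ref{prop-socleB}, Proposition \ref{prop-Q'}, and the modular law is expected to be needed.
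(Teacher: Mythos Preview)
Your proof is correct and follows essentially the same approach as the paper: set $N=M\cap B_\sigma$, apply Proposition \ref{prop-Q'} to get $N\subseteq A_\sigma$, then use that the socle of $R_\sigma/A_\sigma$ is $B_\sigma/A_\sigma$ to derive a contradiction from $M\not\subseteq A_\sigma$. The only cosmetic difference is that the paper does an element chase (lifting a nonzero $\bar v\in\im(M\to R_\sigma/A_\sigma)\cap\rsoc_\Gamma(R_\sigma/A_\sigma)$ to $v\in M\cap B_\sigma=N$), whereas you package the same step via the modular law $(M+A_\sigma)\cap B_\sigma=(M\cap B_\sigma)+A_\sigma$; one remark: your sentence ``i.e.\ $M\subseteq B_\sigma$'' overstates what you actually prove (and need), but the argument that follows it is the right one.
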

\begin{proof}
Let $M':=M\cap B_{\sigma}$.  Then $M'\cap A'_{\sigma}=M\cap A'_{\sigma}\subseteq A_{\sigma}$, so Proposition \ref{prop-Q'} implies $M'\subseteq A_{\sigma}$. Consider the following commutative diagram where $\alpha'$ is the map constructed in  Proposition \ref{prop-Q'} (with $M$ replaced by $M'$)
\[\xymatrix{M'\ar@{^{(}->}[r]\ar@{->>}[d]&M\ar@{->>}[d]\\
M'/(M'\cap A'_{\sigma}) \ar^{\alpha'}[d] &M/(M\cap A_{\sigma})\ar@{^{(}->}^{\alpha}[d]\\
\rsoc_{\Gamma}(R_{\sigma}/A_{\sigma})\ar@{^{(}->}[r]&R_{\sigma}/A_{\sigma}.}\]
We know that $\alpha'=0$ and we want to prove $\alpha=0$. Assume $\alpha\neq0$. Then $\im(\alpha)$ would have a non-zero intersection with $\rsoc_{\Gamma}(R_{\sigma}/A_{\sigma})$. Let $\bar{v}\in \im(\alpha)\cap \rsoc_{\Gamma}(R_{\sigma}/A_{\sigma})$ and let $v\in M$ be a lift of $\bar{v}$. Then $v$ belongs to $B_{\sigma}$ by definition, so that $v\in M\cap B_{\sigma}=M'$, which gives a contradiction because $\alpha'=0$.
\end{proof}

Next we give a criterion for the condition in Proposition \ref{prop-criteria} to hold.
In particular, if $M$ is a sub-representation of $R_{\sigma}$, then $\mathrm{gr}_1M$ embeds into
\[\mathrm{gr}_1R_{\sigma}\cong\bigoplus_{i=0}^{f-1} \bigl(\mu_i^+(\sigma)\oplus \mu^-_{i}(\sigma)\bigr).\]
\begin{definition}\label{defn-alternative}
Let $\sigma$ be a regular weight and $M$ be a sub-representation of $R_{\sigma}$.
We say that $M$ is \emph{alternative},   if
  for each $0\leq i\leq f-1$, $\mathrm{gr}_1M$ does not contain simultaneously $\mu_i^+(\sigma)$ and $\mu_i^-(\sigma)$ (resp. does not contain $\mu_i^+(\sigma)$) if $\mu_i^-(\sigma)$ is defined (resp. if $\mu_i^-(\sigma)$ is not defined).
\end{definition}

Since  $R_{\sigma}$ is an injective envelope of $\sigma$ in $\Rep_{\Gamma}$, we have  a natural isomorphism $\Hom_{\Gamma}(\sigma,R_{\sigma}/A_{\sigma})\cong\Ext^1_{\Gamma}(\sigma,A_{\sigma})$. Hence  $\Ext^1_{\Gamma}(\sigma,A_{\sigma})$  is of dimension $f$  over $\F$ by Proposition \ref{prop-socleB}. On the other hand, for each $0\leq i\leq f-1$, we can use Corollary \ref{cor-A'_i-quotient} to view  $A'_{\sigma,i}$ as an (non-zero) element in $\Ext^1_{\Gamma}(\sigma,A_{\sigma}\cap A'_{\sigma,i})$, whose push-out provides a non-zero element  in $\Ext^1_{\Gamma}(\sigma,A_{\sigma})$, denoted by $X_i$, and it is easily seen that $\{X_i, 0\leq i\leq f-1\}$ form an $\F$-basis of $\Ext^1_{\Gamma}(\sigma,A_{\sigma})$. For our purpose, it is more convenient to view $X_i$ as  an element of $\Ext^1_{\Gamma}(\sigma,A_{\sigma}\cap A'_{\sigma})$. Remark, to be precise, that $A_{\sigma}\cap A'_{\sigma}$ fits in a short exact sequence
\begin{equation}\label{equation-A-A'}0\ra \sigma\ra A_{\sigma}\cap A'_{\sigma}\ra \bigoplus_{i=0}^{f-1}(\mu_i^+(\sigma)\oplus\mu_i^-(\sigma))\ra0,\end{equation}
and by Lemma \ref{lemma-cosocle-A_i} we have $\Hom_{\Gamma}(X_i,\mu_j^{\pm}(\sigma))=0$ if and only if $i=j$.

\begin{lemma}\label{lemma-alternative}
Let $M$ be an alternative  sub-representation of $R_{\sigma}$. 
Then $M\cap A'_{\sigma}\subseteq A_{\sigma}$.
\end{lemma}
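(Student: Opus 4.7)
The plan is to argue by contradiction: supposing some $v\in (M\cap A'_{\sigma})\setminus A_{\sigma}$, I want to use the cyclic subrepresentation $\Gamma v\subseteq M$ to force $\mathrm{gr}_1 M$ to contain a pair $\mu_j^+(\sigma)\oplus\mu_j^-(\sigma)$ (or $\mu_j^+(\sigma)$ alone when $\mu_j^-(\sigma)$ is not defined) forbidden by the alternative hypothesis. The bridge $\mathrm{gr}_1(\Gamma v)\subseteq\mathrm{gr}_1 M$ will be automatic, because both submodules of $R_{\sigma}$ share the same socle $\rsoc_{\Gamma}R_{\sigma}=\sigma$.

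For the bookkeeping, combining $B_{\sigma}=A_{\sigma}+A'_{\sigma}$ with Proposition \ref{prop-socleB} gives $A'_{\sigma}/(A_{\sigma}\cap A'_{\sigma})\cong\sigma^{\oplus f}$, and a dimension count via Corollary \ref{cor-A'_i-quotient} and Lemma \ref{lemma-f+1} shows that the images $\sigma_i$ of the $A'_{\sigma,i}$ realize this as an internal direct sum $\bigoplus_i\sigma_i$. Fixing an identification, $v$ projects to a tuple $(\alpha_0,\ldots,\alpha_{f-1})\in\F^f$, and the hypothesis $v\notin A_{\sigma}$ produces some index $j$ with $\alpha_j\neq 0$.

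The crux is a diagram chase. Set $K:=\Gamma v\cap A_{\sigma}$; since $\sigma\subseteq A_{\sigma}$ we have $\sigma\subseteq K$, and $K/\sigma$ embeds into $Q:=(A_{\sigma}\cap A'_{\sigma})/\sigma\cong\bigoplus_j(\mu_j^+(\sigma)\oplus\mu_j^-(\sigma))$, which is semisimple with pairwise distinct irreducible constituents, forcing a coordinatewise decomposition $K/\sigma=\bigoplus_j(K/\sigma)_j$ with $(K/\sigma)_j\subseteq\mu_j^+(\sigma)\oplus\mu_j^-(\sigma)$. Stacking the exact sequence $0\to K\to\Gamma v\to\sigma\to 0$ below $0\to A_{\sigma}\cap A'_{\sigma}\to A'_{\sigma}\to\bigoplus_i\sigma_i\to 0$ with right-hand column the map $(\alpha_i)\colon\sigma\to\bigoplus_i\sigma_i$, naturality of pullback/pushout will identify the pushout of $[\Gamma v]$ along $K\hookrightarrow A_{\sigma}\cap A'_{\sigma}$ with $\sum_i\alpha_i X_i$ in $\Ext^1_{\Gamma}(\sigma,A_{\sigma}\cap A'_{\sigma})$; the key input is that unravelling the construction of the $X_j$ as pushouts identifies the $j$-th coordinate of the bottom row precisely with $X_j$.

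Reducing modulo $\sigma$, the class of $\Gamma v/\sigma$ in $\Ext^1_{\Gamma}(\sigma,K/\sigma)$ pushes to $\sum_j\alpha_j[E_j]$ in $\Ext^1_{\Gamma}(\sigma,Q)$, where $E_j:=A'_{\sigma,j}/\sigma$; since $E_j$ is cyclic with cosocle $\sigma$ it cannot split off any $\mu_j^{\pm}(\sigma)$, so $[E_j]$ has nonzero projection to $\Ext^1_{\Gamma}(\sigma,\mu_j^+(\sigma))$ and (when defined) to $\Ext^1_{\Gamma}(\sigma,\mu_j^-(\sigma))$. A brief case analysis on the inclusion $(K/\sigma)_j\hookrightarrow\mu_j^+(\sigma)\oplus\mu_j^-(\sigma)$, using injectivity of the induced map $\Ext^1_{\Gamma}(\sigma,(K/\sigma)_j)\to\Ext^1_{\Gamma}(\sigma,\mu_j^+(\sigma)\oplus\mu_j^-(\sigma))$ (which follows from $\Hom_{\Gamma}(\sigma,\mu_j^{\pm}(\sigma))=0$), will then show that $\alpha_j\neq 0$ forces $(K/\sigma)_j$ to be the full $j$-th block. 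Hence $\mathrm{gr}_1 M\supseteq K/\sigma$ contains both $\mu_j^+(\sigma)$ and $\mu_j^-(\sigma)$ (resp.\ contains $\mu_j^+(\sigma)$), contradicting the alternative hypothesis on $M$. The hard part is the diagram chase pinning down the class $\sum_i\alpha_i X_i$; everything after that is routine homological algebra in $\Rep_{\Gamma}$.
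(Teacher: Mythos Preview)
Your argument is correct and follows the same overall contradiction strategy as the paper: produce a non-split extension of $\sigma$ by (a submodule of) $A_{\sigma}\cap A'_{\sigma}$, identify its pushforward as a linear combination $\sum_i \alpha_i X_i$ with some $\alpha_j\neq 0$, and use this to contradict the alternative hypothesis at index $j$. The identification of the pushforward with $\sum_i\alpha_i X_i$ via the morphism of short exact sequences is exactly right, and your observation that $K/\sigma$ is semisimple (hence lands in $\mathrm{gr}_1 M$) closes the argument cleanly.

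Where you diverge from the paper is at the final step. The paper keeps the class $X=\sum_i c_iX_i$ in $\Ext^1_{\Gamma}(\sigma,A_{\sigma}\cap A'_{\sigma})$ and applies the functor $\Hom_{\Gamma}(-,\mu_0^{\pm}(\sigma))$; to control how this functor behaves on a Baer sum it invokes a separate Lemma~\ref{lemma-Baer}. You instead push forward to $\Ext^1_{\Gamma}(\sigma,Q)$ with $Q=(A_{\sigma}\cap A'_{\sigma})/\sigma$, where the block decomposition $Q=\bigoplus_j Q_j$ makes the componentwise analysis transparent: the image of $X_i$ lives purely in the $i$-th block, and since $[E_j]$ has nonzero projection to both $\Ext^1_{\Gamma}(\sigma,\mu_j^+(\sigma))$ and $\Ext^1_{\Gamma}(\sigma,\mu_j^-(\sigma))$, the submodule $(K/\sigma)_j$ is forced to be all of $Q_j$. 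This is a genuine simplification: it replaces the somewhat ad hoc Baer-sum computation of Lemma~\ref{lemma-Baer} by the additivity of $\Ext^1$ in the second variable, at the modest cost of one extra pushforward. The paper's approach has the mild advantage of working uniformly inside $\Ext^1_{\Gamma}(\sigma,A_{\sigma}\cap A'_{\sigma})$ without passing to a quotient, but your route is more direct.
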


\begin{proof}
 Obviously we may assume that $M$ is contained in $A_{\sigma}'$. Suppose that $M\nsubseteq A_{\sigma}$, so that $M/(A_{\sigma}\cap M)$ is non-zero. Since every irreducible sub-representation of $M/(A_{\sigma}\cap M)$ is isomorphic to $\sigma$,  we get a non-split extension
\[0\ra  A_{\sigma}\cap M\ra Y\ra \sigma\ra0\]
whose push-out gives an extension class in $\Ext^1_{\Gamma}(\sigma,A_{\sigma}\cap A'_{\sigma})$,  denoted   by $X$. Then there exist $c_i\in\F$, not all zero, such that
\[X=c_0X_0+\cdots+c_{f-1}X_{f-1}.\]
Without loss of generality, we assume that $c_0\neq 0$. Since $\Hom_{\Gamma}(X_0,\mu_0^{\pm}(\sigma))=0$ by Lemma \ref{lemma-cosocle-A_i}, and
\[\Hom_{\Gamma}(X_i,\mu_0^{\pm}(\sigma))\neq0,\ \ 1\leq i\leq f-1,\] Lemma \ref{lemma-Baer} below, applied to $A=\sigma$, $B=A_{\sigma}\cap A_{\sigma}'$, $C=\mu_0^+(\sigma)$ or $\mu_0^-(\sigma)$,  implies that
\begin{equation}\label{equation-X}\Hom_{\Gamma}(X, \mu_0^{\pm}(\sigma))=0.\end{equation}
On the other hand, since $M$ is alternative,  $\mathrm{gr}_1M$ does not contain $\mu_0^*(\sigma)$ for $*\in \{+,-\}$ (with $*=+$ if $\mu_0^-(\sigma)$ is not defined), i.e. $\Hom_{\Gamma}(A_{\sigma}\cap M,\mu_0^*(\sigma))=0$. Using \eqref{equation-A-A'} we see that
\[\Hom_{\Gamma}\bigl((A_{\sigma}\cap A'_{\sigma})/(A_{\sigma}\cap M),\mu_0^*(\sigma)\bigr)\neq0.\]
By construction we have a commutative diagram of exact sequences
\[\xymatrix{0\ar[r]& A_{\sigma}\cap M\ar[r]\ar[d]&Y\ar[r]\ar[d]&\sigma\ar[r]\ar@{=}[d]&0\\
0\ar[r]&A_{\sigma}\cap A_{\sigma}'\ar[r]&X\ar[r]&\sigma\ar[r]&0}\]
from which we get $(A_{\sigma}\cap A_{\sigma}')/(A_{\sigma}\cap M)\cong X/Y$, therefore
\[0\neq \Hom_{\Gamma}(X/Y,\mu_0^*(\sigma))\hookrightarrow  \Hom_{\Gamma}(X,\mu_0^*(\sigma)).\] This contradicts \eqref{equation-X} and allows to conclude.
\end{proof}

\begin{lemma}\label{lemma-Baer}
Let $A,B,C\in\Rep_{\Gamma}$. Let $X,X'\in\Ext^1_{\Gamma}(A,B)$ which are represented by short exact sequences $0\ra B\ra X\ra A\ra0$ and $0\ra B\ra X'\ra A\ra0$. Assume that

(i) $\Hom_{\Gamma}(X,C)=0$;

(ii)  $\Hom_{\Gamma}(X',C)\ra \Hom_{\Gamma}(B,C)$ is an isomorphism.

\noindent Then the Baer sum $X+X'\in\Ext^1_{\Gamma}(A,B)$ satisfies $\Hom_\G(X+X',C)=0$.
\end{lemma}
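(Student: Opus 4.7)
The plan is to translate the two $\Hom$-hypotheses into statements about the connecting homomorphism in the long exact sequence of $\Ext$, and then exploit the additivity of the pushout with respect to the Baer sum.

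First, I would apply the contravariant functor $\Hom_\Gamma(-, C)$ to the three short exact sequences
\[
0 \to B \to Y \to A \to 0, \qquad Y \in \{X, X', X + X'\},
\]
obtaining in each case the four-term exact sequence
\[
0 \to \Hom_\Gamma(A, C) \to \Hom_\Gamma(Y, C) \to \Hom_\Gamma(B, C) \xrightarrow{\delta_Y} \Ext^1_\Gamma(A, C),
\]
in which the connecting map $\delta_Y$ sends a morphism $f \colon B \to C$ to the class of the pushout $f_* Y$.

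Next I would reinterpret the hypotheses in these terms. Hypothesis (i) says that $\Hom_\Gamma(X, C) = 0$, which by the sequence for $Y = X$ is equivalent to $\Hom_\Gamma(A, C) = 0$ together with $\delta_X$ being injective. Hypothesis (ii) says that $\Hom_\Gamma(X', C) \to \Hom_\Gamma(B, C)$ is an isomorphism; surjectivity is exactly the vanishing $\delta_{X'} = 0$, and injectivity is again $\Hom_\Gamma(A, C) = 0$, consistent with (i).

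To conclude, I would invoke the standard fact that the pushout is additive with respect to the Baer sum, so that
\[
\delta_{X + X'}(f) \;=\; f_*(X + X') \;=\; f_* X + f_* X' \;=\; \delta_X(f) + \delta_{X'}(f) \;=\; \delta_X(f)
\]
for every $f \in \Hom_\Gamma(B, C)$. Hence $\delta_{X + X'} = \delta_X$ is injective, and combined with $\Hom_\Gamma(A, C) = 0$ the four-term exact sequence attached to $X + X'$ immediately gives $\Hom_\Gamma(X + X', C) = 0$. There is no real obstacle here; the only point to verify carefully is the compatibility of the connecting homomorphism with the Baer sum, but this is a routine consequence of the identification of $\delta_Y$ with the Yoneda pairing against the class of the extension $Y$.
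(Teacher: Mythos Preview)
Your proof is correct, and it takes a genuinely different route from the paper's own argument. The paper proceeds concretely: it writes down the explicit pullback $X'' \subset X \times X'$ defining the Baer sum, observes that $X''$ sits in two exact sequences $0 \to B \to X'' \to X' \to 0$ and $0 \to B \to X'' \to X \to 0$ (with $B$ embedded via the skew diagonal in the first), and then uses hypotheses (i) and (ii) together with a dimension count to show that the map $\Hom_\Gamma(X'',C) \to \Hom_\Gamma(B,C)$ induced by the skew diagonal is an isomorphism; the conclusion then falls out of the sequence $0 \to B \to X'' \to X+X' \to 0$.

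Your approach bypasses the explicit model entirely by recognising that the connecting map $\delta_Y$ in the long exact sequence is the Yoneda pairing $f \mapsto f_*[Y]$, which is additive in $[Y]$; then (i) and (ii) translate directly into $\Hom_\Gamma(A,C)=0$, $\delta_X$ injective, and $\delta_{X'}=0$, whence $\delta_{X+X'}=\delta_X$ is injective. This is cleaner and more conceptual, and it has the mild advantage of not invoking finite-dimensionality (the paper's argument uses a dimension inequality to promote surjectivity of $\tilde{s}$ to bijectivity). The paper's proof, on the other hand, is entirely self-contained and does not appeal to the identification of the connecting homomorphism with pushout, which is the one fact you need to justify.
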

\begin{proof}
We first recall the construction of the Baer sum. Let $X''$ be the pullback $\{(x,x')\in X\times X': \bar{x}=\bar{x}'\ \mathrm{in}\ A\}$. Then $X+X'$ is the quotient of $X''$  by the skew diagonal $\{(-b,b):b\in B\}$, see \cite[Def. 3.4.4]{We}.
$X''$ contains three copies of $B$:  $0\times B$, $B\times 0$, and the skew diagonal,  taking quotient by  which we get respectively $X$, $X'$ and $Y:=X+X'$. Denote by $s:B\ra X''$ the skew diagonal morphism.

On the one hand, we have a commutative diagram of exact sequences
\[\xymatrix{0\ar[r]& B\ar[r]&X'\ar[r] &A\ar[r]&0\\
0\ar[r]&B\ar@{=}[u]\ar^{s}[r]&X''\ar[u]\ar[r]&Y\ar[u]\ar[r]&0}\]
which induces (using (ii))
\[\xymatrix{\Hom_{\Gamma}(X',C)\ar@{=}[r]\ar[d]&\Hom_{\Gamma}(B,C)\ar@{=}[d]\\
\Hom_{\Gamma}(X'',C)\ar^{\tilde{s}}[r]&\Hom_{\Gamma}(B,C).}\]
In particular, $\tilde{s}$ is surjective.
On the other hand,  using (i) and the exact sequence
$0\ra 0\times B\ra X''\ra X\ra0$
we get   \[\dim_{\F} \Hom_{\Gamma}(X'',C)\leq \dim_{\F}\Hom_{\Gamma}(B,C),\]
hence $\tilde{s}$ is in fact an isomorphism.

 The lemma follows by applying $\Hom_{\Gamma}(*,C)$ to the sequence $0\ra B\overset{s}{\ra} X''\ra Y\ra0$.
\end{proof}

\begin{remark}
Although we have only defined $R_{\sigma}$, $A_{\sigma}$, etc and proved the results for regular weights $\sigma$ of the form $(r_0,\cdots,r_{f-1})\otimes{\det}^{-\sum_{i=0}^{f-1}p^ir_{i}}$ such that $\dim\sigma\geq 2$, all these can obviously be generalized to   general regular weights of dimension at least $2$ by a twist.
\end{remark}

\subsection{An auxiliary lemma}

Let $\sigma$ be a regular weight (possibly of dimension $1$). By \cite[Lem. 3.2]{BP}, the irreducible constituents of $\mathrm{inj}_{\Gamma}{\sigma}$ (without multiplicities) are parametrised by a certain set $\cI(x_0,\cdots,x_{f-1})$ defined in the beginning of \cite[\S3]{BP}. For later use, we recall its definition. The set $\cI(x_0,\cdots,x_{f-1})$ consists  of elements of the form $\lambda=(\lambda_0(x_0),\cdots,\lambda_{f-1}(x_{f-1}))$ where $\lambda_0(x_0)\in\{x_0,p-2-x_0\pm1\}$ if $f=1$, and if $f>1$ then: \begin{enumerate}
\item[(i)] $\lambda_i(x_i)\in\{x_i,x_i\pm1,p-2-x_i,p-2-x_i\pm1\}$ for $0\leq i\leq f-1$
\item[(ii)] if $\lambda_i(x_i)\in\{x_i,x_i\pm1\}$, then $\lambda_{i+1}(x_{i+1})\in\{x_{i+1},p-2-x_{i+1}\}$
\item[(iii)] if $\lambda_i(x_i)\in\{p-2-x_i,p-2-x_i\pm1\}$, then $\lambda_{i+1}(x_{i+1})\in\{x_{i+1}\pm1,p-2-x_{i+1}\pm1\}$
\end{enumerate}
with the conventions $x_{f}=x_0$ and $\lambda_{f}(x_f)=\lambda_0(x_0)$.
Each $\lambda$ gives rise to a weight by ``evaluating'' at $\sigma=(r_0,\cdots,r_{f-1})\otimes\eta$, but note that it is not a genuine one if $\lambda_i(r_i)<0$ or $\lambda_i(r_i)>p-1$ for some $i$ (so we will ignore it in this case). In any case, an irreducible constituent of $A_{\sigma}$ corresponds to a unique element of $\cI(x_0,\cdots,x_{f-1})$.

Let $\cS:=\{0,...,f-1\}$. For $\lambda\in\cI(x_0,\cdots,x_{f-1})$, set
\[\cS(\lambda):=\{i\in\cS\ |\ \lambda_i(x_i)=p-2-x_i\pm1, x_i\pm1\}.\]
We also write $\cS(\tau):=\cS(\lambda)$ if $\tau$ is the corresponding weight.
Recall from \cite[Def. 4.10]{BP} that, if $\lambda,\lambda'\in\cI(x_0,\cdots,x_{f-1})$, we say $\lambda$ and $\lambda'$ are \emph{compatible} if, whenever $\lambda_i(x_i)\in \{p-2-x_i-\pm1,x_i\pm1\}$ and $\lambda_i'(x_i)\in\{p-2-x_i-\pm1,x_i\pm1\}$ for the same $i$, then the signs of the $\pm1$ are the same in $\lambda_i(x_i)$ and $\lambda'_i(x_i)$.
The following property is directly checked.
\begin{lemma}\label{lemma-oneweight}
Fix an element $\lambda\in\cI(x_0,\cdots,x_{f-1})$. For every subset $\cS'$ of $\cS(\lambda)$, there exists exactly one element $\lambda'\in\cI(x_0,\cdots,x_{f-1})$ such that $\cS(\lambda')=\cS'$ and $\lambda'$ is compatible with $\lambda$.
 \end{lemma}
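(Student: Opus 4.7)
The plan is to make the following observation, from which both existence and uniqueness follow by direct inspection. Reading the rules (ii) and (iii) defining $\cI(x_0,\ldots,x_{f-1})$, and calling the ``base'' of $\lambda_i(x_i)$ either $x_i$ (if $\lambda_i(x_i)\in\{x_i,x_i\pm1\}$) or $p-2-x_i$ (if $\lambda_i(x_i)\in\{p-2-x_i,p-2-x_i\pm1\}$), one sees that the base at position $i$ is determined by $\cS(\lambda)$: it is $x_i$ iff $i+1\notin\cS(\lambda)$, and $p-2-x_i$ iff $i+1\in\cS(\lambda)$ (indices cyclic, with the convention $\lambda_f=\lambda_0$). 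For $f=1$ this is exactly why $\lambda_0(x_0)\in\{x_0,p-2-x_0\pm1\}$ rather than the full six values. Consequently, $\lambda$ is recovered uniquely from the pair $(\cS(\lambda),\epsilon)$, where $\epsilon$ is the choice of sign $\pm$ at each $i\in\cS(\lambda)$; this is the desired bijection.

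Granted this, given $\cS'\subseteq\cS(\lambda)$ I would define $\lambda'$ coordinate-wise by specifying: base $x_i$ when $i+1\notin\cS'$ and base $p-2-x_i$ when $i+1\in\cS'$; no $\pm1$ when $i\notin\cS'$, and the sign of $\lambda_i(x_i)$ when $i\in\cS'\subseteq\cS(\lambda)$. By the observation above, this $\lambda'$ lies in $\cI(x_0,\ldots,x_{f-1})$ (the base pattern is exactly what the rules (ii)--(iii) impose once $\cS(\lambda')=\cS'$ is fixed), and one has $\cS(\lambda')=\cS'$ by construction. Compatibility with $\lambda$ holds because on $\cS(\lambda)\cap\cS(\lambda')=\cS'$ the signs of $\lambda'$ coincide with those of $\lambda$ by design.

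For uniqueness, any other compatible $\lambda''\in\cI(x_0,\ldots,x_{f-1})$ with $\cS(\lambda'')=\cS'$ must, by the bijection, have base at each $i$ determined by whether $i+1\in\cS'$, and the sign of $\lambda''_i(x_i)$ at each $i\in\cS'$ is forced by the compatibility condition to coincide with that of $\lambda_i(x_i)$. Hence $\lambda''=\lambda'$. I do not anticipate any substantive obstacle: once the bijection between $\cI(x_0,\ldots,x_{f-1})$ and ``subsets of $\cS$ equipped with signs'' is made explicit, the lemma is a direct verification, the only mild subtlety being to keep track of the cyclic convention at position $f-1$.
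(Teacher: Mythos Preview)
Your proposal is correct and amounts to the same direct verification the paper has in mind; the paper merely states ``The following property is directly checked'' without further detail. Your explicit observation that the ``base'' at position $i$ is determined by whether $i+1\in\cS(\lambda)$, and hence that $\cI(x_0,\ldots,x_{f-1})$ is in bijection with pairs (subset of $\cS$, choice of signs on that subset), is precisely the right way to organize this verification and makes both existence and uniqueness transparent.
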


Let $\tau$ be an irreducible constituent  of $\mathrm{inj}_{\Gamma}\sigma$. Then there exist finite dimensional representations of $\Gamma$ with socle $\sigma$ and cosocle $\tau$ by taking the image of any non-zero morphism $\phi\in\Hom_{\Gamma}(\mathrm{inj}_{\Gamma}\tau,\mathrm{inj}_{\Gamma}\sigma)$. Since $\sigma$ is regular, \cite[Cor. 3.12]{BP}  implies that among these representations there exists a unique one,  denoted by $I(\sigma,\tau)$, such that $\sigma$ appears with multiplicity 1. Moreover,  $I(\sigma,\tau)$ is multiplicity free.
By \cite[Cor. 4.11]{BP} and Lemma \ref{lemma-oneweight}, the irreducible constituents of $I(\sigma,\tau)$ are parametrised by subsets of $\cS(\tau)$, hence $I(\sigma,\tau)$ has \emph{at most} $2^{|\cS(\tau)|}$ irreducible constituents (as we possibly need to forget some fake weights).

\begin{lemma}\label{lemma-auxiliary}
Let $\sigma^c$ be an irreducible constituent of $\mathrm{inj}_{\Gamma}{\sigma}$.  Assume that   $I(\sigma,\sigma^c)$ has $2^{|\cS(\sigma^c)|}$ irreducible constituents. Then, for every irreducible constituent $\tau$ of $I(\sigma,\sigma^c)$, the following statements hold:

(i) the weight $\tau$ is regular;

(ii) there exists a unique weight $\tau^c$ of $I(\sigma,\sigma^c)$ such that $\cS(\tau^c)=\cS(\sigma^c)\backslash \cS(\tau)$;

(iii) the representation $I(\tau,\tau^c)$ exists, and has the same semi-simplification as $I(\sigma,\sigma^c)$.
\end{lemma}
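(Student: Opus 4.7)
My plan is to exploit the explicit combinatorial parametrisation of $\cI(x_0,\ldots,x_{f-1})$. By Lemma~\ref{lemma-oneweight}, constituents of $I(\sigma,\sigma^c)$ correspond bijectively to subsets $\cS'\subseteq\cS(\sigma^c)$ whose associated compatible $\lambda'$ evaluates to a genuine weight, and the hypothesis $|I(\sigma,\sigma^c)|=2^{|\cS(\sigma^c)|}$ says precisely that no fake weights occur. Given such a $\cS'$, the pure/pm type of $\lambda_i'$ is determined by whether $i\in\cS'$, the low/high type by whether $i+1\in\cS'$ (read cyclically via the rules (ii), (iii) in the definition of $\cI$), and the signs $\pm 1$ at positions in $\cS'$ are forced to coincide with those of $\lambda^{\sigma^c}$.

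Part (ii) is then immediate from Lemma~\ref{lemma-oneweight} applied to $\cS'=\cS(\sigma^c)\setminus\cS(\tau)$: uniqueness follows from the parametrisation and existence from the hypothesis. For part (iii), the key observation is that $\cS(\cdot)$ transforms as symmetric difference under change of base point, so relative to $\tau$ one has $\cS_\tau(\tau^c)=\cS_\sigma(\tau^c)\triangle\cS_\sigma(\tau)=\cS(\sigma^c)$. Hence subsets of $\cS_\tau(\tau^c)$ are in bijection with subsets of $\cS(\sigma^c)$ via $\cS'\mapsto\cS'\triangle\cS(\tau)$, and this correspondence preserves the underlying weights because both parametrise the same compatible elements of $\cI$. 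Combined with (i), which supplies the regularity of $\tau$ needed for $I(\tau,\tau^c)$ to be defined, the equality of semi-simplifications follows.

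Part (i) is the decisive step and the main obstacle. A direct case analysis of the six forms of $\lambda_i'$ shows that $\lambda_i'(r_i)=p-1$ forces $i\in\cS(\tau)$, $\epsilon_i^{\sigma^c}=+$, and a specific incidence condition on $i+1$ depending on whether $r_i=p-2$ (requires $i+1\notin\cS(\tau)$) or $r_i=0$ (requires $i+1\in\cS(\tau)$); symmetrically, $\lambda_i'(r_i)=-1$ forces $\epsilon_i^{\sigma^c}=-$ and the dual incidence condition. The strategy is: if $\tau$ has a non-regular entry at some position $i$, modify $\cS(\tau)$ inside $\cS(\sigma^c)$ by toggling the status of $i$, of $i+1$, or both, producing a subset $\cS''\subseteq\cS(\sigma^c)$ whose associated weight acquires a $-1$ entry and hence contradicts the hypothesis. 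The difficulty is that toggling $i$ changes the pure/pm type at $i$ itself while toggling $i+1$ changes the low/high type at $i$, so a single flip need not suffice and one may have to propagate the modification along a cyclic chain of positions. It is the full strength of the hypothesis (that \emph{every} subset $\cS'\subseteq\cS(\sigma^c)$ yields a genuine weight) that allows such a chain to close up into the desired contradiction.
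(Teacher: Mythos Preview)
Your treatment of (ii) and (iii) is correct and coincides with the paper's. The paper carries out the explicit verification that writing $\mu^c=\nu\circ\mu$ gives $\nu\in\cI(x_0,\ldots,x_{f-1})$ with $\cS(\nu)=\cS(\lambda^{\sigma^c})$, and more generally that every constituent $\mu'$ of $I(\sigma,\sigma^c)$ satisfies $\mu'=\nu'\circ\mu$ with $\nu'\in\cI$, $\cS(\nu')=\cS(\mu)\triangle\cS(\mu')\subseteq\cS(\nu)$, and $\nu'$ compatible with $\nu$. Your symmetric-difference formulation is exactly this computation, packaged more conceptually.

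For (i), however, your sketch is not a proof: you describe a toggling strategy and warn that one ``may have to propagate the modification along a cyclic chain of positions'', but you never execute it, and you give no indication of how the chain actually closes. The paper's argument, by contrast, is a \emph{single} toggle with no chains: assuming $\mu_i(r_i)=p-1$, it asserts this forces $r_i=0$ and $\mu_i(x_i)=p-1-x_i$, so that $i+1\in\cS(\mu)$; it then passes to $\mu'$ with $\cS(\mu')=\cS(\mu)\setminus\{i+1\}$ and claims compatibility forces $\mu'_i(x_i)=x_i-1$, whence $\mu'_i(r_i)=-1$, a fake weight.

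That said, your case analysis is more careful on one point: $\mu_i(r_i)=p-1$ also arises from $r_i=p-2$, $\mu_i(x_i)=x_i+1$ (the low $+$ case), which the paper omits, and there $i+1\notin\cS(\mu)$ so the paper's toggle is unavailable. Moreover, even in the case the paper treats, compatibility with $\mu_i=p-2-x_i+1$ (sign $+$) appears to force $\mu'_i=x_i+1$ rather than $x_i-1$, which would give $\mu'_i(0)=1$, not a fake weight. Concretely, with $p=5$, $f=2$, $\sigma=(3,1)$ and $\lambda^{\sigma^c}=(x_0+1,\,p-2-x_1)$ one has $\cS(\sigma^c)=\{0\}$, both subsets yield genuine weights, yet $\sigma^c=(4,2)$ is non-regular. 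So there is something genuine to sort out in (i) for both your sketch and the paper's written argument. Fortunately, in the only application (Proposition~\ref{prop-JH=Serre}) the constituents $\tau$ in question are Serre weights of a generic $\overline\rho$ and hence regular by \cite[Lem.~2.1.6]{EGS} independently of (i), so the downstream results are unaffected.
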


\begin{proof}
Since the results  in the case  $f=1$ are obvious,  we assume $f\geq 2$ in the rest of the proof.

(i) Assume $\tau$ is not regular. Let $\mu\in \cI(x_0,\cdots,x_{f-1})$ be the element corresponding to $\tau$. Then, $\mu_i(r_i)=p-1$ for some $i\in\cS$. Since $0\leq r_i\leq p-2$, this happens only when $r_i=0$ and $\mu_i(x_i)=p-1-x_i$. By definition of $\cI(x_0,\cdots,x_{f-1})$, we have $i+1\in\cS(\mu)$. By Lemma \ref{lemma-oneweight}, there exists a unique element $\mu'\in\cI(x_0,\cdots,x_{f-1})$   which is compatible with $\mu$ and such that $\cS(\mu')=\cS(\mu)\backslash\{i+1\}$. Then $\mu'_{i+1}(x_{i+1})\in\{p-2-x_{i+1},x_{i+1}\}$, hence $\mu'_i(x_i)\in\{x_i,x_i-1,x_i+1\}$; but the condition $i\in\cS(\mu')$ and the compatibility with $\mu$ imply that $\mu'_{i}(x_i)=x_i-1$. Since $r_i=0$,  $\mu'$ does not correspond to a genuine weight, giving a contradiction to the assumption.

(ii) 
The assumption together with Lemma \ref{lemma-oneweight} implies that each subset of $\cS(\sigma^c)$ corresponds to a genuine weight, whence the assertion.

(iii) We may assume $\sigma\neq \sigma^c$, i.e. $\cS(\sigma^c)\neq \emptyset$, therefore $\tau$ and $\tau^c$ are non-isomorphic.   Let $\mu,\mu^c\in\cI(x_0,\cdots,x_{f-1})$ be the elements corresponding to $\tau,\tau^c$. We can write $\mu^c=\nu\circ\mu$ for some (unique) $\nu:=(\nu_i(x_i))_i$ with $\nu_i(x_i)\in\Z\pm x_i$. We claim that  $\nu\in\cI(x_0,\cdots,x_{f-1})$.  First, we check that $\nu_i(x_i)\in\{x_i,x_i\pm1,p-2-x_i,p-2-x_i\pm1\}$. If $i\in \cS(\mu)$, then $i\notin \cS(\mu^c)$, hence
\[\mu_i(x_i)\in\{x_i\pm1, p-2-x_i\pm1\},\ \ \mu^c_i(x_i)\in\{x_i, p-2-x_i\}\]
so that $\mu^c_i(x_i)=\nu_i(\mu_i(x_i))$ with $\nu_i(x_i)\in\{x_i\pm1,p-2-x_i\pm1\}$. Similar statement holds if $i\in\cS(\mu^c)$. If $i\notin \cS(\lambda^c)$, then
\[\mu_i(x_i), \mu^c_i(x_i)\in \{x_i,p-2-x_i\}\]
so that $\mu^c_i(x_i)=\nu_i(\mu_i(x_i))$ with $\nu_i(x_i)\in\{x_i,p-2-x_i\}$.  Next, we verify that the relations (ii), (iii) in the definition of $\cI(x_0,\cdots,x_{f-1})$ are satisfied. This is a direct but tedious check and we only give details for  some special cases. For example, in the case when $i\in\cS(\mu)$ (hence $i\notin\cS(\mu^c)$),  we have $\mu_i^c(x_i)\in\{x_i,p-2-x_i\}$ and the following possibilities:
\begin{enumerate}
\item[--] If $\mu_{i}(x_i)\in\{x_i\pm1\}$ and $\mu_i^c(x_i)=x_i$, then $\nu_i(x_i)\in\{x_{i}\pm1\}$; on the other hand, by the definition of $\cI(x_0,\cdots,x_{f-1})$  we have
\[\mu_{i+1}(x_{i+1}),\ \mu_{i+1}^c(x_{i+1})\in\{x_{i+1},p-2-x_{i+1}\}\]
so that $\nu_{i+1}(x_{i+1})\in\{x_{i+1},p-2-x_{i+1}\}$ which verifies the relation required in the definition of $\cI(x_0,\cdots,x_{f-1})$.
\item[--] If $\mu_i(x_i)\in\{x_i\pm1\}$ and $\mu_i^c(x_i)=p-2-x_i$, then $\nu_i(x_i)\in\{p-2-x_i\pm1\}$; on the other hand,  by the definition of $\cI(x_0,\cdots,x_{f-1})$  we have
\[\mu_{i+1}(x_{i+1})\in\{x_{i+1},p-2-x_{i+1}\},\ \mu_{i+1}^c(x_{i+1})\in\{x_{i+1}\pm1,p-2-x_{i+1}\pm1\}\]
so that $\nu_{i+1}(x_{i+1})\in \{x_{i+1}\pm1,p-2-x_{i+1}\pm1\}$ which verifies the relation required in the definition of $\cI(x_0,\cdots,x_{f-1})$.

\item[--] If $\mu_i(x_i)\in \{p-2-x_i\pm1\}$, we claim that $\mu_i^c(x_i)=x_i$ and so $\nu_i(x_i)\in\{p-2-x_i\pm1\}$. In fact,   otherwise   $\mu_i^c(x_i)=p-2-x_i$, then by   the definition of $\cI(x_0,\cdots,x_{f-1})$
\[\mu_{i+1}(x_{i+1}),\ \mu_{i+1}^c(x_{i+1})\in\{x_{i+1}\pm1,p-2-x_{i+1}\pm1\},\]
so that $i+1\in\cS(\mu)\cap \cS(\mu^c)$ which is impossible. Consequently, $\mu_{i+1}^c(x_{i+1})\in\{x_{i+1},p-2-x_{i+1}\}$ and  so $\nu_{i+1}(x_{i+1})\in\{x_{i+1}\pm1,p-2-x_{i+1}\pm1\}$,
which verifies the relation required in the definition of $\cI(x_0,\cdots,x_{f-1})$.
\end{enumerate}

Having checked that $\mu^c=\nu\circ\mu$ for some $\nu\in\cI(x_0,\cdots,x_{f-1})$, we deduce from \cite[Lem. 3.2]{BP} that $\tau^c$ appears in $\mathrm{inj}_{\Gamma}{\tau}$. Since $\tau$ is regular by (i),  the representation $I(\tau,\tau^c)$ exists; see the discussion after Lemma \ref{lemma-oneweight}.
 Moreover, from the explicit description of $\nu$ we see that $\cS(\nu)=\cS(\lambda^c)$.

More generally, we claim that if $\tau'$ is any irreducible constituent of $I(\sigma,\sigma^c)$, then   $\tau'$ is an irreducible constituent of $\mathrm{inj}_{\Gamma}{\tau}$, i.e. we can write $\mu'=\nu'\circ\mu$ for some $\nu'\in\cI(x_0,\cdots,x_{f-1})$ where $\mu'\in \cI(x_0,\cdots,x_{f-1})$  corresponds to $\tau'$ inside $\mathrm{inj}_{\Gamma}\sigma$. In fact, the above check still works if $i\notin \cS(\mu)\cap \cS(\mu')$, and if $i\in \cS(\mu)\cap\cS(\mu')$, the compatibility with   $\lambda^c$ implies that
\[\mu'_i(x_i)=\nu'_i(\mu_i(x_i)),\ \ \ \mathrm{with}\ \nu'_i(x_i)\in\{x_i,p-2-x_i\}.\]
The relations (ii), (iii) in the definition of $\cI(x_0,\cdots,x_{f-1})$ are verified in the same way as above. This proves the claim. Moreover, we obtain that $\cS(\nu')=(\cS(\mu)\cup\cS(\mu'))\backslash(\cS(\mu)\cap \cS(\mu'))$, which is in particular contained in $\cS(\nu)=\cS(\lambda^c)$.

To finish the proof, we check that $\tau'$  is an irreducible constituent of $I(\tau,\tau^c)$. This amounts to check the compatibility between  $\nu'$ and $\nu$ by \cite[Cor. 4.11]{BP}, which is an easy exercise.
\end{proof}

\subsection{Tame types and their lattices}\label{section-tame}

We call {\em tame types} the irreducible $E$-representations of $\GL_2(\OC_{L})$ that arise by inflation from an irreducible $E$-representation of $\GL_2(\F_q).$ These representations are either principal series, cuspidal, one-dimensional, or twist of the Steinberg representations. In this paper, we only consider the principal series types and the cuspidal types, and all tame types occurring in the following will be assumed to be of this kind. We choose a $\GL_2(\OC_{L})$-invariant lattice $V^\circ$ of a tame type $V$ and consider its reduction $\overline{V^\circ}:=V^{\circ}/\varpi_EV^{\circ}$ whose semi-simplification, denoted by $\overline{V}$, does not depend on the choice of the lattice. We write $\JH(\overline{V})$ for the set of Jordan-H\"older factors of $\overline{V}$. We recall the following well-known results on tame types.

\begin{proposition} \label{prop-tame-1}
Let $V$ be a tame type. Then

(i) $V$ is residually multiplicity free, i.e., each element of  $\JH(\overline{V})$ appears exactly once in $\overline{V}$.

(ii) For any Jordan-H\"older factor $\sigma$ of $\overline{V},$ there is up to homothety a unique $\GL_2(\cO_L)$-stable $\OC_E$-lattice $(V_{\sigma})^\circ$ (resp. $(V^{\sigma})^\circ$) in $V$ such that the cosocle (resp. socle)  of its reduction is $\sigma.$

(iii) Let $\sigma\in\JH(\overline{V})$ and $(V_{\sigma})^{\circ}$ (resp. $(V^{\sigma})^{\circ}$) be as in (ii). If $\tau\in \JH(\overline{V})$ is  such that $\Ext^1_{\Gamma}(\tau,\sigma)\neq0$, then $\tau$ appears in the $1$-st layer of the cosocle filtration (resp. socle filtration) of $\overline{(V_{\sigma})^{\circ}}$ (resp. $\overline{(V^{\sigma})^{\circ}}$).
\end{proposition}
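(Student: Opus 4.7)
The plan is to handle the three parts in order, using classical explicit computations for reductions of Deligne--Lusztig representations of $\Gamma = \GL_2(\F_q)$ and a lattice-uniqueness argument for residually multiplicity-free types.

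For (i), I would reduce to the classification of tame types: up to twist a principal series type has the form $V = \Ind_B^{\Gamma}(\chi_1\otimes\chi_2)$ with $\chi_1\neq\chi_2$, while a cuspidal type $\Theta(\theta)$ corresponds to a regular character of the non-split torus. In both cases the semisimplification $\overline{V}$ is known by a theorem of Diamond (extending Jeyakumar for $f=1$): expanding the parametrising character in fundamental characters of $\F_q^{\times}$, the Jordan--H\"older factors of $\overline{V}$ are naturally indexed by subsets $J\subseteq\{0,\ldots,f-1\}$, each appearing with multiplicity exactly one. I would simply invoke this explicit description.

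For (ii), I would prove existence and uniqueness separately. Existence is constructive: starting from any $\GL_2(\cO_L)$-stable $\cO_E$-lattice $V^{\circ}\subseteq V$, if the cosocle of $\overline{V^{\circ}}$ differs from $\sigma$ I replace $V^{\circ}$ by the preimage in $V^{\circ}$ of a suitable proper sub-representation of $\overline{V^{\circ}}$; using the explicit description from (i), a finite sequence of such steps produces a lattice with cosocle $\sigma$. For uniqueness up to homothety, given two lattices $L_1, L_2\subseteq V$ both with reduction of cosocle $\sigma$, I rescale so that $L_1\subseteq L_2$ and $L_1\not\subseteq \varpi_E L_2$, and consider $I := \mathrm{im}(L_1\to\overline{L_2})$. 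Then $I$ is a nonzero quotient of $\overline{L_1}$ (since $\varpi_E L_1 \subseteq L_1\cap\varpi_E L_2$) and a nonzero subrepresentation of $\overline{L_2}$. Since $\sigma = \rcosoc(\overline{L_i})$ is irreducible, both $I$ and $\overline{L_2}/I$ (if the latter is nonzero) have cosocle $\sigma$; but $\sigma$ has multiplicity one in $\overline{V}$ by (i), so $\overline{L_2}/I = 0$, i.e.\ $I = \overline{L_2}$, and Nakayama gives $L_1 = L_2$. The socle case is dual.

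For (iii), set $W := \overline{(V_\sigma)^{\circ}}$ so that $\rcosoc(W) = \sigma$. Given $\tau\in\JH(\overline{V})$ with $\Ext^1_{\Gamma}(\tau,\sigma) \neq 0$, the assertion that $\tau$ appears in $\mathrm{gr}^1 W$ amounts to exhibiting a non-split quotient $W \twoheadrightarrow E$ of the form $0 \to \sigma \to E \to \tau \to 0$. If no such quotient existed, the nonzero Ext class, realized inside $\overline{V}$, would allow me to construct a proper sublattice $L \subsetneq (V_\sigma)^{\circ}$ whose reduction retains $\sigma$ as cosocle but places $\tau$ in a strictly different position -- contradicting the uniqueness in (ii). The socle case is dual. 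The main obstacle throughout is the uniqueness clause in (ii): it is where the residual multiplicity-freeness from (i) does essential work, while the rest of the argument reduces to citation or short combinatorial checks.
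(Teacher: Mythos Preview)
Your treatment of (i) and (ii) is correct and in the same spirit as the paper's: the paper simply cites \cite[Lem.~3.1.1]{EGS} and \cite[Lem.~4.1.1]{EGS}, while you spell out (for (ii)) the standard Nakayama argument that those citations encode.

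Part (iii), however, has a genuine gap. First, the extension direction is confused: if $W=\overline{(V_\sigma)^\circ}$ has cosocle $\sigma$, then every nonzero quotient of $W$ also has cosocle $\sigma$, so a length-two quotient $E$ of $W$ must sit in a sequence $0\to\tau\to E\to\sigma\to0$, i.e.\ a class in $\Ext^1_\Gamma(\sigma,\tau)$, not $\Ext^1_\Gamma(\tau,\sigma)$ as you wrote. You therefore need the symmetry $\Ext^1_\Gamma(\tau,\sigma)\neq0\Leftrightarrow\Ext^1_\Gamma(\sigma,\tau)\neq0$, which is not automatic and is drawn from \cite[Cor.~5.6]{BP} (this uses $p\geq3$ when $f=1$). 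Second, and more seriously, your proposed contradiction---``construct a proper sublattice $L\subsetneq(V_\sigma)^\circ$ whose reduction still has cosocle $\sigma$ but places $\tau$ in a different position''---is not a proof as it stands: uniqueness in (ii) says precisely that any such $L$ is homothetic to $(V_\sigma)^\circ$ and hence has \emph{identical} reduction, so the entire content of the argument lies in actually constructing such an $L$, and you give no construction. It is not clear how the bare hypothesis $\Ext^1_\Gamma(\tau,\sigma)\neq0$ lets you ``realise the Ext class inside $\overline{V}$'' without already knowing what you are trying to prove.

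The paper's argument for (iii) avoids lattice manipulations entirely. After invoking the Ext-symmetry above and dualising to the socle case $(V^\sigma)^\circ$, one embeds $\overline{(V^\sigma)^\circ}$ into $\mathrm{inj}_\Gamma\sigma$. Since $\tau$ occurs (exactly once, by (i)) in $\overline{(V^\sigma)^\circ}$, the representation $I(\sigma,\tau)$---the unique subrepresentation of $\mathrm{inj}_\Gamma\sigma$ with socle $\sigma$, cosocle $\tau$, and $\sigma$ of multiplicity one (see \cite[Cor.~3.12]{BP})---is contained in $\overline{(V^\sigma)^\circ}$. But the non-split extension $0\to\sigma\to E\to\tau\to0$ coming from $\Ext^1_\Gamma(\tau,\sigma)\neq0$ already has these properties, so by uniqueness $I(\sigma,\tau)=E$ has length two, forcing $\tau\in\mathrm{gr}_1\overline{(V^\sigma)^\circ}$.
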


\begin{proof}
(i) is  \cite[Lem. 3.1.1]{EGS}. (ii) is \cite[Lem. 4.1.1]{EGS}.   (iii)  follows from \cite[Thm. 5.1.1]{EGS}, but since our notations here are slightly different, we explain its proof. First note that, by \cite[Cor. 5.6]{BP} (using the assumption $p\geq 3$ when $f=1$), the condition $\Ext^1_{\Gamma}(\tau,\sigma)\neq0$
is equivalent to $\Ext^1_{\Gamma}(\sigma,\tau)\neq0$; if this holds, then they are both 1-dimensional over $\F$ and we have automatically $\sigma\neq \tau$. Moreover, by taking dual and using \cite[Lem. 3.1.1]{EGS}, it suffices to prove the assertion for $(V^{\sigma})^{\circ}$, whose mod $\varpi_E$ reduction  has socle $\sigma$. We can embed $\overline{(V^{\sigma})^{\circ}}$ into $\mathrm{inj}_{\Gamma}\sigma$, hence $\tau$ appear in $\mathrm{inj}_{\Gamma}(\sigma)$ and the representation $I(\sigma,\tau)$ exists; see the discussion after Lemma \ref{lemma-oneweight}. Since $\tau$ appears in $\overline{(V^{\sigma})^{\circ}}$ exactly once, the representation $I(\sigma,\tau)$ is in fact contained in $\overline{(V^{\sigma})^{\circ}}$. We can write down $I(\sigma,\tau)$ explicitly:  by the uniqueness, $I(\sigma,\tau)$ is the unique non-split extension (class) of $\tau$ by $\sigma$, hence of length 2. This implies that $\tau$ appears in  $\mathrm{gr}_1\overline{(V^{\sigma})^{\circ}}$, otherwise $I(\sigma,\tau)$ would have length $\geq 3$.
\end{proof}

The proof of Proposition \ref{prop-tame-1}(iii) has the following consequence.
\begin{corollary}\label{cor-I(sigma,tau)}
Let $V$ be a tame type. For any $\sigma,\tau\in \JH(\overline{V})$, the representation $I(\sigma,\tau)$ exists and is a sub-representation of $\overline{(V^{\sigma})^{\circ}}$.
\end{corollary}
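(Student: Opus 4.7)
The plan is to exhibit $I(\sigma,\tau)$ as the image of a map from $\mathrm{inj}_{\Gamma}\tau$ landing inside $\overline{(V^{\sigma})^{\circ}}$, and then identify this image with $I(\sigma,\tau)$ by the uniqueness built into the definition. First I would note that, since $\overline{(V^{\sigma})^{\circ}}$ has the same semisimplification as $\overline{V}$ and the latter is multiplicity free by Proposition \ref{prop-tame-1}(i), the module $\overline{(V^{\sigma})^{\circ}}$ is itself multiplicity free; in particular $\JH(\overline{(V^{\sigma})^{\circ}})=\JH(\overline{V})$ contains $\tau$. By Proposition \ref{prop-tame-1}(ii) the socle of $\overline{(V^{\sigma})^{\circ}}$ is $\sigma$, so we may fix an embedding $\iota\colon \overline{(V^{\sigma})^{\circ}}\hookrightarrow \mathrm{inj}_{\Gamma}\sigma$. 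Since $\tau$ occurs in $\overline{(V^{\sigma})^{\circ}}$, it also occurs in $\mathrm{inj}_{\Gamma}\sigma$, which guarantees that $I(\sigma,\tau)$ is defined (following the discussion after Lemma \ref{lemma-oneweight}).

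Next I would exploit the fact that $\mathrm{inj}_{\Gamma}\tau$ is simultaneously a projective cover of $\tau$ for the finite group $\Gamma$, so that
\[
\dim_{\F}\Hom_{\Gamma}(\mathrm{inj}_{\Gamma}\tau,\overline{(V^{\sigma})^{\circ}})=[\overline{(V^{\sigma})^{\circ}}:\tau]=1.
\]
Pick a non-zero $\phi\colon \mathrm{inj}_{\Gamma}\tau\to\overline{(V^{\sigma})^{\circ}}$ and set $M:=\im(\phi)$. As a quotient of $\mathrm{inj}_{\Gamma}\tau$, the representation $M$ has cosocle $\tau$; as a non-zero subrepresentation of $\overline{(V^{\sigma})^{\circ}}$, its socle is contained in $\rsoc_{\Gamma}(\overline{(V^{\sigma})^{\circ}})=\sigma$, hence equals $\sigma$; and $M$ is multiplicity free because $\overline{(V^{\sigma})^{\circ}}$ is. In particular $\sigma$ occurs in $M$ exactly once.

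Finally, composing with $\iota$ gives a non-zero map $\iota\circ\phi\colon \mathrm{inj}_{\Gamma}\tau\to \mathrm{inj}_{\Gamma}\sigma$ whose image $\iota(M)$ has socle $\sigma$, cosocle $\tau$, and $\sigma$-multiplicity one. By the uniqueness built into the definition of $I(\sigma,\tau)$ (via \cite[Cor. 3.12]{BP}, applied with $\sigma$ regular, which is the standing situation when these results will be used), this forces $\iota(M)=I(\sigma,\tau)$. Hence $I(\sigma,\tau)$ exists and sits inside $\overline{(V^{\sigma})^{\circ}}$ as required. The only mildly subtle point is invoking the uniqueness of $I(\sigma,\tau)$, which rests on the regularity hypothesis on $\sigma$; once that is in place, the proof is a short diagram-chase together with the projective/injective property of $\mathrm{inj}_{\Gamma}\tau$ for the finite group $\Gamma$.
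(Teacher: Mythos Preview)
Your argument is correct and is essentially the same as the paper's: the corollary is extracted from the proof of Proposition~\ref{prop-tame-1}(iii), where one embeds $\overline{(V^{\sigma})^{\circ}}$ into $\mathrm{inj}_{\Gamma}\sigma$, uses that $\tau$ appears (so $I(\sigma,\tau)$ is defined), and then observes that multiplicity-freeness forces the image of a nonzero map $\mathrm{inj}_{\Gamma}\tau\to\overline{(V^{\sigma})^{\circ}}$ to be $I(\sigma,\tau)$. You have simply spelled out explicitly the projectivity step and the uniqueness via \cite[Cor.~3.12]{BP}, and correctly flagged that this invokes regularity of $\sigma$, which is the only context in which $I(\sigma,\tau)$ is defined and in which the result is applied.
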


\subsection{Serre weights}\label{section-Serreweights}

In this subsection, we prove a general fact about the set of Serre weights attached to a residual generic representation $\brho$.

From now on, we assume that  $L$ is unramified  over $\Q_p$. Let $f:=[L:\Q_p]$. Let $\brho:\Gal(\bQp/L)\ra \GL_2(\F)$ be a continuous representation. Assume that $\brho$ is \emph{generic} in the sense of \cite[\S11]{BP}, that is, $\brho|_{I(\bQp/L)}$ is isomorphic to one of the following two forms
\begin{enumerate}
\item $\matr{\omega_f^{\sum_{i=0}^{f-1}p^i(r_i+1)}}*01\otimes\eta$ with $0\leq r_i\leq p-3$ for each $i$, and not all $r_i$ equal to $0$ or equal to $p-3$;
\item $\matr{\omega_{2f}^{\sum_{i=0}^{f-1}p^i(r_i+1)}}00{\omega_{2f}^{p^f\sum_{i=0}^{f-1}p^i(r_i+1)}}\otimes\eta$ with $1\leq r_0\leq p-2$, and $0\leq r_i\leq p-3$ for $i>0$.
\end{enumerate}
where $\omega_f$ is the fundamental character of $I(\bQp/L)$ of level $f$ as in \cite[\S11]{BP}. Note that  there are no generic representations if $p=2$, and no reducible generic representations if $p\leq 3$.

To $\brho$ is associated a set of weights, called Serre weights and denoted by $\mathscr{D}(\brho)$ (see \cite[\S11]{BP} or \cite{BDJ}).  {Recall that $\brho$ is {\em tamely ramified} (or \emph{tame} for short)  if and only if it is either reducible split or irreducible.} The genericity of $\brho$ implies that the cardinality of $\mathscr{D}(\brho)$ is $2^f$ if $\brho$ is tame, and is $2^d$ for some $0\leq d\leq f-1$ if $\brho$ is reducible non-split, see \cite[\S11]{BP}. Moreover, if $\brho^{\rm ss}$ denotes the semi-simplification of $\brho$, then we always have $\mathscr{D}(\brho)\subseteq \mathscr{D}(\brho^{\rm ss})$. By \cite[Lem. 2.1.6]{EGS}\footnote{In \cite{EGS}, the genericity condition is the same as in \cite{BP}.}, since $\brho$ is generic, any weight $\sigma\in\mathscr{D}(\brho)$ is regular.

\begin{proposition}\label{prop-tame-2}
There is a tame type $V$ of $\GL_2(\OC_{L})$ such that $\JH(\overline{V})$ identifies with $\mathscr{D}(\overline{\rho}^{\rm ss})$. In particular, $|\JH(\overline{V})|=|\mathscr{D}(\overline{\rho}^{\rm ss})|=2^f.$
\end{proposition}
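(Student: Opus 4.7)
My plan is to split into the two tame cases for $\brho^{\rm ss}$ (reducible split or irreducible) and in each case write down an explicit tame type whose reduction matches the Buzzard--Diamond--Jarvis set of Serre weights. This is essentially a bookkeeping comparison between two well-known lists.

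First I would recall the explicit parametrisation of $\mathscr{D}(\brho^{\rm ss})$ coming from \cite[\S11]{BP}: under the genericity hypothesis, the weights in $\mathscr{D}(\brho^{\rm ss})$ are indexed by the subsets of $\{0,\dots,f-1\}$, so that $|\mathscr{D}(\brho^{\rm ss})|=2^f$, and each such weight is of the form $(\lambda_0(r_0),\dots,\lambda_{f-1}(r_{f-1}))\otimes\det^{e(\lambda)}$ for an explicit $\lambda\in\mathcal{I}(x_0,\dots,x_{f-1})$ (with the signs of $\lambda$ encoding the subset). This is the target list.

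Next I would produce the candidate type. In the reducible split case, $\brho^{\rm ss}|_{I_L}\cong\mathrm{diag}(\omega_f^{\sum p^i(r_i+1)},1)\otimes\eta$; take $V$ to be the principal series tame type inflated from $\mathrm{Ind}_{B(\F_q)}^{\GL_2(\F_q)}(\widetilde{\chi}_1\otimes\widetilde{\chi}_2)$, where $\widetilde{\chi}_i$ are the Teichm\"uller lifts of the characters of $\F_q^\times$ determined by the inertial data of $\brho^{\rm ss}$ (with an appropriate determinant twist by $\eta$). In the irreducible case, $\brho^{\rm ss}|_{I_L}\cong\omega_{2f}^{a}\oplus\omega_{2f}^{pa}$ with $a=\sum p^i(r_i+1)$; take $V$ to be the cuspidal tame type $\Theta(\widetilde{\omega}_{2f}^a)$ attached to the Teichm\"uller lift, again twisted appropriately. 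These are exactly the tame types whose inertial Langlands parameter matches $\brho^{\rm ss}|_{I_L}$.

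Finally I would identify $\JH(\overline{V})$ with $\mathscr{D}(\brho^{\rm ss})$. The Jordan--H\"older factors of the mod $p$ reduction of a principal series (resp.\ cuspidal) representation of $\GL_2(\F_q)$ are classically known (Diamond, Breuil--Paskunas), each appearing with multiplicity one (cf.\ Proposition~\ref{prop-tame-1}(i)), and can be written in the same $\mathcal{I}(x_0,\dots,x_{f-1})$-parametrisation. Comparing the two explicit lists term by term gives a bijection $\JH(\overline{V})\simeq\mathscr{D}(\brho^{\rm ss})$, and both sets have cardinality $2^f$.

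The main obstacle is purely combinatorial: matching the determinant twists and the $\pm1$ sign conventions in the two parametrisations so that the formal weights are the same and all of them are genuine (no $r_i$-coordinate leaves $[0,p-1]$). The genericity hypothesis on $\brho$, which forces $0\leq r_i\leq p-3$ away from the first embedding in the cuspidal case and analogous bounds in the principal series case, is precisely what guarantees both that none of the $2^f$ candidate weights degenerate and that they are pairwise non-isomorphic; once this is checked, the proposition follows.
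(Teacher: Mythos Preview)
Your proposal is correct and is precisely the argument carried out in the references the paper cites (\cite[Prop.~4.4]{Br14} and \cite{Dia07}); the paper itself gives no proof beyond pointing to those sources. One small slip: in the irreducible case the conjugate character is $\omega_{2f}^{p^f a}$, not $\omega_{2f}^{pa}$.
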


\begin{proof}
This is \cite[Prop. 4.4]{Br14} or \cite{Dia07}.
\end{proof}

The following result is inspired by \cite[Prop. 4.4]{Br14}  and generalises it.
\begin{proposition}\label{prop-JH=Serre}
  Let $\sigma,\tau$ be two  elements of $\mathscr{D}(\brho)$. Then  the representation $I(\sigma,\tau)$ exists and any irreducible constituent of $I(\sigma,\tau)$ is also an element of $\mathscr{D}(\brho)$.
\end{proposition}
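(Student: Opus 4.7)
The plan is to deduce existence from Corollary \ref{cor-I(sigma,tau)} (via a tame type of the right shape) and to control the Jordan--H\"older factors through the combinatorial machinery of Lemma \ref{lemma-auxiliary} together with the explicit description of $\mathscr{D}(\brho)$ from \cite[\S11]{BP}.

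First I would invoke Proposition \ref{prop-tame-2} to choose a tame type $V$ with $\JH(\overline{V}) = \mathscr{D}(\brho^{\mathrm{ss}})$, a set of cardinality $2^f$. Since $\mathscr{D}(\brho)\subseteq \mathscr{D}(\brho^{\mathrm{ss}})$, both $\sigma$ and $\tau$ already lie in $\JH(\overline{V})$, so Corollary \ref{cor-I(sigma,tau)} immediately gives that $I(\sigma,\tau)$ exists and embeds into $\overline{(V^\sigma)^\circ}$. This disposes of the existence statement in one stroke, and gives an ambient representation in which the Jordan--H\"older factors live.

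Next I would handle the tame case (i.e.\ $\brho$ reducible split or irreducible): here $\mathscr{D}(\brho)=\mathscr{D}(\brho^{\mathrm{ss}})=\JH(\overline{V})$, so the embedding $I(\sigma,\tau)\hookrightarrow \overline{(V^\sigma)^\circ}$ forces every constituent of $I(\sigma,\tau)$ to lie in $\mathscr{D}(\brho)$ with no further work.

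For the reducible non-split case I would bring in Lemma \ref{lemma-auxiliary}. Pick the weight $\sigma^c\in\JH(\overline{V})$ corresponding to the full index set under the parametrization by elements of $\cI(x_0,\ldots,x_{f-1})$, so that $|\cS(\sigma^c)|=f$. Since the $2^f$ elements of $\JH(\overline{V})$ all occur inside $\overline{(V^\sigma)^\circ}$ and are constituents of $\mathrm{inj}_\Gamma \sigma$ parametrised by distinct subsets of $\cS(\sigma^c)$, they must all lie in $I(\sigma,\sigma^c)$, so the hypothesis of Lemma \ref{lemma-auxiliary} is met. Applying parts (ii) and (iii) of that lemma, the Jordan--H\"older factors of $I(\sigma,\tau)$ are parametrised by the subsets $\cS'\subseteq \cS(\tau)$, and each such constituent already belongs to $\JH(\overline{V})=\mathscr{D}(\brho^{\mathrm{ss}})$. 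I would then translate this $\sigma$-relative bookkeeping into the fixed reference parametrization of \cite[\S11]{BP}: there $\mathscr{D}(\brho)$ is cut out of $\mathscr{D}(\brho^{\mathrm{ss}})$ by the condition that the indexing subset lie in a distinguished set $\cJ\subseteq\{0,\ldots,f-1\}$ encoding the extension class of $\brho$. The constituents of $I(\sigma,\tau)$ correspond to subsets interpolating (in the symmetric-difference sense) between those attached to $\sigma$ and $\tau$, so they are contained in $\cS_\sigma\cup \cS_\tau\subseteq \cJ$, and thus remain in $\mathscr{D}(\brho)$.

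The main obstacle is precisely this last translation step: the Breuil--Pa\v{s}k\={u}nas parametrization is relative to a fixed anchor weight, while the parametrization coming from Lemma \ref{lemma-auxiliary} is relative to $\sigma$, and the two are related by an involution on subsets (composing with the element $\mu\in\cI(x_0,\ldots,x_{f-1})$ attached to $\sigma$). Verifying that this involution preserves the inclusion into $\cJ$---equivalently, that the set $\mathscr{D}(\brho)$ is stable under the symmetric-difference operations produced by Lemma \ref{lemma-auxiliary}(ii)---requires a careful case-by-case unravelling of the genericity conditions in \cite[\S11]{BP}, and is where the bulk of the technical work will sit.
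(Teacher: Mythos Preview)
Your treatment of existence and of the tame case is exactly what the paper does: pick a tame type $V$ with $\JH(\overline V)=\mathscr{D}(\brho^{\rm ss})$, apply Corollary \ref{cor-I(sigma,tau)}, and in the tame case conclude because $\mathscr{D}(\brho)=\JH(\overline V)$.

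For the reducible non-split case your route diverges from the paper's, and the divergence is precisely at the point you flag as the obstacle. You choose $\sigma^c$ with $|\cS(\sigma^c)|=f$, so that $I(\sigma,\sigma^c)$ sees all of $\mathscr{D}(\brho^{\rm ss})$, and then propose to cut back down to $\mathscr{D}(\brho)$ by tracking how the subset-parametrisation behaves under change of anchor. This can be made to work (your symmetric-difference heuristic is correct once the parametrisations are lined up), but it costs exactly the ``careful case-by-case unravelling'' you anticipate.

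The paper sidesteps this entirely. Instead of taking $\sigma^c$ with full index set, it invokes \cite[Prop.~4.4(ii)]{Br14} (with $J^{\min}=\emptyset$, $J^{\max}=\delta(J_{\brho})$) to produce a principal series whose socle $\sigma_\emptyset$ and a weight $\sigma^c_\emptyset$ with $\cS(\sigma^c_\emptyset)=\delta(J_{\brho})$ satisfy: the constituents of $I(\sigma_\emptyset,\sigma^c_\emptyset)$ are \emph{exactly} $\mathscr{D}(\brho)$, by cardinality $2^{|J_{\brho}|}$. This already puts the hypothesis of Lemma \ref{lemma-auxiliary} in place with the ``correct'' cosocle. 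Part (iii) of that lemma then says $I(\sigma,\sigma^c)$ has the same semisimplification as $I(\sigma_\emptyset,\sigma^c_\emptyset)$, namely $\mathscr{D}(\brho)$, for every $\sigma\in\mathscr{D}(\brho)$; since $\tau$ is among these constituents, $I(\sigma,\tau)\subseteq I(\sigma,\sigma^c)$ and we are done. No change-of-anchor bookkeeping is needed, because \cite[Prop.~4.4(ii)]{Br14} has already packaged the description of $\mathscr{D}(\brho)$ in the form Lemma \ref{lemma-auxiliary} consumes directly.

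So: your plan is sound, but the paper's choice of $\sigma^c_\emptyset$ adapted to $J_{\brho}$ (rather than the full index set) turns your anticipated technical verification into a one-line citation.
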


\begin{proof}
If $\brho$ is tame, we take a tame type $V$ such that $\JH(\overline{V})=\mathscr{D}(\brho)$ by Proposition \ref{prop-tame-2}. The  result follows from the proof of Corollary \ref{cor-I(sigma,tau)}.

If $\brho$ is reducible non-split, the existence of $I(\sigma,\tau)$ still follows from Proposition \ref{prop-tame-2} and Corollary \ref{cor-I(sigma,tau)}  by noting that  $\mathscr{D}(\brho)\subset\mathscr{D}(\brho^{\rm ss})$.
The proof of the second assertion is a little subtler.
  First, using the theory of Fontaine-Laffaille module, we attach to $\brho$  a certain subset $J_{\brho}$ of $\cS$ which measures how far $\brho$ is from splitting, see \cite[\S4]{Br14} (we do not need the precise definition here). Remark that the cardinality of  $\mathscr{D}(\brho)$ is exactly $2^{|J_{\brho}|}$ (see \cite[\S4]{Br14}).  Applying \cite[Prop. 4.4(ii)]{Br14} to $J^{\rm min}=\emptyset$ and $J^{\rm max}=\delta(J_{\brho})$ (where $\delta$ is as in \emph{loc. cit.}), we find a (unique) tame character $\chi: I\ra \cO_E^{\times}$ such that the irreducible constituents of $\Ind_{B}^{\Gamma}(\chi^s)$ which are Serre weights of $\brho$ are exactly the weights parametrised by subsets of $\delta(J_{\brho})$ (in the sense of \cite[\S2]{BP}). In other words, if we write $\sigma_{\emptyset}$ for the socle and $\sigma^c_{\emptyset}$ for the weight corresponding to $J^{\rm max}$, then the set $\mathscr{D}(\brho)$ is exactly the set of irreducible constituents of $I({\sigma}_{\emptyset},{\sigma}_{\emptyset}^c)$, because they have the same cardinality. Moreover, the assumption of Lemma \ref{lemma-auxiliary} is satisfied and applying it we deduce that the set $\mathscr{D}(\brho)$ is also the set of irreducible constituents of $I(\sigma,\sigma^c)$ for any weight $\sigma\in\mathscr{D}(\brho)$, where $\sigma^c$ is defined as in Lemma \ref{lemma-auxiliary} relative to $J^{\rm max}$. Since $\tau$ appears in $I(\sigma,\sigma^{c})$, $I(\sigma,\tau)$  is a sub-representation of $I(\sigma,\sigma^{c})$ and the result follows.
\end{proof}

\subsection{The construction of Breuil-Pa\v{s}k\=unas}

Let $\brho$ be as in the previous subsection. Let $D_0(\brho)$ be the finite dimensional $\Gamma$-representation attached to $\brho$ in \cite[\S13]{BP}, that is, $D_0(\brho)$ is the largest  $\F$-representation of $\Gamma$ such that:
\begin{enumerate}
\item[(a)] $\rsoc_{\Gamma}D_0(\brho)=\bigoplus_{\sigma\in\mathscr{D}(\brho)}\sigma$

\item[(b)] any weight of $\mathscr{D}(\brho)$ appears exactly once in $D_0(\brho)$.
\end{enumerate}
We have a $\Gamma$-equivariant decomposition  $D_0(\brho)=\bigoplus_{\sigma\in\mathscr{D}(\brho)}D_{0,\sigma}(\brho)$ by \cite[Prop. 13.1]{BP}, with each $D_{0,\sigma}(\brho)$ satisfying $\rsoc_{\Gamma}D_{0,\sigma}(\brho)=\sigma$. Moreover, $D_0(\brho)$ is multiplicity free by \cite[Cor. 13.5]{BP}. Consequently, $D_{0,\sigma}(\brho)$ is a sub-representation of $A_{\sigma}$  as defined in the beginning of \S\ref{subsection-A} (well-defined since $\sigma$ is regular by \cite[Lem. 2.1.6]{EGS}).

\begin{lemma}\label{lemma-Ext-tau}
Fix a  weight $\sigma\in \mathscr{D}(\brho)$ and let $\tau$ be any weight non-isomorphic to $\sigma$.   If $\Ext^1_{\Gamma}(\tau,D_{0,\sigma}(\brho))\neq0$, then $\tau\in\mathscr{D}(\brho)$. If this is  the case,  then the inclusion $\sigma\hookrightarrow D_{0,\sigma}(\brho)$ induces an isomorphism \[\Ext^1_{\Gamma}(\tau,\sigma)\simto\Ext^1_{\Gamma}(\tau,D_{0,\sigma}(\brho)).\]
\end{lemma}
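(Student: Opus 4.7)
The plan is to prove the two assertions separately: the first by exploiting the maximality of $D_0(\brho)$, and the second by applying $\Hom_\Gamma(\tau,-)$ to the short exact sequence $0\to\sigma\to D_{0,\sigma}(\brho)\to D_{0,\sigma}(\brho)/\sigma\to 0$ and then controlling the pushforward via the structural results on $A_\sigma$ and $I(\sigma,\tau)$ proved earlier in the section.

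For the first assertion, I represent a non-zero class in $\Ext^1_\Gamma(\tau,D_{0,\sigma}(\brho))$ by a non-split extension $0\to D_{0,\sigma}(\brho)\to M\to\tau\to 0$. Since $\tau\ncong\sigma$, non-splitness forces $\rsoc_\Gamma M=\sigma$. Setting $\tilde M := M\oplus\bigoplus_{\sigma'\in\mathscr{D}(\brho),\,\sigma'\neq\sigma}D_{0,\sigma'}(\brho)$ produces a representation strictly containing $D_0(\brho)$ with socle $\bigoplus_{\sigma'\in\mathscr{D}(\brho)}\sigma'$, so condition (a) holds. If $\tau\notin\mathscr{D}(\brho)$, then every Serre weight appears in $\tilde M$ with the same multiplicity as in $D_0(\brho)$, giving condition (b) and contradicting maximality. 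Hence $\tau\in\mathscr{D}(\brho)$.

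For the second assertion, the long exact sequence of $\Hom_\Gamma(\tau,-)$ kills both $\Hom(\tau,\sigma)$ and $\Hom(\tau,D_{0,\sigma}(\brho))$ because $\rsoc_\Gamma D_{0,\sigma}(\brho)=\sigma\ncong\tau$. The term $\Hom(\tau,D_{0,\sigma}(\brho)/\sigma)$ also vanishes: otherwise $\tau$ would be a JH constituent of $D_{0,\sigma}(\brho)$, contradicting the multiplicity-freeness of $D_0(\brho)=\bigoplus_{\sigma'\in\mathscr{D}(\brho)}D_{0,\sigma'}(\brho)$ together with $\tau\in\rsoc D_{0,\tau}(\brho)$. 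This yields injectivity $\Ext^1(\tau,\sigma)\hookrightarrow\Ext^1(\tau,D_{0,\sigma}(\brho))$. For surjectivity, take a non-zero class represented by $M$. Then $\rsoc M=\sigma$ embeds $M$ into the injective envelope $R_\sigma$, and using Proposition~\ref{prop-socleB} the quotient $M/(M\cap A_\sigma)$, being a quotient of $M/D_{0,\sigma}(\brho)\cong\tau$, embeds into $R_\sigma/A_\sigma$ whose socle is $\sigma^{\oplus f}$; since $\tau\ncong\sigma$, this quotient vanishes and $M\subseteq A_\sigma$. Proposition~\ref{prop-JH=Serre} then provides $I(\sigma,\tau)\subseteq R_\sigma$ with all JH constituents in $\mathscr{D}(\brho)$, and since $I(\sigma,\tau)$ is multiplicity-free it sits inside $A_\sigma$ by Proposition~\ref{prop-A-multone}. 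The multiplicity-freeness of $D_0(\brho)$ forces $\JH(D_{0,\sigma}(\brho))\cap\mathscr{D}(\brho)=\{\sigma\}$, whence $I(\sigma,\tau)\cap D_{0,\sigma}(\brho)=\sigma$ and $I(\sigma,\tau)\cap M\in\{\sigma,I(\sigma,\tau)\}$ (using that $\tau$ is the unique cosocle constituent of multiplicity-free $I(\sigma,\tau)$, so any sub containing $\tau$ as a JH constituent must be all of $I(\sigma,\tau)$). The case $I(\sigma,\tau)\cap M=\sigma$ would make $\tau$ appear twice in $M+I(\sigma,\tau)\subseteq A_\sigma$, violating multiplicity-freeness; hence $I(\sigma,\tau)\subseteq M$. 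The induced inclusion $I(\sigma,\tau)/\sigma\hookrightarrow M/D_{0,\sigma}(\brho)\cong\tau$ then forces $I(\sigma,\tau)$ to have length two, producing a section $\tau\hookrightarrow M/\sigma$ that splits the pushforward extension and shows $[M]$ lies in the image of $\Ext^1(\tau,\sigma)$. When $\Ext^1(\tau,\sigma)=0$ the same argument yields a contradiction, so no non-zero $[M]$ exists and $\Ext^1(\tau,D_{0,\sigma}(\brho))=0$, still consistent with the claimed isomorphism.

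The main obstacle is the surjectivity step: forcing both $M\subseteq A_\sigma$ and $I(\sigma,\tau)\subseteq M$ requires the delicate combination of Propositions~\ref{prop-socleB}, \ref{prop-A-multone} and \ref{prop-JH=Serre} with the multiplicity-freeness of $D_0(\brho)$. The purely Ext-theoretic long exact sequence is insufficient on its own; it is the interplay between the multiplicity-free structures of $A_\sigma$, $I(\sigma,\tau)$ and $D_0(\brho)$ that forces the map $\Ext^1(\tau,D_{0,\sigma}(\brho))\to\Ext^1(\tau,D_{0,\sigma}(\brho)/\sigma)$ to vanish.
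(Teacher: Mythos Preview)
Your proof is correct and follows essentially the same strategy as the paper: maximality of $D_0(\brho)$ for the first assertion, and for surjectivity embedding $M$ into $R_\sigma$, showing $M\subseteq A_\sigma$, and deducing that $I(\sigma,\tau)$ has length~$2$. Two minor points: the paper obtains $M\subseteq A_\sigma$ more directly by observing that $M$ is multiplicity-free (since $\tau\notin\JH(D_{0,\sigma}(\brho))$) and invoking Proposition~\ref{prop-A-multone}, rather than via Proposition~\ref{prop-socleB}; and your stated dichotomy $I(\sigma,\tau)\cap M\in\{\sigma,I(\sigma,\tau)\}$ is not fully justified by the parenthetical, but your multiplicity-two contradiction in $A_\sigma$ actually rules out \emph{any} intersection not containing $\tau$ as a constituent, so the conclusion $I(\sigma,\tau)\subseteq M$ still follows.
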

\begin{proof}
Assume $\Ext^1_{\Gamma}(\tau,D_{0,\sigma}(\brho))\neq0$ and let $M$ be a \emph{non-split} $\Gamma$-extension $0\ra D_{0,\sigma}(\brho)\ra M\ra \tau\ra0$. Then  $\rsoc_{\Gamma}(M)=\rsoc_{\Gamma}(D_{0,\sigma}(\brho))=\sigma$. If $\tau\notin \mathscr{D}(\brho)$, then the representation
\[M\bigoplus \bigl(\bigoplus_{\sigma'\in\mathscr{D}(\brho),\sigma'\neq \sigma}D_{0,\sigma'}(\brho)\bigr)\]
would be strictly larger than $D_0(\brho)$ and verifies the conditions (a), (b) above, contradicting the maximality of $D_0(\brho)$.

We have $\Hom_{\Gamma}(\tau,D_{0,\sigma}(\brho)/\sigma)=0$ as $\tau\in \mathscr{D}(\brho)$, hence the natural morphism $\Ext^1_{\Gamma}(\tau,\sigma)\ra \Ext^1_{\Gamma}(\tau,D_{0,\sigma}(\brho))$ is injective. To show the surjectivity, we need to show that any non-split extension $M$ arises from the pushout of some extension in $\Ext^1_{\Gamma}(\tau,\sigma)$. Since $\rsoc_{\Gamma}M=\rsoc_{\Gamma}D_{0,\sigma}(\brho)$, $M$ can be embedded into $R_{\sigma}$, the injective envelope of $\sigma$ in $\Rep_{\Gamma}$ (noting that the genericity of $\brho$ implies  $\dim_{\F}\sigma\geq2$ for any $\sigma\in\mathscr{D}(\brho)$). Moreover, because $\tau$ is an element of $\mathscr{D}(\brho)$ and distinct from $\sigma$, it does not appear in $D_{0,\sigma}(\brho)$ so that $M$ is  multiplicity free. This shows, using notations in \S\ref{subsection-A}, that $M$ is a sub-representation of $A_{\sigma}$. Consequently, the representation $I(\sigma,\tau)$ is a  sub-representation of $A_{\sigma}$. Since all the irreducible constituents  of $I(\sigma,\tau)$ are Serre weights of $\brho$ by Proposition \ref{prop-JH=Serre}, while by construction only $\sigma$ and $\tau$ are, we obtain that $I(\sigma,\tau)$ is of length 2, i.e., we have $0\ra \sigma\ra I(\sigma,\tau)\ra \tau\ra0$.
\end{proof}

 {From now on, we assume that $\brho$ is tame}.

\begin{lemma}\label{lemma-Ext-sigma}
For each $\sigma\in\mathscr{D}(\brho)$:

(i)  $D_{0,\sigma}(\brho)$ is an alternative sub-representation of $A_{\sigma}$;

(ii) $\Ext^1_{\Gamma}(\sigma,D_{0,\sigma}(\brho))=0$.
\end{lemma}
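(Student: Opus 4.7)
Plan. My strategy is to prove (i) from the explicit description of $\mathscr{D}(\brho)$ for tame $\brho$ combined with Lemma \ref{lemma-Ext-tau}, and then derive (ii) by feeding (i) into Lemma \ref{lemma-alternative} and Proposition \ref{prop-criteria} to force a contradiction with the multiplicity-freeness of $A_\sigma$ (Proposition \ref{prop-A-multone}).

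For (i), I first recall that $D_{0,\sigma}(\brho)\subseteq A_\sigma$, as already noted in the paragraph preceding the lemma (since $D_0(\brho)$ is multiplicity free and $A_\sigma$ is the largest multiplicity-free sub-representation of $R_\sigma$). To establish the alternative condition I analyse $\mathrm{gr}_1 D_{0,\sigma}(\brho)$: any weight $\tau$ appearing there satisfies $\tau\neq \sigma$ and $\Ext^1_\Gamma(\tau,\sigma)\neq 0$, hence belongs to $\mathscr{D}(\brho)$ by Lemma \ref{lemma-Ext-tau}. The key input is then the combinatorial fact that, for tame generic $\brho$ and each $i\in\cS$, at most one of $\mu_i^{+}(\sigma)$ and $\mu_i^{-}(\sigma)$ is a Serre weight of $\brho$. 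This is read off from the description of $\mathscr{D}(\brho)$ in \cite[\S11]{BP}: the $f$ Serre weights of $\brho$ at ``Hamming distance $1$'' from $\sigma$ correspond to the $f$ possible single-coordinate flips, each of which realises exactly one of the two sign choices at the relevant coordinate.

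For (ii), I argue by contradiction. Suppose $\Ext^1_\Gamma(\sigma,D_{0,\sigma}(\brho))\neq 0$ and pick a non-split extension
\[0\ra D_{0,\sigma}(\brho)\ra M\ra \sigma\ra 0.\]
Since $D_{0,\sigma}(\brho)$ has socle $\sigma$ and the extension is non-split, $\rsoc_\Gamma M=\sigma$, so $M$ embeds into $R_\sigma$. I next verify that $M$ is alternative: the sequence $0\ra D_{0,\sigma}(\brho)/\sigma\ra M/\sigma\ra\sigma\ra 0$ shows that $\rsoc(M/\sigma)$ is an extension of a sub-representation of $\sigma$ by $\rsoc(D_{0,\sigma}(\brho)/\sigma)=\mathrm{gr}_1 D_{0,\sigma}(\brho)$, so the constituents of $\mathrm{gr}_1 M$ are either constituents of $\mathrm{gr}_1 D_{0,\sigma}(\brho)$ or are isomorphic to $\sigma$. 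Since $\sigma$ is distinct from every $\mu_i^{\pm}(\sigma)$ and $\mathrm{gr}_1 D_{0,\sigma}(\brho)$ is alternative by (i), $M$ is alternative as well. Lemma \ref{lemma-alternative} then yields $M\cap A'_\sigma\subseteq A_\sigma$, and Proposition \ref{prop-criteria} gives $M\subseteq A_\sigma$; but $M$ contains $\sigma$ with multiplicity at least $2$, contradicting Proposition \ref{prop-A-multone}.

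The main obstacle is the combinatorial claim in (i), namely verifying that at most one of $\mu_i^{\pm}(\sigma)$ is a Serre weight of $\brho$ for each $i$. This is a concrete but delicate check inside the $\cI(x_0,\ldots,x_{f-1})$-parametrization recalled in \S\ref{section-Serreweights}, and it is precisely the point where tameness is decisive: in the reducible non-split case one Serre weight can acquire both ``$+$'' and ``$-$'' neighbours at the same coordinate, which is why the argument (and the main theorem) must be restricted to tame $\brho$.
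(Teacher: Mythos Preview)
Your argument for (ii) is correct and matches the paper's: embed a non-split extension $M$ into $R_\sigma$, observe that adjoining $\sigma$ on top cannot spoil the alternative condition (since $\sigma\ncong\mu_i^\pm(\sigma)$), then apply Lemma~\ref{lemma-alternative} and Proposition~\ref{prop-criteria} (the paper uses Proposition~\ref{prop-Q'}, which suffices) to force $M\subseteq A_\sigma$, contradicting multiplicity-freeness.

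Your argument for (i), however, has a genuine gap. You claim that every $\tau$ in $\mathrm{gr}_1 D_{0,\sigma}(\brho)$ lies in $\mathscr{D}(\brho)$, citing Lemma~\ref{lemma-Ext-tau}. But Lemma~\ref{lemma-Ext-tau} has hypothesis $\Ext^1_\Gamma(\tau,D_{0,\sigma}(\brho))\neq 0$, whereas a constituent $\tau$ of $\mathrm{gr}_1 D_{0,\sigma}(\brho)$ only gives $\Ext^1_\Gamma(\tau,\sigma)\neq 0$; these are not the same. Worse, the claim itself is false: by property~(b) in the definition of $D_0(\brho)$, each Serre weight $\sigma'\in\mathscr{D}(\brho)$ occurs exactly once in $D_0(\brho)=\bigoplus_{\sigma}D_{0,\sigma}(\brho)$, and since $\sigma'$ already occurs as the socle of $D_{0,\sigma'}(\brho)$, it does not occur in $D_{0,\sigma}(\brho)$ for $\sigma\neq\sigma'$. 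Hence no constituent of $\mathrm{gr}_1 D_{0,\sigma}(\brho)$ is a Serre weight, and your chain of implications collapses. (Your claim~(b), that at most one of $\mu_i^\pm(\sigma)$ lies in $\mathscr{D}(\brho)$, is actually correct, but it no longer helps.)

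The paper instead proves (i) directly from \cite[Thm.~14.8]{BP}, which identifies the constituents of $D_{0,\sigma}(\brho)$ as precisely those elements of $\cI(x_0,\dots,x_{f-1})$ \emph{compatible} with a fixed element $\mu_\lambda$. Since $\mu_i^+(\sigma)$ and $\mu_i^-(\sigma)$ carry opposite signs at the $(i{+}1)$-st coordinate, at most one can be compatible with $\mu_\lambda$; this is the mechanism, not membership in $\mathscr{D}(\brho)$. The boundary case where $\mu_i^-(\sigma)$ is undefined requires a separate small check (again via the explicit form of $\mu_\lambda$) that $\mu_i^+(\sigma)$ is not compatible.
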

\begin{proof}
(i) We already remarked that $\sigma$ is regular and $D_{0,\sigma}(\brho)$ is a sub-representation of $A_{\sigma}$. Write
\[\sigma=(\lambda_0(r_0),\cdots,\lambda_{f-1}(r_{f-1}))\otimes{\det}^{e(\lambda)(r_0,\cdots,r_{f-1})}\eta\]
with $\lambda=(\lambda_i(x_i))_i$ as in Lemma 11.2 (if $\brho$ is reducible)  or Lemma 11.4 (if $\brho$ is irreducible) of \cite{BP}. Fix $0\leq i\leq f-1$. If $\mu_i^-(\sigma)$ is defined, the assertion follows
from \cite[Thm. 14.8]{BP}, because  only one of $\mu_i^+(\sigma)$ and $\mu_i^-(\sigma)$ could be compatible with $\mu_{\lambda}$ (notations as in \emph{loc.cit.}). Otherwise, we have either $f=1$ and $\lambda_0(r_0)=p-2$ (hence $\brho$ is irreducible by the genericity of $\brho$), or $f\geq 2$ and $\lambda_{i+1}(r_{i+1})=0$. In the first case, it is direct to check that $D_{0,\sigma}(\brho)$ does not contain $\mu_0^+(\sigma)$, see \cite[\S16]{BP}. We assume $f\geq 2$ and  $\lambda_{i+1}(r_{i+1})=0$ for the rest. We need show that $\mathrm{gr}_1D_{0,\sigma}(\brho)$ does not contain $\mu_{i}^+(\sigma)$, i.e., the element $\mu^+:=(\cdots,p-2-x_{i},x_{i+1}+1,\cdots)$, where $(\mu^+)_{j}(x_j)=x_j$ if $j\notin\{i,i+1\}$,  is \emph{not} compatible with $\mu_{\lambda}$. This also follows from \cite[Thm. 14.8]{BP} by a careful analysis. Indeed,
by the genericity of $\brho$, $\lambda_{i+1}(r_{i+1})=0$ if and only if one of  the following holds:
\begin{enumerate}
\item[(a)] $\brho$ is reducible and, either $r_{i+1}=0$ and $\lambda_{i+1}(x_{i+1})=x_{i+1}$, or $r_{i+1}=p-3$ and $\lambda_{i+1}(x_{i+1})=p-3-x_{i+1}$;
\item[(b)] $\brho$ is irreducible and if $i+1>0$, then either $r_{i+1}=0$ and $\lambda_{i+1}(x_{i+1})=x_{i+1}$, or $r_{i+1}=p-3$ and $\lambda_{i+1}(x_{i+1})=p-3-x_{i+1}$; if $i+1=0$, then either $r_0=1$ and $\lambda_0(x_0)=x_0-1$, or $r_0=p-2$ and $\lambda_0(x_0)=p-2-x_0$.
\end{enumerate}
In all cases, we have $\mu_{\lambda,i+1}(y_{i+1})=p-1-y_{i+1}$, see the paragraph preceding \cite[Thm. 14.8]{BP}. Therefore $\mu^+$ is not compatible with $\mu_{\lambda}$.


For (ii), let $M$ be a \emph{non-split} $\Gamma$-extension
\[0\ra D_{0,\sigma}(\brho)\ra M\ra \sigma\ra0.\]
Then we can view $M$ as a sub-representation of $R_{\sigma}$. By (i), $D_{0,\sigma}(\brho)$ is alternative. Since $\sigma$ is not isomorphic to $\mu_i^+(\sigma)$ or $\mu_i^-(\sigma)$ for any $i$, $M$ is still alternative,
 so that  $M\cap A'_{\sigma}\subseteq A_{\sigma}$ by Lemma \ref{lemma-alternative}. Then Proposition \ref{prop-Q'} implies that $M\subseteq A_{\sigma}$, which gives a contradiction since $A_{\sigma}$ is multiplicity free while $M$ is not.
\end{proof}

\begin{remark}\label{rem-nonsplit}
If $\brho$ is reducible non-split, then $D_{0,\sigma}(\brho)$ is in general not alternative, see \cite[\S16, Example (ia)]{BP}.
\end{remark}

We choose a tame type $V$ such that the Jordan-H\"older factors of $\overline{V}$ are exactly the set $\mathscr{D}(\brho)$; this is possible by Proposition \ref{prop-tame-2}. For $\tau\in \mathscr{D}(\brho)$, let $V^\circ_{\tau}$ be the unique homothety class of lattices in $V$ such that the $\Gamma$-cosocle of $V^\circ_{\tau}/\varpi_E V^\circ_{\tau}$ is $\tau$.

\begin{proposition}\label{prop-quotient-lattice}
Let $\sigma_1,...,\sigma_{m}\in\mathscr{D}(\brho)$ be different weights. Let $\tau\in\mathscr{D}(\brho)$ and assume that $\dim_{\F}\Ext^1_{\Gamma}(\tau,\sigma_i)$=1 for all $1\leq i\leq m$. Let $M$ be the universal extension
\[0\ra \oplus_{i=1}^m\sigma_i\ra M\ra \tau\ra0,\]
i.e. the unique extension of $\tau$ by $\oplus_{i=1}^m\sigma_i$ whose cosocle is $\tau$. Then $M$ is a quotient of $V^{\circ}_{\tau}/\varpi_E V^{\circ}_{\tau}$.
\end{proposition}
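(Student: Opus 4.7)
The plan is to realise $M$ as a quotient of $N := \overline{V^{\circ}_\tau}$ obtained by killing a suitable piece of the cosocle filtration, and then to identify this quotient with $M$ via the uniqueness in the definition of the universal extension.

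First, I would invoke the $\Ext^1$-symmetry \cite[Cor. 5.6]{BP} (valid since $p\geq 3$) to upgrade the assumption $\Ext^1_{\Gamma}(\tau,\sigma_i)\neq 0$ to $\Ext^1_{\Gamma}(\sigma_i,\tau)\neq 0$ for each $i$. Applying Proposition \ref{prop-tame-1}(iii) to the lattice $V^{\circ}_\tau$ (whose reduction has cosocle $\tau$), I would then conclude that every $\sigma_i$ appears in the first layer $\mathrm{gr}^1 N$ of the cosocle filtration of $N$. Because $\overline{V}$ is residually multiplicity free by Proposition \ref{prop-tame-1}(i), the semisimple representation $\mathrm{gr}^1 N$ contains $\bigoplus_{i=1}^m \sigma_i$ as a direct summand, with each $\sigma_i$ occurring exactly once. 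Denoting by $N_1 \subseteq N$ the kernel of the cosocle projection $N \twoheadrightarrow \tau$, the composition $N_1 \twoheadrightarrow \rcosoc(N_1) = \mathrm{gr}^1 N \twoheadrightarrow \bigoplus_i \sigma_i$ is then a surjection whose kernel $K$ is a $\Gamma$-subrepresentation of $N$ (since $K \subseteq N_1 \subseteq N$).

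The quotient $N/K$ sits in a short exact sequence
\[0 \ra \bigoplus_{i=1}^m \sigma_i \ra N/K \ra \tau \ra 0,\]
and is non-zero, so its cosocle is a non-zero quotient of $\rcosoc(N) = \tau$ and hence equals $\tau$. To finish, I would invoke the uniqueness characterising the universal extension: an element of $\Ext^1_{\Gamma}(\tau,\bigoplus_i\sigma_i) = \bigoplus_i \Ext^1_{\Gamma}(\tau,\sigma_i)$ gives an extension with cosocle $\tau$ if and only if all of its components are non-zero, and any two such extension classes become proportional after rescaling each $\sigma_i$-factor by a suitable element of $\F^{\times}$, so they determine isomorphic extensions. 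This forces $N/K \cong M$, exhibiting $M$ as a quotient of $\overline{V^{\circ}_\tau}$. The only slightly delicate step is the orientation check in the application of Proposition \ref{prop-tame-1}(iii); once the $\Ext^1$-symmetry is in place, the rest is routine bookkeeping with cosocle filtrations in the residually multiplicity-free setting, and I do not anticipate a genuine obstacle.
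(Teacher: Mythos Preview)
Your proposal is correct and follows essentially the same approach as the paper: one invokes Proposition \ref{prop-tame-1}(iii) to place each $\sigma_i$ in the first layer of the cosocle filtration of $\overline{V^{\circ}_\tau}$, and then concludes that the resulting length-$(m+1)$ quotient is the universal extension. The paper's proof is a two-sentence sketch; your explicit appeal to the $\Ext^1$-symmetry of \cite[Cor.~5.6]{BP} for the orientation check, the use of residual multiplicity-freeness to isolate $\bigoplus_i\sigma_i$ inside $\mathrm{gr}^1$, and the rescaling argument identifying $N/K$ with $M$ are precisely the justifications the paper leaves implicit.
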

\begin{proof}
By definition, $V^{\circ}_{\tau}/\varpi_E V^{\circ}_{\tau}$ has cosocle isomorphic to $\tau$. Proposition \ref{prop-tame-1}(iii) shows that all $\sigma\in \mathscr{D}(\brho)$ such that $\Ext^1_{\Gamma}(\tau,\sigma)\neq0$ must appear in the $1$-st layer of the cosocle filtration of $V^{\circ}_{\tau}/\varpi_E V^{\circ}_{\tau}$, proving the result.
\end{proof}

\begin{corollary}\label{cor-local result}
Let $W$ be a sub-representation of $\oplus_{\sigma\in\mathscr{D}(\brho)}R_{\sigma}$ containing   $D_0(\brho)$. Assume that for each $\tau\in\mathscr{D}(\brho)$, $\Hom_{\Gamma}(V^{\circ}_{\tau}/\varpi_E V^{\circ}_{\tau},W)$ is 1-dimensional over $\F$.  Then  $W=D_0(\brho)$.
\end{corollary}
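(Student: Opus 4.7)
The plan is to argue by contradiction: assuming $W \supsetneq D_0(\brho)$, I will exhibit, for some $\tau \in \mathscr{D}(\brho)$, two linearly independent elements of $\Hom_\Gamma(V^\circ_\tau/\varpi_E V^\circ_\tau, W)$, contradicting the hypothesis. One such map $\phi_0$ is automatic: for any $\tau \in \mathscr{D}(\brho)$, the composition
\[\phi_0 : V^\circ_\tau/\varpi_E V^\circ_\tau \twoheadrightarrow \tau \hookrightarrow D_{0,\tau}(\brho) \hookrightarrow D_0(\brho) \hookrightarrow W\]
is a nonzero homomorphism. Since $W \subseteq \oplus_\sigma R_\sigma$ and $\rsoc_\Gamma R_\sigma = \sigma$, one has $\rsoc_\Gamma W \subseteq \oplus_{\sigma \in \mathscr{D}(\brho)} \sigma = \rsoc_\Gamma D_0(\brho)$; combined with $D_0(\brho) \subseteq W$, this yields equality of socles, a fact I will use to exclude split extensions below.

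Under the assumption $W \neq D_0(\brho)$, I choose an irreducible $\tau \hookrightarrow \rsoc_\Gamma(W/D_0(\brho))$ and let $W'$ be its preimage in $W$, giving a short exact sequence $0 \to D_0(\brho) \to W' \to \tau \to 0$. This sequence is non-split, for a splitting would force $W'$ to contain a copy of $\tau$ meeting $D_0(\brho)$ trivially, producing an irreducible sub of $W$ outside $\rsoc_\Gamma D_0(\brho)$ and contradicting the socle equality above. The class $[W']$ thus lies non-trivially in $\Ext^1_\Gamma(\tau, D_0(\brho)) = \oplus_\sigma \Ext^1_\Gamma(\tau, D_{0,\sigma}(\brho))$. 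By Lemma \ref{lemma-Ext-sigma}(ii) the component indexed by $\sigma = \tau$ vanishes; by Lemma \ref{lemma-Ext-tau}, any non-zero component $[W'_\sigma]$ for $\sigma \neq \tau$ arises via pushout along $\sigma \hookrightarrow D_{0,\sigma}(\brho)$ from a non-zero class in the $1$-dimensional space $\Ext^1_\Gamma(\tau, \sigma)$, and the non-triviality of $[W']$ forces $\tau \in \mathscr{D}(\brho)$. Let $\sigma_1, \ldots, \sigma_m$, with $m \geq 1$, be those $\sigma \neq \tau$ for which $[W'_\sigma] \neq 0$.

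I then form the universal extension $0 \to \oplus_{i=1}^m \sigma_i \to M \to \tau \to 0$ with cosocle $\tau$. By the description above, $W'$ is the pushout of $M$ along the natural inclusion $\oplus_{i=1}^m \sigma_i \hookrightarrow D_0(\brho)$; since this inclusion is injective, the canonical map $M \to W'$ is also injective. Proposition \ref{prop-quotient-lattice} furnishes a surjection $V^\circ_\tau/\varpi_E V^\circ_\tau \twoheadrightarrow M$, and composing with $M \hookrightarrow W' \hookrightarrow W$ yields a second map $\phi \in \Hom_\Gamma(V^\circ_\tau/\varpi_E V^\circ_\tau, W)$. To see that $\phi_0$ and $\phi$ are $\F$-linearly independent, I post-compose with the projection $W \twoheadrightarrow W/D_0(\brho)$: the image of $\phi_0$ lies in $D_0(\brho)$ and becomes zero, while $\phi$ surjects onto the copy of $\tau \subseteq W/D_0(\brho)$ supported by $W'$. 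Therefore $\dim_\F \Hom_\Gamma(V^\circ_\tau/\varpi_E V^\circ_\tau, W) \geq 2$, the desired contradiction.

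The main obstacle is the middle step, where one must split the class $[W']$ along the decomposition $D_0(\brho) = \oplus_\sigma D_{0,\sigma}(\brho)$ and control each summand as a pushout from a $1$-dimensional $\Ext^1_\Gamma(\tau, \sigma)$; this depends on Lemmas \ref{lemma-Ext-sigma}(ii) and \ref{lemma-Ext-tau}, and thereby crucially on the tameness of $\brho$ (the alternativity of $D_{0,\sigma}(\brho)$ may fail in the reducible non-split case, cf.\ Remark \ref{rem-nonsplit}). Once this structural description is in hand, identifying $W'$ as a pushout of $M$ and pulling $M$ back through $V^\circ_\tau/\varpi_E V^\circ_\tau$ via Proposition \ref{prop-quotient-lattice} is essentially formal.
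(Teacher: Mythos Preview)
Your proof is correct and follows essentially the same line as the paper's: both pick an irreducible $\tau$ in $\rsoc_\Gamma(W/D_0(\brho))$, use Lemmas~\ref{lemma-Ext-tau} and~\ref{lemma-Ext-sigma} to recognise the resulting non-split extension as a pushout of the universal extension $M$ of Proposition~\ref{prop-quotient-lattice}, and then derive a second, independent element of $\Hom_\Gamma(V^\circ_\tau/\varpi_E V^\circ_\tau,W)$. The only difference is cosmetic---you verify linear independence by post-composing with $W\twoheadrightarrow W/D_0(\brho)$, while the paper notes that the new map has image of length $\geq 2$ whereas the canonical map has irreducible image; you might also add a reference (e.g.\ \cite[Cor.~5.6(i)]{BP}) for the $1$-dimensionality of $\Ext^1_\Gamma(\tau,\sigma_i)$, which is needed to invoke Proposition~\ref{prop-quotient-lattice}.
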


\begin{proof}
Suppose that $W$ is strictly larger than $D_{0}(\brho)$ and let $\tau$ be a weight appearing in the socle of $W/D_0(\brho)$. Since $W$ and $D_0(\brho)$ have the same socle (both isomorphic to $\oplus_{\sigma\in\mathscr{D}(\brho)}\sigma$), we get a non-split $\Gamma$-extension, denoted by $M$,
\[0\ra D_0(\brho)\ra M\ra\tau\ra0.\]
Lemmas \ref{lemma-Ext-tau} and \ref{lemma-Ext-sigma} show that $\tau\in\mathscr{D}(\brho)$ and this extension is the pushout of a non-split extension $M'$  of $\tau$ by $\oplus_{\sigma\in S}\sigma$, for some subset $S\subset \mathscr{D}(\brho)$ uniquely determined by requiring that the cosocle of $M'$ is $\tau$. By \cite[Cor. 5.6(i)]{BP}, if $\sigma\in \mathscr{D}(\brho)$ is such that $\Ext^1_{\Gamma}(\tau,\sigma)\neq0$, then it is automatically of dimension 1 over $\F$.
Therefore, we may apply Proposition \ref{prop-quotient-lattice} and deduce that the latter extension is a quotient of $V^{\circ}_{\tau}/\varpi_E V^{\circ}_{\tau}$, providing an element of $\Hom_{\Gamma}(V^{\circ}_{\tau}/\varpi_E V^{\circ}_{\tau},W)$ whose image has length at least $2$. However, by assumption  $\Hom_{\Gamma}(V^{\circ}_{\tau}/\varpi_E V^{\circ}_{\tau},W)$ is of dimension 1, and must be spanned by the composite map
\begin{equation}\label{equation-1dim}V^{\circ}_{\tau}/\varpi_E V^{\circ}_{\tau}\twoheadrightarrow \tau\hookrightarrow \rsoc_{\Gamma}W\subseteq W.\end{equation}The contradiction allows to conclude.
\end{proof}

\section{Global input}

We prove Theorem \ref{theorem-main}  in this section. Let $F$ be a totally real extension of $\QM$ in which $p$ is \emph{unramified}, and $\OC_F$ be its ring of integers. For any place $v$ of $F,$ let $F_v$ denote the completion of $F$ at $v$ with ring of integers $\OC_{F_v},$ uniformiser $\varpi_v$ and residue field $k_{v}.$ The cardinality of $k_v$ is denoted by $q_v.$ We let $\AM_{F,f}$ denote the ring of finite ad\`eles of $F.$ If $S$ is a finite set of places of $F,$ we let $\AM^S_{F,f}$ denote the finite ad\`eles outside $S.$ We write $G_F:=\Gal(\Fov/F)$ for the global absolute Galois group of $F,$ and $G_{F_v}:=\Gal(\Fov_v/F_v)$ for the local Galois group at $v.$ We fix an embedding $\Fov\into \Fov_v$ so that $G_{F_v}$ identifies with the decomposition group of $v$ over $F.$ We let $\Frob_v\in G_{F_v}$ denote a (lift of the) geometric Frobenius element, and let $\Art_{F_v}$ denote the local Artin reciprocity map, normalised so that it sends  $\varpi_v$ to $\Frob_v$. The global Artin map is denoted by $\Art_F.$

If $v$ is a place of $F$ over some prime $l.$ An {\em inertial type} for $F_v$ is a two-dimensional $E$-representation $\tau_v$ of the inertia group $I_{F_v}:=I(\overline{F}_v/F_v)$ with open kernel, which may be extended to $G_{F_v}.$ We mainly consider $l=p$ case. Then the tame types considered in \S \ref{section-tame} correspond to the non-scalar tame inertial types under Henniart's inertial local Langlands correspondence \cite{He02}.

Let $D$ be a quaternion algebra with center $F.$ Let $\Sigma$ be the set of finite places where $D$ is ramified. We assume that $\Sigma$ does not contain any place dividing  $p.$ We consider the case that $D$ is ramified at all infinite places ({\em the definite case}), or split at exactly one infinite place ({\em the indefinite case}). \emph{We exclude the case $F=\QM$ and $D=\GL_2.$}

\subsection{Space of modular forms: the definite case}

 Assume $D$ is definite. For $A=\OC_E$ or $\FM,$ a locally constant character $\psi:F^\times\backslash \AM_{F,f}^\times\to A^\times$ and $U=\prod_w U_w\subset(D\otimes_F \AM_{F,f})^\times$ an open compact subgroup such that $\psi|_{U\cap\AM_{F,f}^\times}=1.$ We denote by $S^D_\psi(U,A)$ the space of functions
\[
f:D^\times\backslash (D\otimes_F \AM_{F,f})^\times/U\to A
\]
such that $f(xg)=\psi(x)f(g)$ for any $x\in \AM_{F,f}^\times$ and $g\in (D\otimes_F \AM_{F,f})^\times.$
By taking limit over all the open compact subgroups $U\subset(D\otimes_F \AM_{F,f})^\times$ satisfying $\psi|_{U\cap\AM_{F,f}^\times}=1,$ we obtain an $A$-module
\[
S^D_\psi(A):=\varinjlim_U S^D_\psi(U,A).
\]
which carries a smooth admissible $A$-linear action of $(D\otimes_F \AM_{F,f})^\times$ by $(gf)(h):=f(hg)$ for $g,h\in (D\otimes_F \AM_{F,f})^\times.$

Let $S$ be a finite set of finite places of $F$ which contains $\Sigma,$ the places lying over $p,$ the places at which $U_w$ is not maximal, and the places at which $\psi$ is ramified. We consider the Hecke algebra
\[
\TM^{S}(\OC_E)=\OC_E[\GL_2(\AM^S_{F,f})/ /U^S]=\bigotimes_{w\notin S}{}'\TM_w(\OC_E),
\]
where $U^S=\prod_{w\notin S}\GL_2(\OC_{F_w}),$ and
\[
\TM_w(\OC_E)=\OC_E[\GL_2(F_w)/ /\GL_2(\OC_{F_w})]\cong \OC_E[T_w,S_w^{\pm 1}].
\]
Here, $T_w$ is the Hecke operator corresponding to the double coset
\[
\GL_2(\OC_{F_w}) \left(
                   \begin{array}{cc}
                     \varpi_w & 0 \\
                     0 & 1 \\
                   \end{array}
                 \right)
\GL_2(\OC_{F_w})
\]
and $S_w$ is the one corresponding to
\[
\GL_2(\OC_{F_w})\left(
                   \begin{array}{cc}
                     \varpi_w & 0 \\
                     0 &  \varpi_w\\
                   \end{array}
                 \right)\GL_2(\OC_{F_w}).
\]
The abstract Hecke algebra $\TM^S(\OC_E)$ acts on $S^D_\psi(U,\OC_E)$ via Hecke correspondence, and we let $\TM^S(U,\OC_E)$ denote the image of $\TM^{S}(\OC_E)$ in $\End_{\OC_E}(S^D_\psi(U,\OC_E)).$

Let $\overline{r}:G_{F}\to \GL_2(\FM)$ be a two dimensional continuous totally odd and absolutely irreducible Galois representation. For $w$ a finite place of $F,$ let $\overline{\rho}_w:=\overline{r}|_{G_{F_w}}$ denote the restriction of $\overline{r}$ to $G_{F_w}.$ We choose an $S$ as above which further contains all the ramified places of $\overline{r}.$ We associate to $\overline{r}$ a maximal ideal $\mathfrak{m}^S_{\overline{r}}$ of $\TM^{S}(\OC_E),$ given as the kernel of the map $\TM^S(\OC_E)\to \FM$ sending $T_w$ to $\psi(\Frob_w^{-1})\tr(\overline{\rho}_w(\Frob_w))$ and $S_w$ to $q_w\det(\overline{\rho}_w(\Frob_w))$ for $w\notin S.$

Let
\[
S^D_\psi(U,\FM)[\mathfrak{m}^{S}_{\overline{r}}]:=\{f\in S^D_\psi(U,\FM), ~Tf=0 ~\forall T\in \mathfrak{m}^{S}_{\overline{r}}\}.
\]
By \cite[Lem. 4.6]{BDJ}, the space $S^D_\psi(U,\FM)[\mathfrak{m}^{S}_{\overline{r}}]$ is independent of the choice of $S$, so we denote it $S^D_\psi(U,\FM)[\mathfrak{m}_{\overline{r}}]$. Therefore, we take the limit
\[
S^D_{\psi}(\FM)[\mathfrak{m}_{\overline{r}}]:=\varinjlim_U S^D_\psi(U,\FM)[\mathfrak{m}_{\overline{r}}].
\]

\begin{definition}
We say that $\overline{r}$ is {\em modular} if there is a finite character $\psi:G_F\to \OC_E^\times$ such that $\det\overline{r}=\overline{\varepsilon}^{-1}\overline{\psi}$ with $\varepsilon$ the $p$-adic cyclotomic character, and
\[
S^D_{\psi}(\FM)[\mathfrak{m}_{\overline{r}}]\neq 0.
\]
This is equivalent to demanding the localization $S^D_\psi(U,\FM)_{\mathfrak{m}^S_{\overline{r}}}\neq 0$ for some $U.$
\end{definition}

We recall the following conjecture of \cite{BDJ}.

\begin{conjecture}(\cite[Conj. 4.7]{BDJ})\label{conj-BDJ 1}
Suppose that $\overline{r}$ is modular. There is a $(D\otimes_F \AM_{F,f})^\times$-equivariant isomorphism
\[S^D_{\psi}(\FM)[\mathfrak{m}_{\overline{r}}]\cong \otimes'_w\pi^D_w(\overline{r})\]
where, in the restricted tensor product on the right hand side, $\pi^D_w(\overline{r})$ is the smooth admissible representation of $(D\otimes_F F_w)^\times$ associated to $\overline{\rho}_w$ by Vign\'eras and Emerton when $w\nmid p,$ and when $w|p$,  $\pi^D_w(\overline{r})$ is a smooth admissible representation of $(D\otimes_F F_w)^\times\cong \GL_2(F_w)$
such that if $\sigma$ is a weight, then
$\Hom_{\GL_2(\cO_{F_w})}(\sigma,\pi_w^D(\brho))\neq0 $ if and only if $\sigma\in \mathscr{D}(\overline{\rho}_w)$.
\end{conjecture}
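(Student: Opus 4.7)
The statement is in fact a conjecture of Buzzard-Diamond-Jarvis, so a complete proof is currently out of reach: at the places $w\mid p$ it amounts to a mod $p$ local Langlands correspondence for $\GL_2(F_w)$, which is not available for $F_w\neq\QM_p$. My plan is therefore to reduce the conjecture place by place, identifying the well-understood factors and isolating the essential obstacle.

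At an unramified place $w\notin S$: take $U_w=\GL_2(\OC_{F_w})$ and study $S^D_\psi(U,\FM)[\mathfrak{m}_{\overline{r}}]$ as a module over the spherical Hecke algebra $\TM^S(\OC_E)$. Absolute irreducibility of $\overline{r}$ forces the joint eigensystem of $(T_w,S_w)$ on this space to be the one prescribed by $\overline{\rho}_w(\Frob_w)$; shrinking $U_w$ (and its analogues) and taking the limit identifies the local factor $\pi_w^D(\overline{r})$ with the corresponding tame unramified principal series, and tensor factorisation at $w$ is automatic from commutativity of Hecke operators at distinct places, given Vigneras' mod $p$ Satake theory.

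At a place $w\in S$ with $w\nmid p$: the expected factor $\pi_w^D(\overline{r})$ is the mod $p$ Vigneras-Emerton representation attached to $\overline{\rho}_w$. I would match the allowed inertial types via Henniart's correspondence and then promote the matching to a representation-theoretic identification via a Taylor-Wiles-Kisin patching argument, the essential input being freeness of rank one of the patched module over the appropriate framed local deformation ring. These techniques are by now standard under the Taylor-Wiles hypotheses used in \cite{EGS}.

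The main obstacle is $w\mid p$. Here the natural \emph{definition}
\[
\pi_w^D(\overline{r}):=\varinjlim_{U^w}\Hom_{\FM[G_F]}\bigl(\overline{r},S^D_\psi(U^w,\FM)\bigr),
\]
with $U^w$ ranging over opens away from $w$, produces a smooth admissible $\GL_2(F_w)$-representation. The socle condition that $\Hom_{\GL_2(\OC_{F_w})}(\sigma,\pi_w^D(\overline{r}))\neq0$ holds precisely for $\sigma\in\mathscr{D}(\overline{\rho}_w)$ is the Buzzard-Diamond-Jarvis weight part, proved by Gee-Liu-Savitt under the hypotheses of Theorem \ref{theorem-main}. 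However, the \emph{tensor product} assertion of the conjecture requires identifying $\pi_w^D(\overline{r})$ in its entirety, not merely its socle; this reduces to a very strong freeness property of the patched cohomology at $w$, morally equivalent to a mod $p$ Langlands correspondence. The present paper advances exactly this program by refining the freeness input of \cite{EGS} from the pro-$p$-Iwahori invariants to the full $K_1$-invariant subspace, and identifying the latter with $D_0(\overline{\rho}_w)$ in the tame generic case. A full proof of the conjecture, even in that case, thus reduces to pinning down the $\GL_2(\OC_{F_w})$-representations on the higher layers of the $\Gamma(\mathfrak{p}^n)$-invariants, which will be the key challenge.
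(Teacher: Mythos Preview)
The statement is recorded in the paper as a \emph{conjecture} (Conjecture~\ref{conj-BDJ 1}, recalling \cite[Conj.~4.7]{BDJ}), and the paper does not attempt to prove it; the only commentary is the remark that its weight part has been established in \cite{Gee11,GLS} and that the socle multiplicity one at $w\mid p$ is proved in \cite{EGS}. You correctly identify this status at the outset, so there is no ``paper's proof'' against which to compare your proposal.

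Your sketch of the expected reduction is broadly reasonable but a few points are overstated. At unramified $w\notin S$, commutativity of Hecke operators at different places does \emph{not} by itself yield a tensor factorisation of the $(D\otimes_F\AM_{F,f})^\times$-representation; what it gives is a common eigensystem, and promoting this to an honest restricted tensor product still requires a multiplicity-one input (this is precisely why the conjecture is nontrivial even away from $p$). Likewise, at $w\in S$, $w\nmid p$, freeness of a patched module over a local deformation ring identifies the $K$-type structure but does not on its own produce the full local factor as a $(D\otimes_F F_w)^\times$-representation; one needs the Emerton--Helm/Vign\'eras local-global compatibility, and even that presupposes the factorisation one is trying to prove. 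Finally, in the definite case there is no Galois action on $S^D_\psi(\FM)$, so your proposed definition of $\pi_w^D(\overline r)$ via $\Hom_{\FM[G_F]}(\overline r,\,\cdot\,)$ does not parse; the paper instead cuts out the local factor using the Hecke ideal $\mathfrak m'$ and the auxiliary representations $\overline L_w$ at the other bad places (see \S\ref{section-minimal}).
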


\begin{remark}
The weight part of this conjecture is proved in many cases by Gee \cite{Gee11} and then is completely proved in \cite{GLS} (for $\bar r$ satisfying the usual Taylor-Wiles hypothesis). In \cite{EGS}, the authors proved that if $\brho_{w}$ is generic, then the $\GL_2(\cO_{F_w})$-socle of $\pi_w^{D}(\bar r )$ is exactly $\oplus_{\s\in\mathscr{D}(\overline{\rho}_w)} \s$.
\end{remark}

Let $\s_U$ be a finitely generated continuous representation of some open compact subgroup $U$ of $(D\otimes_F \AM_{F,f})^\times$ over $\OC_E.$ We denote
\[
S^D(\s_U):=\Hom_U(\s_U,S^D_{\psi}(\OC_E)\otimes_{\OC_E} E/\OC_E),
\]
and write $\TM(\s_U)$ for the image of the abstract Hecke algebra in $\End_{\OC_E}(S^D(\s_U)).$

We make the usual Taylor-Wiles assumptions on $\overline{r}:G_F\to \GL_2(\FM).$

\medskip
\noindent{\bf Assumption I.} We assume that $p$ is odd; $\overline{r}$ is modular and $\overline{r}|_{G_{F(\zeta_p)}}$ is absolutely irreducible, where $\zeta_p$ is a primitive $p$-th root of unity; if $p=5,$ assume further that the projective image of $\overline{r}|_{G_{F(\zeta_p)}}$ is not isomorphic to $A_5.$

\medskip

Let $S$ be a subset of finite places of $F$ containing $\Sigma,$ the places above $p$ and the places where $\overline{r}$ or $\psi$ is ramified. We can choose a finite place $w_1\notin S$ with the properties that

$\bullet$ $q_{w_1}\not\equiv 1$ (mod $p$),

$\bullet$ the ratio of the eigenvalues of $\overline{r}(\Frob_{w_1})$ is not equal to $q_{w_1}^{\pm 1}$

$\bullet$ the residue characteristic of $w_1$ is sufficiently large that for any non-trivial root of unity $\zeta$ in a quadratic extension of $F,$ $w_1$ does not divide $\zeta+\zeta^{-1}-2.$

We consider open compact subgroups $U=\prod_w U_w$ of $(D\otimes_F \AM_{F,f})^\times$ for which $U_w$ is maximal if $w\notin S\cup\{w_1\},$ and $U_{w_1}$ is the subgroup of $\GL_2(\OC_{F_{w_1}})$ consisting of elements that are upper-triangular and unipotent modulo $w_1.$ Under these assumptions, $U$ is sufficiently small.  For $\s_U$ as above and each $w$ in $S,$ fix a character $\psi_{\s_U,w}:G_{F_w}\to\OC_E^\times$ such that $\psi_{\s_U,w}(\Frob_w)=\psi(\Frob_w),$ and $\psi_{\s_U,w}|_{I_{F_w}}\circ \Art_{F_w}=\s_U|_{\OC_{F_w}^\times}.$
By Hensel's lemma, there is a unique character $\th_w:G_{F_w}\to \OC_E^\times$ with the properties that $\overline{\th}_w=1$ and $\th_w^2=\psi^{-1}_{\s_U,w}\psi|_{G_{F_w}}.$ We write $\s_U(\th)$ for the twist of $\s_U$ by $\otimes_{w\in S}(\th_w\circ \Art_{F_w}\circ\det),$ where $\det$ is understood as the reduced norm of $D_w^\times$ if $w\in \Sigma.$ We extend $\s_U(\th)$ to an action of $U\cdot \AM_{F,f}^\times$ by letting $\AM_{F,f}^\times$ act via the composite $\AM_{F,f}^\times\to \AM_{F,f}^\times/F^\times\to \cO^\times,$ with the second map the one induced by $\psi\circ \Art_F.$

Let $\s_U(\th)^*$ denote the Pontryagin dual of $\s_U(\th).$ The space $S^D(\s_U)$ can be identified with the space of continuous functions
\[
f:D^\times\backslash (D\otimes_F \AM_{F,f})^\times\to \s_U(\th)^*
\]
such that $f(gu)=u^{-1}f(g)$ for all $g\in(D\otimes_F \AM_{F,f})^\times,u\in U\cdot \AM_{F,f}^\times.$ 
By the weight part of Conjecture \ref{conj-BDJ 1}, for every modular $\overline{r}:G_F\to\GL_2(\FM),$ we can choose $U=\prod_w U_w$ with $U_w=\GL_2(\OC_{F_w})$ for all $w|p,$ and $\s_U:=\otimes_{w|p}\s_w$ with $\s_w$ a Serre weight of $\overline{\rho}_w,$ such that $S^D(\s_U)_{\mathfrak{m}^{S}_{\overline{r}}}\neq 0.$ We define
\[
M_{\overline{r}}(\s_U):=S^D(\s_U)^*_{\mathfrak{m}^{S}_{\overline{r}}},
\]
where $^*$ stands for the Pontryagin dual.

\subsection{Space of modular forms: the indefinite case} Assume $D$ is indefinite. For any open compact subgroup $U\subset (D\otimes_F \AM_{F,f})^\times,$ there is a Shimura curve $X_U$ over $F,$ a smooth projective algebraic curve whose complex points are naturally identified with
\[
D^\times \backslash ((D\otimes_F \AM_{F,f})^\times\times (\CM\backslash \RM))/U.
\]
Let $A$ be $\OC_E$ or $\FM,$ we define
\[
S^D(U,A):=H^1_{\textrm{\'et}}(X_{U,\overline{F}},A),
\]
and
\[
S^D(A):=\varinjlim_U H^1_{\textrm{\'et}}(X_{U,\overline{F}},A)
\]
where the limit is taken over all the open compact subgroups of $(D\otimes_F \AM_{F,f})^\times.$

As in the definite case, we take $S$ a finite set of finite places of $F$ which contains $\Sigma,$ the places lying over $p,$ the places at which $U_w$ is not maximal, and we can form the abstract Hecke algebra $\TM^S(\OC_E)$ acting on $S^D(U,\OC_E)$ and let $\TM^S(U,\OC_E)$ denote its image in $\End_{\OC_E}(S^D(U,\OC_E)).$ Let $\overline{r}:G_{F}\to \GL_2(\FM)$ be as above. For suitable choice of places $S,$ we get a maximal ideal $\mathfrak{m}^S_{\overline{r}}\subset\TM^S(\OC_E),$ and we can define $S^D(U,\FM)[\mathfrak{m}^S_{\overline{r}}],$ $S^D(\FM)[\mathfrak{m}_{\overline{r}}]$ and $S^D(\FM)_{\mathfrak{m}^S_{\overline{r}}}$ as in the definite case, see \cite[Page 150]{BDJ}. If $S^D(\FM)_{\mathfrak{m}^S_{\overline{r}}}\neq 0$ for some $S,$ we say that $\overline{r}$ is modular.

We recall the following conjecture of \cite{BDJ}.

\begin{conjecture}(\cite[Conj. 4.9]{BDJ})\label{conj-BDJ 2}
Suppose that $\overline{r}$ is modular. There is a $G_F\times(D\otimes_F \AM_{F,f})^\times$-equivariant isomorphism
\[
S^D(\FM)[\mathfrak{m}_{\overline{r}}]\cong \overline{r}\otimes\big(\otimes'_w \pi^D_w(\overline{r})\big)
\]
where, in the restricted tensor product on the right hand side, $\pi^D_w(\overline{r})$ is the smooth admissible representation of $(D\otimes_F F_w)^\times$ associated to $\overline{\rho}_w$ by Vign\'eras and Emerton when $w\nmid p,$ and when $w|p$,  $\pi^D_w(\overline{r})$ is a smooth admissible representation of $(D\otimes_F F_w)^\times\cong \GL_2(F_w)$
such that if $\sigma$ is a weight, then
$\Hom_{\GL_2(\cO_{F_w})}(\sigma,\pi_w^D(\brho))\neq0 $ if and only if $\sigma\in \mathscr{D}(\overline{\rho}_w)$.
\end{conjecture}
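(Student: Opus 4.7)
My plan is to argue place-by-place, reducing the conjecture to a statement about the local factor at each $w$. First, because $\overline{r}$ is absolutely irreducible and its action on \'etale cohomology of the Shimura curves commutes with the action of $(D\otimes_F\mathbb{A}_{F,f})^\times$ via Hecke correspondences, a now-standard Carayol--Taylor-type argument yields an isomorphism $S^D(\FM)[\mathfrak{m}_{\overline{r}}]\cong \overline{r}\otimes\pi^D(\overline{r})$ of $G_F\times(D\otimes_F\mathbb{A}_{F,f})^\times$-modules, where $\pi^D(\overline{r})$ is a smooth admissible $(D\otimes_F\mathbb{A}_{F,f})^\times$-representation on which $\TM^{S}(\OC_E)$ acts through the system of eigenvalues cut out by $\mathfrak{m}_{\overline{r}}^S$. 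This isolates the problem of describing $\pi^D(\overline{r})$ itself, and uses Taylor-Wiles style assumptions on $\overline{r}$ only if needed to rule out multiplicities.

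Next I would establish a restricted tensor product factorization $\pi^D(\overline{r})\cong{\otimes'_w}\pi_w^D(\overline{r})$ by a new-vector / strong multiplicity-one argument across levels away from $w$: for each finite place $w$, define $\pi_w^D(\overline{r})$ as the smooth $(D\otimes_F F_w)^\times$-subquotient isolated by diagonalizing $\TM_{w'}(\OC_E)$ for all $w'\neq w$ against $\mathfrak{m}_{\overline{r}}$. For $w\nmid p$ one identifies $\pi_w^D(\overline{r})$ with the representation predicted by the mod $p$ local Langlands correspondence of Vign\'eras and Emerton (combined with local Jacquet--Langlands when $w\in\Sigma$): at unramified $w$, the Hecke eigenvalues $\psi(\Frob_w^{-1})\tr(\overline{\rho}_w(\Frob_w))$ of $T_w$ force the unramified principal series attached to $\overline{\rho}_w$; at ramified $w\nmid p$, one invokes local-global compatibility at $l\neq p$.

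At $w\mid p$, the substantive content of the conjecture is the determination of the $\GL_2(\OC_{F_w})$-socle of $\pi_w^D(\overline{r})$ in terms of $\mathscr{D}(\overline{\rho}_w)$. The inclusion $\rsoc_{\GL_2(\OC_{F_w})}\pi_w^D(\overline{r})\subseteq \bigoplus_{\sigma\in\mathscr{D}(\overline{\rho}_w)}\sigma$ can be obtained by constraining which Serre weights can contribute via crystalline / potentially Barsotti--Tate modularity lifting theorems, as carried out by Gee \cite{Gee11}. The reverse inclusion requires, for each prescribed $\sigma\in\mathscr{D}(\overline{\rho}_w)$, constructing a modular characteristic-zero lift of $\overline{r}$ with Hodge--Tate weights dictated by $\sigma$, then Taylor--Wiles patching together with Kisin's local deformation rings for Barsotti--Tate types to descend back mod $p$; this is the main theorem of \cite{GLS}, with a refined patched-module version (a patching functor compatible with tame types $V$ and their lattices $V^\circ_\tau$ as in \S\ref{section-tame}) available in \cite{EGS}.

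The principal obstacle is that the conjecture as stated demands an isomorphism of $\GL_2(F_w)$-representations, not merely agreement of socles. Since there is no classification of irreducible admissible smooth $\FM$-representations of $\GL_2(F_w)$ when $F_w\neq \QM_p$ (\cite{BP}, \cite{Hu12}, \cite{Sch}), one cannot hope to characterize $\pi_w^D(\overline{r})$ by purely local representation-theoretic means, and the conjecture cannot be attacked directly. The realistic strategy is instead to pin down $\pi_w^D(\overline{r})$ one congruence level at a time: use the EGS patching functor together with the local structural results of \S\ref{subsection-A} (in particular the alternating sub-representation criterion of Lemma \ref{lemma-alternative} and the uniqueness Corollary \ref{cor-local result}) to identify, say, the $K_1$-fixed vectors with $D_0(\overline{\rho}_w)$. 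This is precisely what Theorem \ref{theorem-main} accomplishes in the tame generic case, and is the best current approximation to the full conjecture; bridging the remaining gap between ``$K_1$-invariants are $D_0(\overline{\rho}_w)$'' and ``$\pi_w^D(\overline{r})$ is a specific representation'' appears to require genuinely new input into the mod $p$ Langlands program.
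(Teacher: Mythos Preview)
The statement you are addressing is labelled a \emph{Conjecture} in the paper, and the paper does not provide a proof of it. It is recalled from \cite[Conj.~4.9]{BDJ} as motivation, and the remark immediately following it records only partial progress: the weight part is a theorem of \cite{GLS} under Taylor--Wiles hypotheses, and the socle multiplicity-one statement is established in \cite{EGS}. The full tensor-product decomposition, and in particular the identification of $\pi_w^D(\overline{r})$ at $w\mid p$ as a specific $\GL_2(F_w)$-representation, remains open.

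Your proposal is therefore not a proof but a survey of the state of the art, and in fact you acknowledge this yourself in the final paragraph. The earlier paragraphs are somewhat misleading in tone: the Carayol-type isomorphism $S^D(\FM)[\mathfrak{m}_{\overline{r}}]\cong \overline{r}\otimes\pi^D(\overline{r})$ is indeed known, but the restricted tensor factorization $\pi^D(\overline{r})\cong\otimes'_w\pi_w^D(\overline{r})$ is \emph{not} a consequence of any strong-multiplicity-one argument in characteristic $p$; this is precisely the content of the conjecture that is not known. Your sketch for $w\nmid p$ and the socle determination at $w\mid p$ are accurate summaries of \cite{Gee11}, \cite{GLS}, \cite{EGS}, and your closing assessment that bridging the gap to a full $\GL_2(F_w)$-isomorphism requires new ideas is exactly the situation. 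So there is no discrepancy with the paper's own proof to diagnose, because there is no such proof: the correct response here is simply that this is an open conjecture, partial results toward which are the subject of the paper.
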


\begin{remark}
The weight part of this conjecture is also proved in \cite{GLS}(under the usual Taylor-Wiles hypothesis), and the multiplicity one is proved in \cite{EGS}.
\end{remark}

If $\s_U$ is a finitely generated continuous representation of some open compact subgroup $U$ of $(D\otimes_F \AM_{F,f})^\times$ over $\OC_E.$ We denote
\[
S^D(\s_U):=\Hom_U(\s_U,S^D(\OC_E)\otimes_{\OC_E} E/\OC_E),
\]
and write $\TM(\s_U)$ for the image of the abstract Hecke algebra in $\End_{\OC_E}(S^D(\s_U)).$

For the open compact subgroups $U$ under similar conditions as in the definite case, there is a local system $\FC_{\s_U(\th)^*}$ on $X_U$ corresponding to $\s_U(\th)^*$ in the usual way, and $S^D(\s_U)$ can be identified with
$H^1_{\textrm{\'et}}(X_{U,\overline{F}},\FC_{\s_U(\th)^*}).$ 
For modular $\overline{r},$ we can find $S,U,\s_U$ such that $S^D(\s_U)_{\mathfrak{m}^{S}_{\overline{r}}}\neq0.$
We define
\[
M_{\overline{r}}(\s_U):=(\Hom_{\TM(\s_U)_{\mathfrak{m}^{S}_{\overline{r}}}[G_F]}(r_\mathfrak{m},S^D(\s_U)_{\mathfrak{m}^{S}_{\overline{r}}}))^*,
\]
where $r_\mathfrak{m}:G_F\to \GL_2(\TM(\s_U)_{\mathfrak{m}^{S}_{\overline{r}}})$ is the two dimensional continuous Galois representation associated to $\overline{r}$ by Carayol \cite{Car}.

\subsection{Minimal level case}\label{section-minimal}

 We recall the minimal level case as in \cite{BD11} and more generally in \cite{EGS}. Fix a place $v|p.$  Following \cite[\S 6.5]{EGS}, we assume Assumption I and make the following additional assumption:

\medskip

\noindent{\bf Assumption II.} We assume that $p\geq 5,$ $\overline{\rho}_w$ is generic for all places $w|p,$ and if $w\in \Sigma,$ $\overline{\rho}_w$ is not scalar.

\medskip

In fact, the case where $\overline{\rho}_w$ is reducible non-scalar for all $w|p,w\neq v$ is considered in \cite[\S 3.3]{BD11}. This is generalized to the case where $\overline{\rho}_w$ is irreducible in \cite[\S 6.5]{EGS}. The assumption that $\overline{\rho}_w$ is generic excludes the scalar case. We follow the notations in \cite[\S 6.5]{EGS}. Let $\psi$ be the Teichm\"uller lift of $\bar{\varepsilon}\det\bar{r}$, and let $S$ be the union of $\Sigma,$ the places over $p,$ and the places where $\bar{r}$ is ramified.
For each place $w\in S, w\neq v,$ one can define a compact open subgroup $U_w$ of $(\OC_D)_w^\times$ and a finite free $\OC_E$-module $L_w$ with continuous action of $U_w$. We write $L:=\otimes_{w\in S,w\neq v,\OC_E}L_w$ which is a finitely generated continuous representation of $U^{v}:=\prod_{w\in S,w\neq v}U_w\times \prod_{w\notin S}\GL_2(\OC_{F_w}).$ Let $S'$ be the union of the set of places $w|p,w\neq v$ for which $\overline{\rho}_w$ is reducible, the set of places $w\in\Sigma$ for which $\overline{\rho}_w$ is reducible, and the place $w_1.$ For places $w$ in $S',$ one can define the Hecke operator $T_w$ and choose scalars $\beta_w\in \FM^{\times}$ as in {\em loc.cit.}

For any finitely generated $\OC_E$-module $\s_{v}$ with a continuous action of $\GL_2(\OC_{F_{v}}),$ we consider the space of modular forms $S^D(\s_{v}\otimes L)$ with an action of the Hecke algebra $\TM(\s_{v}\otimes L).$ Note that $\s_v\otimes L$ yields a representation of $U$ via the projection $U\to U_S.$ As in {\em loc.cit.} one can extend this action to an action of $\TM(\s_{v}\otimes L)':=\TM(\s_{v}\otimes L)[T_w]_{w\in S'}.$ Denote by $\mathfrak{m}'$ the maximal ideal of $\TM(\s_{v}\otimes L)'$ generated by $\mathfrak{m}^S_{\overline{r}}$ and the elements $T_w-\beta_w$ for $w\in S'.$ We define
\[
S^{D,\min}(\s_{v})_{\mathfrak{m'}}:=S^D(\s_{v}\otimes L)_{\mathfrak{m}'},
\]
and construct $M^{\min}_{\overline{r}}(\s_{v})$ from $S^{D,\min}(\s_{v})_{\mathfrak{m'}}$ in the same way as we constructed $M_{\overline{r}}(\s_U)$ from $S^D(\s_U)_{\mathfrak{m}^{S}_{\overline{r}}}.$

\begin{definition}
We define a smooth admissible representation $\pi^D_{v}(\overline{r})$ of $\GL_2(F_{v})$ as follows: in the definite case,
\[
\pi^D_{v}(\overline{r}):=\Hom_{\FM[U^{v}]}(\otimes_{w\in S,w\neq v}\overline{L}_w,S^D_{\psi}(\FM)[\mathfrak{m}_{\overline{r}}])[\mathfrak{m'}];
\]
and in the indefinite case,
\[
\pi^D_{v}(\overline{r}):=\Hom_{\FM[U^{v}][G_F]}(\overline{r}\otimes\otimes_{w\in S,w\neq v}\overline{L}_w,S^D(\FM)[\mathfrak{m}_{\overline{r}}])[\mathfrak{m'}].
\]
\end{definition}

By definition, for any finite dimensional representation $\s_{v}$ of $\GL_2(\OC_{F_{v}})$ over $\FM$, we have
\begin{equation}\label{equation-isotypic}
\Hom_{\GL_2(\OC_{F_{v}})}(\s_{v},\pi^D_{v}(\overline{r}))\cong M^{\min}_{\bar{r}}(\s_{v})^*[\mathfrak{m}'].
\end{equation}

\begin{remark}\label{remark1}
If the decompositions in Conjectures \ref{conj-BDJ 1} and \ref{conj-BDJ 2} hold, the representation $\pi^D_{v}(\overline{r})$ defined above is the local factor at $v.$ Indeed, Breuil and Diamond \cite[Cor. 3.7.4]{BD11} proved this when $\overline{\rho}_{w}$ is reducible non-scalar for all $w|p$.  By the same arguments, the multiplicity one result of \cite[Thm. 10.1.1]{EGS} (recalled in Thm. \ref{thm-EGS} below) implies the general case when $\overline{\rho}_{w}$ are only assumed to be generic.
\end{remark}

\subsection{Consequences}

In this subsection, we first recall the multiplicity one results of \cite{EGS} for lattices of tame types,  then we combine their results with our local results in \S 2.6 to prove our main theorem.

We continue with the minimal level case in \S \ref{section-minimal}. We let $R^{\Box}_w$ denote the universal framed deformation ring for $\overline{\rho}_w$ over $\OC_E.$ For every $w\in S, w\neq v,$ let $R^{\min}_w$ denote the local deformation ring defined in \cite[Page 51]{EGS}. The ring $R^{\min}_w$ is an $R^{\Box}_w$-algebra and is formally smooth over $\OC_E.$ At the place $v,$ let $\tau_{v}$ be a non-scalar tame inertial type for $I_{F_{v}}$ satisfying the condition that $\det \tau_{v}$ is a lift of $\overline{\varepsilon}\det\overline{\rho}_{v}|_{I_{F_{v}}},$ so that any integral lattice of the tame type $V(\tau_{v})$ admits a central character lifting the character $(\overline{\varepsilon}\det\overline{\rho}_{v}|_{I_{F_{v}}})\circ \Art_{F_{v}}.$ We write $R_{v}^{\Box,\psi|_{G_{F_{v}}}}$ for the quotient of $R^{\Box}_{v}$ corresponding to liftings with determinant $\psi|_{G_{F_{v}}}\varepsilon^{-1}.$ We let $R_{v}^{\Box,\psi|_{G_{F_{v}}},\tau_{v}}$ denote the reduced, $p$-torsion free quotient of $R_{v}^{\Box,\psi|_{G_{F_{v}}}}$ corresponding to potentially crystalline deformations of inertial type $\tau_{v}$ and Hodge-Tate weights $(0,1).$


We let $R^{\loc}$ denote $\widehat{\otimes}_{w\in S}R^{\Box}_w.$ We define $R^{\min,\tau_{v}}:=R_{v}^{\Box,\psi|_{G_{F_{v}}},\tau_{v}}\widehat{\otimes}(\widehat{\otimes}_{w\in S,w\neq v}R^{\min}_w),$ which is formally smooth over $R_{v}^{\Box,\psi|_{G_{F_{v}}},\tau_{v}}.$ Let $R^{\psi}_{\overline{r},S}$ (resp. $R^{\Box,\psi}_{\overline{r},S}$) be the universal (resp. framed) deformation ring for deformations of $\overline{r}$ which are unramified outside $S$ with determinant $\psi\varepsilon^{-1},$ so that $R^{\Box,\psi}_{\overline{r},S}$ is naturally an  $R^{\psi}_{\overline{r},S}$-algebra. We define $R^{\Box,\min}_{\overline{r},S}:=R^{\Box,\psi}_{\overline{r},S}\otimes_{R^{\loc}}R^{\min,\tau_{v}}.$ We also have the corresponding universal deformation ring $R^{\min}_{\overline{r},S}$ defined as the image of $R^{\psi}_{\overline{r},S}$ in $R^{\Box,\min}_{\overline{r},S}.$ The usual Taylor-Wiles-Kisin patching argument \cite[\S 6.4]{EGS}, \cite{GK} applied to the space $M^{\min}_{\overline{r}}(V(\tau_{v})^\circ)$ defined above in both definite and indefinite cases gives us the following data:

$\bullet$ positive integers $g,q,$

$\bullet$ $R_{\infty}^{\min,\tau_{v}},$ a power series ring in $g$ variables over $R^{\min,\tau_{v}},$

$\bullet$ an $\OC_E$-algebra homomorphism $\OC_E[[x_1,\ldots,x_{4\# S+q-1}]]\to R_{\infty}^{\min,\tau_{v}}$ such that
\[
R_{\infty}^{\min,\tau_{v}}/(x_1,\ldots,x_{4\# S+q-1})\cong R^{\min}_{\overline{r},S},
\]

$\bullet$ a coherent sheaf $M^{\min}_{\overline{r},\infty}(V(\tau_{v})^\circ)$ over $R_{\infty}^{\min,\tau_{v}}$ with an isomorphism
\[
M^{\min}_{\overline{r},\infty}(V(\tau_{v})^\circ)/(x_1,\ldots,x_{4\# S+q-1})\cong M^{\min}_{\overline{r}}(V(\tau_{v})^\circ).
\]

For any tame type $V,$ it is observed in \cite[Lem. 3.1.1]{EGS} that $V$ can be defined over an unramified extension of $\QM_p.$ By going through all the above constructions, we can and we do assume that $E$ is an unramified extension of $\QM_p$ in the following. We recall Theorem 10.1.1 of \cite{EGS}.

\begin{theorem}\label{thm-EGS}
Assume Assumptions I and II, and let $\tau_{v}$ be a non-scalar tame inertial type for $I_{F_{v}}.$ For any $\s\in \JH(\overline{V(\tau_{v})}),$ we have the (homothety class of) lattice $(V(\tau_{v})_\s)^\circ$ such that the cosocle of its reduction is $\sigma$ (see Prop. \ref{prop-tame-1} (ii)). Then the coherent sheaf $M_{\overline{r},\infty}^{\min}((V(\tau_{v})_\s)^\circ)$ is free of rank one over $R_{\infty}^{\min,\tau_{v}}.$
\end{theorem}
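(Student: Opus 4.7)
The plan is to combine the Taylor-Wiles-Kisin patching construction already made available before the statement with the Breuil-Mézard formalism and the explicit local description of the tame-type deformation ring. Write $M_\infty := M^{\min}_{\overline{r},\infty}((V(\tau_{v})_\sigma)^\circ)$. By patching, one knows a priori that $M_\infty$ is maximal Cohen-Macaulay over $R_\infty^{\min,\tau_{v}}$, supported on the whole spectrum. The goal of upgrading this to ``free of rank one'' will reduce, by a standard commutative-algebra argument (MCM plus matching fundamental cycle on a reduced CM base implies free of rank one), to computing the cycle $Z(M_\infty/\varpi_E M_\infty)$ and comparing it with the fundamental cycle $Z(R_\infty^{\min,\tau_{v}}/\varpi_E)$.

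First I would invoke the geometric analysis of the local potentially Barsotti-Tate deformation ring $R_{v}^{\Box,\psi|_{G_{F_{v}}},\tau_{v}}$ via Kisin modules with tame descent data: for non-scalar tame $\tau_{v}$, its mod-$\varpi_E$ special fiber is equidimensional and its irreducible components are indexed precisely by the weights in $\JH(\overline{V(\tau_{v})}) \cap \mathscr{D}(\overline{\rho}_{v})$, each appearing with multiplicity one in the fundamental cycle. Tensoring with the auxiliary smooth factors (the patching variables $x_1,\dots,x_{4\# S+q-1+g}$ and the formally smooth minimal-deformation factors at places $w \neq v$ in $S$), this description lifts to $\Spec R_\infty^{\min,\tau_{v}}/\varpi_E$, so the target cycle is completely explicit.

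The main computational step is then to show, for each $\tau \in \mathscr{D}(\overline{\rho}_{v}) \cap \JH(\overline{V(\tau_{v})})$, that the multiplicity of $M_\infty/\varpi_E M_\infty$ along the $\tau$-component equals the Jordan-Hölder multiplicity of $\tau$ in $\overline{(V(\tau_{v})_\sigma)^\circ}$. By Proposition \ref{prop-tame-1}(i) this multiplicity is exactly $1$. Hence $Z(M_\infty/\varpi_E M_\infty) = Z(R_\infty^{\min,\tau_{v}}/\varpi_E)$, and combined with MCM-ness this forces $M_\infty$ to be free of rank one (one can, for instance, use Auslander-Buchsbaum together with Nakayama to lift a single generator at the special fiber).

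The main obstacle is the Breuil-Mézard comparison itself: identifying the geometric multiplicity of the patched cycle along each component of $R_\infty^{\min,\tau_{v}}/\varpi_E$ with the representation-theoretic multiplicity of a Serre weight in the reduced tame-type lattice. This is the heart of the Emerton-Gee-Savitt approach and requires the full moduli-theoretic description of Kisin modules with descent data together with the compatibility of the patching functors $\s_v \mapsto M^{\min}_{\overline{r},\infty}(\s_v)$ with short exact sequences of $\GL_2(\OC_{F_{v}})$-stable lattices. Note that the argument is insensitive to which lattice $(V(\tau_{v})_\sigma)^\circ$ is chosen, since any two gauges have the same semisimplified reduction and hence the same cycle class; so the rank-one conclusion holds uniformly for all $\sigma \in \JH(\overline{V(\tau_{v})})$, even for those not in $\mathscr{D}(\overline{\rho}_{v})$.
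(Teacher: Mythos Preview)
The paper does not prove this theorem at all: it is stated verbatim as a recall of \cite[Thm.~10.1.1]{EGS} and is used as a black box. So there is no ``paper's own proof'' to compare against.

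Your sketch is a fair high-level outline of the Emerton--Gee--Savitt strategy (patching gives maximal Cohen--Macaulayness; a Breuil--M\'ezard cycle comparison on the mod-$\varpi_E$ fibre of the tame potentially Barsotti--Tate deformation ring then matches the module cycle with the ring cycle; MCM plus matching cycles forces free of rank one). Two remarks for accuracy. First, the actual argument in \cite{EGS} is substantially more delicate than a pure cycle count: the freeness for the \emph{cosocle} lattice is obtained via an inductive analysis of the lattice filtration and explicit identification of the ideals cutting out the weight components inside $\overline{R_{v}^{\Box,\psi,\tau_{v}}}$ (see \cite[\S7--8, \S10]{EGS}), not simply by invoking Auslander--Buchsbaum on an equal-cycle MCM module. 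Second, the step ``MCM over a reduced CM base with equal fundamental cycle implies free of rank one'' is not quite automatic in general; in \cite{EGS} this passes through the regularity of the generic points of the special fibre, so that the module is generically free of rank one and then free by MCM plus reflexivity/depth arguments. None of this affects the present paper, which simply imports the result.
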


\begin{corollary}\label{cor-final}
Under the above assumptions, we have
\[
\dim_{\FM}M^{\min}_{\overline{r}}(\overline{(V(\tau_{v})_\s)^\circ})^*[\mathfrak{m'}]\leq 1.
\]
Therefore, we have
\begin{equation}\label{eq-mult one}
\dim_{\FM}\Hom_{\GL_2(\OC_{F_{v}})}(\overline{(V(\tau_{v})_\s)^\circ},\pi^D_{v}(\overline{r}))\leq 1.
\end{equation}
\end{corollary}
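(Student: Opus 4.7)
The plan is to deduce Corollary \ref{cor-final} as a direct, formal consequence of Theorem \ref{thm-EGS} together with the identification \eqref{equation-isotypic}; all the deep input sits in the cited patching theorem. The strategy is to propagate the rank-one freeness down from the patched module to the fiber at $\mathfrak{m}'$ in three successive base-change steps: quotienting out the patching variables $(x_1,\ldots,x_{4\# S+q-1})$, reducing modulo $\varpi_E$, and finally quotienting by the maximal ideal.

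To carry this out, first, from Theorem \ref{thm-EGS} the module $M^{\min}_{\overline{r},\infty}((V(\tau_{v})_\sigma)^\circ)$ is free of rank one over $R_\infty^{\min,\tau_{v}}$. Base-changing along the surjection $R_\infty^{\min,\tau_{v}}\twoheadrightarrow R_\infty^{\min,\tau_{v}}/(x_1,\ldots,x_{4\# S+q-1})\cong R^{\min}_{\overline{r},S}$ and using the patching identification recalled in the fourth bullet just before Theorem \ref{thm-EGS}, one deduces that $M^{\min}_{\overline{r}}((V(\tau_{v})_\sigma)^\circ)$ is free of rank one over $R^{\min}_{\overline{r},S}$. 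Next, we compare this to the Serre-weight version: applying the functor $\sigma_{v}\mapsto S^{D,\min}(\sigma_{v})_{\mathfrak{m}'}$, which is exact in $\sigma_{v}$ (by divisibility of the target $E/\OC_E$ in the definite case, and analogously in the indefinite case), to the short exact sequence
\[0\to (V(\tau_{v})_\sigma)^\circ\xrightarrow{\varpi_E}(V(\tau_{v})_\sigma)^\circ\to \overline{(V(\tau_{v})_\sigma)^\circ}\to 0,\]
and then Pontryagin dualizing, yields an isomorphism $M^{\min}_{\overline{r}}(\overline{(V(\tau_{v})_\sigma)^\circ})\cong M^{\min}_{\overline{r}}((V(\tau_{v})_\sigma)^\circ)/\varpi_E$. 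In particular this module is free of rank one over $R^{\min}_{\overline{r},S}/\varpi_E$.

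Finally, since $R^{\min}_{\overline{r},S}$ is a complete local $\OC_E$-algebra with residue field $\FM$, the pullback of $\mathfrak{m}'$ along the surjection $R^{\min}_{\overline{r},S}\twoheadrightarrow \TM((V(\tau_{v})_\sigma)^\circ\otimes L)'$ must be the unique maximal ideal. Taking the Pontryagin dual and extracting the $\mathfrak{m}'$-torsion of the rank-one free module from the previous step then amounts to taking the $\FM$-linear dual of its fiber at this maximal ideal, which is one-dimensional; this proves the first inequality, and the second follows immediately via \eqref{equation-isotypic}. I do not foresee any serious obstacle, as the argument is essentially formal once Theorem \ref{thm-EGS} is in hand; the only mildly delicate point is checking the exactness of the coefficient functor $\sigma_{v}\mapsto S^{D,\min}(\sigma_{v})$ and its compatibility with localization at $\mathfrak{m}'$ uniformly in the definite and indefinite cases.
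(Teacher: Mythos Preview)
Your proposal is correct and follows essentially the same approach as the paper: both deduce the bound formally from the rank-one freeness in Theorem \ref{thm-EGS} by passing to the fiber at the maximal ideal, and both invoke \eqref{equation-isotypic} for the second inequality. The paper is marginally more direct, working entirely at the patched level (quotienting $M^{\min}_{\overline{r},\infty}((V_\sigma)^\circ)$ by the maximal ideal of $R_\infty^{\min,\tau_v}$ in one step and identifying the result with the dual of $M^{\min}_{\overline{r}}(\overline{(V_\sigma)^\circ})^*[\mathfrak{m}']$), whereas you unwind the same computation through the intermediate ring $R^{\min}_{\overline{r},S}$; but this is a difference of presentation, not of substance.
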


\begin{proof} The second inequality follows from the first one by (\ref{equation-isotypic}). Hence, it suffices to prove the first inequality. To simplify the notation, we denote $V(\tau_{v})$ by $V$ in the proof.

Since $M^{\min}_{\overline{r},\infty}$ is a minimal fixed determinant patching functor (see \cite[\S 6]{EGS}), from Theorem \ref{thm-EGS}, we have that $M^{\min}_{\overline{r},\infty}((V_\s)^\circ)$ is a cyclic $R_{\infty}^{\min,\tau_{v}}$-module. Then the $\FM\cong R_{\infty}^{\min,\tau_{v}}/\mathfrak{m}_{R_{\infty}^{\min,\tau_{v}}}$-module
\[
M^{\min}_{\overline{r},\infty}((V_\s)^\circ)/\mathfrak{m}_{R_{\infty}^{\min,\tau_{v}}}M^{\min}_{\overline{r},\infty}((V_\s)^\circ)
\]
is a cyclic $\FM$-module. Therefore,
\[
\dim_{\FM}M^{\min}_{\overline{r},\infty}((V_\s)^\circ)/\mathfrak{m}_{R_{\infty}^{\min,\tau_{v}}}M^{\min}_{\overline{r},\infty}((V_\s)^\circ)\leq 1.
\]
As a dual $\FM$-module of $M^{\min}_{\overline{r},\infty}((V_\s)^\circ)/\mathfrak{m}_{R_{\infty}^{\min,\tau_{v}}}M^{\min}_{\overline{r},\infty}((V_\s)^\circ),$  $M^{\min}_{\overline{r}}(\overline{(V_\s)^\circ})^*[\mathfrak{m'}]$ is also of dimension less or equal to $1.$
\end{proof}

\begin{corollary}
Maintain the same assumptions on $\overline{r}$ as above, and assume further that $\overline{\rho}_{v}$ is tame (i.e. either split or irreducible). Then $\pi^D_{v}(\overline{r})^{K_1}\cong D_0(\overline{\rho}_{v}),$ where $K_1=1+pM_2(\OC_{F_{v}}),$ and $D_0(\overline{\rho}_{v})$ is the representation of $\G:=\GL_2(k_{v})$ constructed by Breuil and Pa\v sk\= unas and recalled in \S 2.6.
\end{corollary}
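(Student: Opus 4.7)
The strategy is to apply Corollary \ref{cor-local result} to $W := \pi^D_v(\overline{r})^{K_1}$, regarded as a $\Gamma$-representation via $\GL_2(\OC_{F_v})/K_1 \cong \Gamma$. Three inputs are required: (a) the inclusion $D_0(\overline{\rho}_v) \hookrightarrow W$; (b) an embedding $W \hookrightarrow \bigoplus_{\sigma \in \mathscr{D}(\overline{\rho}_v)} R_\sigma$; (c) the dimension estimate $\dim_{\FM}\Hom_\Gamma\bigl(\overline{(V(\tau_v)_\tau)^\circ}, W\bigr) = 1$ for every $\tau \in \mathscr{D}(\overline{\rho}_v)$, for a suitably chosen tame type.

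For (a), I would invoke the Breuil--Pa\v sk\=unas conjecture as established in \cite{EGS}: inside $\pi^D_v(\overline{r})$ sits a sub-representation $\pi$ belonging to the family constructed in \cite{BP}, characterised on the $\GL_2(\OC_{F_v})$-level by $\pi^{K_1}\cong D_0(\overline{\rho}_v)$ as a $\Gamma$-representation. Taking $K_1$-invariants yields $D_0(\overline{\rho}_v)\hookrightarrow W$. For (b), admissibility of $\pi^D_v(\overline{r})$ makes $W$ finite dimensional. The weight part of \cite{BDJ} (proved in \cite{GLS}) combined with the socle multiplicity-one statement of \cite{EGS} (Remark after Conjecture \ref{conj-BDJ 2}) identifies $\rsoc_\Gamma W$ with $\bigoplus_{\sigma \in \mathscr{D}(\overline{\rho}_v)}\sigma$, each $\sigma$ appearing exactly once. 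Since $\overline{\rho}_v$ is generic and tame, every $\sigma\in\mathscr{D}(\overline{\rho}_v)$ is regular and of dimension at least two, so $R_\sigma$ coincides with $\mathrm{inj}_\Gamma(\sigma)$; embedding $W$ into the injective envelope of its socle gives (b).

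For (c), I would pick, via Proposition \ref{prop-tame-2}, a tame type $V(\tau_v)$ such that $\JH(\overline{V(\tau_v)}) = \mathscr{D}(\overline{\rho}_v^{\rm ss}) = \mathscr{D}(\overline{\rho}_v)$, using that $\overline{\rho}_v$ is tame. For each $\tau\in\mathscr{D}(\overline{\rho}_v)$, Proposition \ref{prop-tame-1}(ii) yields the lattice $(V(\tau_v)_\tau)^\circ$ whose reduction has cosocle $\tau$; this reduction is inflated from a $\Gamma$-representation, hence every morphism into $\pi^D_v(\overline{r})$ lands in $W$. Thus
\[
\Hom_\Gamma\bigl(\overline{(V(\tau_v)_\tau)^\circ}, W\bigr) = \Hom_{\GL_2(\OC_{F_v})}\bigl(\overline{(V(\tau_v)_\tau)^\circ}, \pi^D_v(\overline{r})\bigr),
\]
which is of dimension at most one by Corollary \ref{cor-final}. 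The non-vanishing follows from composing the cosocle quotient $\overline{(V(\tau_v)_\tau)^\circ}\twoheadrightarrow\tau$ with the inclusion $\tau\hookrightarrow\rsoc_\Gamma W\subseteq W$. Corollary \ref{cor-local result} then closes the argument.

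The technical heart of the proof, and the main obstacle in principle, is the bound in (c): both its upper part, coming from the Taylor--Wiles--Kisin patching machinery culminating in Theorem \ref{thm-EGS}, and its local counterpart Corollary \ref{cor-local result}, crucially exploit the tameness assumption on $\overline{\rho}_v$. Specifically, the alternativity in Lemma \ref{lemma-Ext-sigma}(i), which underpins Corollary \ref{cor-local result}, fails in the reducible non-split case (Remark \ref{rem-nonsplit}), so a different local argument would be needed there.
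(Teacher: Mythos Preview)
Your proof is correct and follows essentially the same route as the paper's: verify the three hypotheses of Corollary~\ref{cor-local result} for $W=\pi^D_v(\overline{r})^{K_1}$ using the socle description from the weight part of Serre's conjecture and the multiplicity-one result of \cite{EGS}, the choice of tame type from Proposition~\ref{prop-tame-2}, and the dimension bound from Corollary~\ref{cor-final}. The only cosmetic difference is in step~(a): the paper obtains the embedding $D_0(\overline{\rho}_v)\hookrightarrow W$ by citing \cite[Prop.~9.3]{Br14} together with Remark~\ref{remark1}, whereas you appeal to the Breuil--Pa\v{s}k\=unas conjecture proved in \cite{EGS}; both arguments yield the same inclusion.
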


\begin{proof}
We use freely the notations in \S 2. It suffices to show that $\pi^D_{v}(\overline{r})^{K_1}$ satisfies the hypothesis in Corollary \ref{cor-local result}. From the weight part of Conjectures \ref{conj-BDJ 1} and \ref{conj-BDJ 2}, we have
\[
\soc_{\G}(\pi^D_{v}(\overline{r})^{K_1})=\soc_{\GL_2(\OC_{F_{v}})}\pi^D_{v}(\overline{r})=\oplus_{\s\in\mathscr{D}(\overline{\rho}_{v})}\s.
\]
Hence $\pi^D_{v}(\overline{r})^{K_1}$ embeds into $\oplus_{\s\in\mathscr{D}(\overline{\rho}_{v})}R_\s,$ where   $R_\s$ is the injective envelope of $\s$ in $\Rep_\G$ (since $\dim\sigma\geq 2$), see  the beginning of \S 2. By \cite[Prop. 9.3]{Br14} and Remark \ref{remark1}, we have a $\G$-equivariant injection $D_0(\overline{\rho}_{v})\into\pi^D_{v}(\overline{r})^{K_1}.$ Since by assumption $\brho_{v}$ is tame, we can choose a tame type $V$ with $\JH(\overline{V})=\mathscr{D}(\overline{\rho}_{v})$ by Proposition \ref{prop-tame-2}. For any $\s\in\mathscr{D}(\overline{\rho}_{v}),$ let $V_\s^\circ$ denote the unique $\Gamma$-stable $\cO_E$-lattice of $V$ whose reduction has cosocle $\s.$ Then we have a natural $\Gamma$-equivariant morphism
\[
V_\s^\circ/\varpi_E V_\s^\circ\twoheadrightarrow \s\into\pi^D_{v}(\overline{r})^{K_1},
\]
and hence by (\ref{eq-mult one}), we have
\[
\dim_{\FM}\Hom_{\G}(V_\s^\circ/\varpi_E V_\s^\circ,\pi^D_{v}(\overline{r})^{K_1})= 1.
\]
Finally, Corollary \ref{cor-local result} allows to conclude.
\end{proof}

\textbf{Acknowledgements}
This work was partially supported by China's Recruitement Program of Global Experts and National Center for  Mathematics and Inter disciplinary Sciences, Chinese Academy of Sciences. Part of this work was done during a visit of the second author to University of Rennes 1, Morningside Center of Mathematics and Yau Mathematical Sciences Center at Tsinghua University. He wants to thank these institutions for their hospitality, and he thanks G. Pappas for useful discussions. Finally, we want to thank the referee for his/her very careful reading and various suggestions.

  \[\underline{\hspace{6cm}}\]

\bigskip

\noindent Morningside Center of Mathematics, Academy of Mathematics and Systems Science,
 Chinese Academy of Sciences, University of the Chinese Academy of Sciences
Beijing, 100190,
China.\\
{\it E-mail:} {\ttfamily yhu@amss.ac.cn}\\

\noindent

\noindent  Department of Mathematics, Michigan State University, East Lansing, MI 48824, USA\\
{\it E-mail:} {\ttfamily wanghaoran@math.msu.edu}\\

 \end{document}